\newenvironment{customlegend}[1][]{%
    \begingroup
    \csname pgfplots@init@cleared@structures\endcsname
    \pgfplotsset{#1}%
}{%
    \csname pgfplots@createlegend\endcsname
    \endgroup
}%
\def\addlegendimage{\csname pgfplots@addlegendimage\endcsname}
\newtheorem{theorem}{Theorem}[section] 
\newtheorem{lemma}[theorem]{Lemma}
\newtheorem{proposition}[theorem]{Proposition}
\newtheorem{corollary}[theorem]{Corollary}
\newtheorem{example}{Example}[section]
\newenvironment{proof}[1][Proof]{\begin{trivlist}
\item[\hskip \labelsep {\bfseries #1}]}{\end{trivlist}}
\newenvironment{remark}[1][Remark]{\begin{trivlist}
\item[\hskip \labelsep {\bfseries #1}]}{\end{trivlist}}
\newcommand{\qed}{\nobreak \ifvmode \relax \else
      \ifdim\lastskip<1.5em \hskip-\lastskip
      \hskip1.5em plus0em minus0.5em \fi \nobreak
      \vrule height0.75em width0.5em depth0.25em\fi}
\newcommand{\argmin}{\operatornamewithlimits{argmin}}
\renewcommand{\arraystretch}{2}
\numberwithin{equation}{section}
\begin{document}

\title{Scheduling for a Processor Sharing System\\ 
with Linear Slowdown\footnote{To appear in Mathematical Methods of Operations Research}}
\author{Liron Ravner\footnote{Department of Statistics and the Federmann Center for the Study of Rationality, The Hebrew University of Jerusalem, Israel} ~and Yoni Nazarathy\footnote{School of Mathematics and Physics, The University of Queensland, Brisbane, Australia}}

\date{\today}
\maketitle

\begin{abstract}
We consider the problem of scheduling arrivals to a congestion system with a finite number of users having identical deterministic demand sizes. The congestion is of the processor sharing type in the sense that all users in the system at any given time are served simultaneously. However, in contrast to classical processor sharing congestion models, the processing slowdown is proportional to the number of users in the system at any time. That is, the rate of service experienced by all users is linearly decreasing with the number of users. For each user there is an ideal departure time (due date). A centralized scheduling goal is then to select arrival times so as to minimize the total penalty due to deviations from ideal times weighted with sojourn times. Each deviation is assumed quadratic, or more generally convex. But due to the dynamics of the system, the scheduling objective function is non-convex. Specifically, the system objective function is a non-smooth piecewise convex function. Nevertheless, we are able to leverage the structure of the problem to derive an algorithm that finds the global optimum in a (large but) finite number of steps, each involving the solution of a constrained convex program. Further, we put forward several heuristics. The first is the traversal of neighbouring constrained convex programming problems, that is guaranteed to reach a local minimum of the centralized problem. This is a form of a ``local search'', where we use the problem structure in a novel manner. The second is a one-coordinate ``global search'', used in coordinate pivot iteration. We then merge these two heuristics into a unified ``local-global'' heuristic, and numerically illustrate the effectiveness of this heuristic.
\end{abstract}

\section{Introduction}\label{sec:Intro}

Users of shared resources are frequently faced with the decision of when to use the resource with a view of trying to avoid rush hour effects. Broad examples include, workers taking their lunch break and attending a cafeteria; people entering and vacating sporting events; and commuters using transportation networks. In many such situations the so called rush-hour game is played by all users acting individually. On the one hand, each user typically has an ideal arrival/departure time, while on the other hand, users often wish to avoid rush hour so as to minimise congestion costs. These general types of scenarios have received much attention through the transportation community, \cite{ADL1993}, the queueing community (see \cite{GH1983} or p84 of \cite{book_H2016} for a review) and more specifically within the setting we consider in this paper \cite{RHV2016}.

While understanding social strategic (game) behaviour is important, a complementary analysis is with regards to the social optimum (centralised scheduling decisions). These types of situations occur often in manufacturing, appointment scheduling, education and service. Most of the research on scheduling methodology does not consider processor sharing but rather focuses on the situation where resources are dedicated, see \cite{book_P2008}. In this paper, we put forward a novel scheduling model, that offers a simple abstraction of a common scenario: Jobs may be scheduled simultaneously, yet slow each other down when sharing the resource. In this respect our model is related to the study of scheduling problems with batch processing, see \cite{PM2000}. However, from a mathematical perspective, our model, results and methods do not involve the classical discrete approaches but rather rely on piecewise affine dynamics with breakpoints. This type of behaviour resembles Separated Continuous Linear Programs, as in \cite{weiss2008simplex}, and is often used to solve optimization problems associated with fluid multi-class queueing networks (cf. \cite{avram1995fluid}, \cite{nazarathy2009near}). 

A standard way of modelling resource sharing phenomena, is the so-called processor sharing queue, see for example \cite{harchol2013performance}. In such a model, given that at time $t$ there are $q(t)$ users in the system, the total fixed service capacity, $\beta>0$, is allocated, such that each user receives an instantaneous {\em service rate},
\begin{equation}
\label{eq:56}
v\big(q(t) \big) = \frac{\beta}{q(t)}.
\end{equation}
Such a model then captures the relationship of the arrival time of a user, $a$, the departure time of a user, $d$ and the service demand, $\ell$ through
\[
\ell = \int_{a}^{d} v\big(q(t)\big) dt.
\]
The {\em aggregate throughput} with $q$ users in the system is the product $q \, v(q)$. For the processor sharing model \eqref{eq:56}, this is obviously the constant $\beta$. However, in practice, the aggregate throughput is not necessarily constant with respect to $q(t)$. In many situations, most notably in traffic and transportation scenarios, users inter-play in a complicated manner. In particular, in the classic Greenshield fluid model, (see for example \cite{H1974} or \cite{MAHMASSANI1984}) the aggregate throughput is not monotone in the number of users and even exhibits a traffic jam effect. The simplest model, describing such a phenomenon~is
\begin{equation}
\label{eq:ourSpeed}
v\big(q(t)\big) = \beta - \alpha \big(q(t)-1\big),
\end{equation}
which is a discrete variation of Greenshield's model\footnote{Note that in queueing theory, situations where $v(\cdot)$ is not as in \eqref{eq:56} but is rather some other function are sometimes referred to as generalized processor sharing. See for example~\cite{cohen1979multiple}. Generalized processor sharing has also taken other meanings over the years, so sometimes there is confusion about the term.}. With a single user in the system, \eqref{eq:ourSpeed} yields the {\em free flow rate} $\beta$ which coincides with \eqref{eq:56}. Then for each additional user, there is a linear slowdown of $\alpha>0$ units in the rate. See Figure~\ref{fig:service_dynamics} for a simple illustration. Note that in road networks, much research has focused on the so-called fundamental diagram for networks, such as in \cite{Daganzo2007}. Indeed Figure~\ref{fig:service_dynamics}-b resembles a fundamental diagram. 
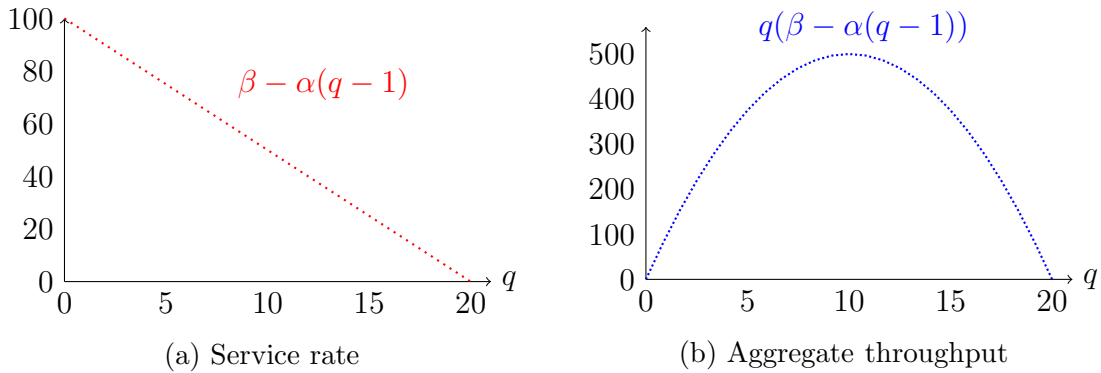
\begin{figure}
\begin{subfigure}{0.49\linewidth}
\centering
\begin{tikzpicture}[xscale=0.27,yscale=0.035]
  \def\xmin{0}
  \def\xmax{21}
  \def\ymin{0}
  \def\ymax{100}
  
    \draw[->] (\xmin,\ymin) -- (\xmax,\ymin) node[right] {$q$} ;
    \draw[->] (\xmin,\ymin) -- (\xmin,\ymax);
    \foreach \x in {0,5,10,15,20}
    \node at (\x,\ymin) [below] {\x};
    \foreach \y in {0,20,40,60,80,100}
    \node at (\xmin,\y) [left] {\y};
  
  \draw[red, dotted, thick,domain=0:20]  plot (\x, {100-5*\x});
  \draw[] (8,75) node[right,red] {$\beta-\alpha(q-1)$};  
\end{tikzpicture}
\caption{Service rate}
\end{subfigure}
\begin{subfigure}{0.49\linewidth}
\centering
\begin{tikzpicture}[xscale=0.27,yscale=0.006]
  \def\xmin{0}
  \def\xmax{21}
  \def\ymin{0}
  \def\ymax{560}
  
    \draw[->] (\xmin,\ymin) -- (\xmax,\ymin) node[right] {$q$} ;
    \draw[->] (\xmin,\ymin) -- (\xmin,\ymax);
        \foreach \x in {0,5,10,15,20}
    \node at (\x,\ymin) [below] {\x};
    \foreach \y in {0,100,200,300,400,500}
    \node at (\xmin,\y) [left] {\y};
  
  \draw[blue, densely dotted, thick,domain=0:20]  plot (\x, {\x*(100-5*\x)});
  \draw[] (5,560) node[right,blue] {$q(\beta-\alpha(q-1))$};
  
\end{tikzpicture}
\caption{Aggregate throughput}
\end{subfigure}
\caption{The service rate and aggregate throughput as a function of the number of users in the system. Parameter values: $\beta=100$ and $\alpha=5$}
\label{fig:service_dynamics}
\end{figure}

Our scheduling problem is to centrally choose arrival times ${\mathbf a}=(a_1,\ldots,a_N)'$ in an effective manner, where $N$ is the number of users. In this paper we assume that all users share the same service demand, $\ell$. In our objective, user $i$ incurs a cost of
\[
(d_i-d_i^*)^2 \,+ \gamma \,  (d_i - a_i),
\]
where $d_i$ is his departure time and $d_i^*$ is the ideal departure time (due date) and $\gamma $ captures tradeoff between meeting the due date and sojourn time costs. The total costs incurred by all users is then the sum of individual user costs.

If there was no congestion (say due to $d_i^*$ being well separated), an ideal choice is $a_i = d_i^* - \ell/\beta$. But in general, users interact, so the scheduling decision needs to take this interaction into account. If, for example, $\gamma=0$ and $d_i^*=d^*$ for all $i$, then the problem is trivially solved with zero cost by setting
\[
a_i = d^*- \frac{\ell}{\beta- \alpha(N-1)}.
\]
Here since sojourn time does not play a role, sending all users simultaneously will imply they arrive simultaneously after being served together at the slowest possible rate.  Continuing with the case of $\gamma=0$, if now users do not have the same $d_i^*$, then attaining zero costs is still possible. In fact, we show in the sequel, that in this specific case ($\gamma=0$) the optimal schedule can be computed efficiently (in polynomial time).  

At the other extreme consider the case where minimising sojourn times is prioritised over minimisation of due dates (e.g. if fuel costs are extremely high). This corresponds to $\gamma \approx \infty$. While for any finite $\gamma$, it is possible that an optimal schedule allows overlap of users, an approximation for the case of large $\gamma$ is obtained by enforcing a schedule with no overlap ($q(t) \le 1 ~ \forall t$). This is because overlaps have a very large sojourn time cost relative to the possible reduction in quadratic deviation from desired departure times. Now with such a constraint, the problem resembles a single machine scheduling problem with due date penalties. This problem has been heavily studied (see for example \cite{baker1990sequencing} or \cite{sen1984state}).  In our case, in which users have identical demand, finding the optimal schedule is a convex quadratic program and can thus be solved in polynomial time. We spell out the details in the sequel.

Setting aside the extreme cases of $\gamma=0$ or $\gamma \approx \infty$, the problem is more complicated. While we do not have an NP-hardness proof, we conjecture that finding the optimal ${\mathbf a}$ is a computationally challenging problem. In the current paper we handle this problem in several ways. First we show that departure times depend on arrival times in a piecewise affine manner. We find an efficient algorithm for calculating $d_i({\mathbf a})$. We then show that the total cost is a piecewise convex quadratic function but generally not convex, i.e. there is a large (but finite) number of polytopes in ${\mathbbm R}^N$ where within each polytope, it is a convex quadratic function of ${\mathbf a}$. This is a similar formulation to that of the piecewise-linear programming problem presented in \cite{VAN2010}, which is known to be NP-hard. The structure of the total cost yields an {\em exhaustive search} scheduling algorithm which terminates in finite time. 

We then put forward heuristics. The first heuristic, which we refer to as the {\em local search}, operates by solving a sequence of neighbouring quadratic problems until finding a local minimum with respect to the global optimization. The second heuristic performs a {\em global search} over one coordinate (arrival time of a single user), keeping other coordinates fixed. This is done in a provably efficient manner. In particular, we bound the number of steps in each coordinate search by a polynomial. It then repeats over other coordinates, cycling over all coordinates until no effective improvement in the objective function is possible. In case of smooth objectives, it is known that such Coordinate Pivot Iterations (CPI) schemes converge to local minima (see for example \cite{Bertsekas1999}, p272). Further, in certain special cases of non-smooth objectives, it is also known that CPI schemes converge to local minima (see for example \cite{Tseng2001}). But in our case, the non-separable piecewise structure of the objective often causes our heuristic to halt at a point that is not a local minimum. Nevertheless, the global search heuristic is fruitful when utilized in a {\em combined local-global search} heuristic.  This heuristic performs global searches with different initial points, each followed by a local search. We present numerical evidence, illustrating that it performs extremely well. Often finding the global optimum in very few steps.

The structure of the sequel is as follows. In Section~\ref{sec:model} we present the model and basic properties. In Section~\ref{sec:dynamics} we focus on arrival departure dynamics, showing a piecewise affine relationship between the arrival and departures times.  We give an efficient algorithm for calculating the departure times given arrival times or vice-versa. This also solves the scheduling problem for the special case $\gamma=0$. In Section~\ref{sec:optRegion} we characterise the constraints associated with quadratic programs which make up the piecewise quadratic cost. These are then used in the {\em exhaustive search} algorithm. We then present the {\em local search} algorithm and prove it always terminates at a local minimum (of the global objective). In Section~\ref{sec:CPIsection} we present our global search method based on CPI. We utilize the structure of the problem to obtain an efficient single coordinate search within the CPI. Then in Section~\ref{sec:combHeurist}, the local search and global searches are combined into a unified heuristic. We further illustrate the power of our heuristic through numerical examples. We conclude in Section~\ref{sec:Conclusion}. Some of the proofs are deferred to the appendix.

{\bf Notation:} We denote $x \wedge y$ and $x \vee y$ to be the minimum and maximum of $x$ and $y$, respectively. We define any summation with initial index larger than the final index to equal zero (e.g. $\sum_{i=2}^1 a_i=0$). Vectors are taken as columns and are denoted in bold.  ${\mathbf 1}\in\mathbbm{R}^N$ denotes a vector of $1$'s and $\mathbf{e_i}\in\mathbbm{R}^N$ denotes a vector of zeros in all but the $i$'th coordinate, which equals $1$. The indicator function is denoted by ${\mathbbm 1}$.

\section{Model}
\label{sec:model}

Our model assumes that there is a fixed user set $\mathcal{N}=\{1,\dots,N\}$ where the service requirement of each user, $\ell$, is the same and is set to $1$ without loss of generality (this can be accounted for by changing the units of $\beta$ and $\alpha$). Then the equations determining the relationship between the arrival times vector ${\mathbf a}=(a_1,\ldots,a_N)'$ and the departure times vector ${\mathbf d}=(d_1,\ldots,d_N)'$ are
\begin{equation}
\label{eq:dynamics}
1 = \int_{a_i}^{d_i} v\big(q(t)\big) dt,
\qquad
\mbox{where}
\qquad
q(t) = \sum_{j\in\mathcal{N}} {\mathbbm 1}\{t \in [a_j,d_j]\}.
\end{equation}
Using the linear slowdown rate function, \eqref{eq:ourSpeed}, the equations are represented as,
\begin{equation}
\label{eq:integratedForm}
1 = \int_{a_i}^{d_i} \beta  - \alpha \Big(\sum_{j\in\mathcal{N}}  {\mathbbm 1}\{t \in [a_j,d_j]\} -1 \Big) dt,
\qquad
i=1,\ldots,N.
\end{equation}
These $N$ equations can be treated as equations for the unknowns ${\mathbf d}$, given ${\mathbf a}$ or vice-versa.  We assume $N < {\beta}/{\alpha} +1$ so that it always holds that $v\big(q(t)\big) > 0$.

The cost incurred by user $i$ is,
\begin{equation}\label{eq:costUserShort}
c_i(a_i,d_i)= (d_i-d_i^*)^2 \,+ \gamma \,  (d_i - a_i),
\end{equation}
and the total cost function, which we seek to minimise, is
\begin{equation}
\label{eq:totalCost}
c( {\mathbf a}) 
= \sum_{i\in\mathcal{N}} c_i \big(a_i,d_i({\mathbf a}) \big).
\end{equation}
We assume (without loss of generality) that the ideal departure times, ${\mathbf d}^* = (d_1^*,\ldots,d_N^*)'$ are ordered, i.e. $d_1^*\le \ldots \le d_N^*$. 

\begin{remark}
For clarity of the exposition we choose the cost, \eqref{eq:costUserShort} to be as simplistic as possible. Practical straightforward generalizations to the cost and to the associated algorithms and heuristics are discussed in the conclusion of the paper. These include other convex penalty functions, ideal arrival times and a potentially different penalty for early and late departures. Our  algorithms, can all be adapted for such cost functions.
\end{remark}

We first have the following elementary lemmas:

\begin{lemma}\label{Lemma:departure_order}
Assume that the arrivals, ${\mathbf a}$, are ordered: $a_1 \le a_2 \le \ldots \le a_N$, then the departures, ${\mathbf d}$, follow the same order: $d_1 \le d_2 \le \ldots \le d_N$.
\end{lemma}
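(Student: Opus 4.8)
The plan is to exploit the defining feature of processor sharing: at every instant $t$ all users present in the system experience the \emph{same} instantaneous service rate $v(q(t))$. This reduces the claim to a monotonicity property of a single scalar function of time. Concretely, I would introduce the \emph{service clock}
\[
F(t) = \int_{a_1}^{t} v\big(q(s)\big)\, ds ,
\]
which measures the cumulative service any user continuously present on $[a_1,t]$ would have accrued. The entire argument then rests on two properties of $F$.

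First, I would argue that $F$ is continuous and strictly increasing. Continuity is immediate, since $F$ is the integral of a bounded integrand. Strict monotonicity follows from the standing assumption $N < \beta/\alpha + 1$: because $0 \le q(t) \le N$ at all times and $v$ is decreasing in $q$, the linear slowdown rate obeys $v(q(t)) = \beta - \alpha(q(t)-1) \ge \beta - \alpha(N-1) > 0$, so the integrand is bounded below by a positive constant. Second, I would rephrase the defining equation \eqref{eq:integratedForm} in terms of $F$: since each user requires exactly one unit of service, the dynamics say precisely that $F(d_i) - F(a_i) = 1$, i.e. $F(d_i) = F(a_i) + 1$, for every $i \in \mathcal{N}$.

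With these two facts the lemma is immediate. If $a_i \le a_j$, then monotonicity of $F$ gives $F(a_i) \le F(a_j)$, whence $F(d_i) = F(a_i) + 1 \le F(a_j) + 1 = F(d_j)$; applying the strict monotonicity (hence injectivity) of $F$ once more yields $d_i \le d_j$. Letting $i,j$ run through consecutive indices produces the full chain $d_1 \le \cdots \le d_N$.

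The only delicate point, and the step I would treat most carefully, is the well-definedness of $F$, which presupposes that the occupancy $q(\cdot)$, equivalently the departure vector $\mathbf{d}$, is itself determined by $\mathbf{a}$. I would therefore phrase the argument for \emph{any} admissible solution of \eqref{eq:integratedForm}: given such a $\mathbf{d}$, the induced $q$ and hence $F$ satisfy the two displayed properties, and the ordering conclusion follows, deferring existence/uniqueness of the solution to the dynamics analysis that follows. A self-contained alternative that avoids $F$ is a direct exchange argument: assuming $a_i \le a_j$ but $d_i > d_j$, one compares the service accumulated by the two users over their common presence interval, on which they receive identical rates, and derives a contradiction with both needing exactly one unit. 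I find the service-clock formulation cleaner, as it isolates the single monotone object that does all the work.
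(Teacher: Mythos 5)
Your proof is correct and rests on exactly the same observation as the paper's: every user present in the system accrues service at the identical instantaneous rate, so the one unit of demand pins $d_i$ to $a_i$ through a common, strictly increasing service integral (strict by the standing assumption $N<\beta/\alpha+1$). The paper's own proof is precisely the ``direct exchange argument'' you mention as an alternative — splitting $\int_{a_i}^{d_i}v(q(t))\,dt$ at $a_j$ and $d_i\wedge d_j$ and deriving a contradiction — and your service-clock function $F$ is an equivalent, slightly cleaner repackaging of that same comparison.
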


\begin{lemma}
\label{lemma:unique}
For any ${\mathbf a}$ there is a unique ${\mathbf d}$ and vice-versa. 
\end{lemma}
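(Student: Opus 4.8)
The plan is to prove both directions by exploiting the ordering supplied by Lemma~\ref{Lemma:departure_order} together with the standing assumption $N < \beta/\alpha + 1$, which guarantees $v(q(t)) \ge \beta - \alpha(N-1) > 0$ for every admissible configuration. Positivity of the rate is the engine of the whole argument: it makes the accumulated work $\int_{a_i}^{t} v(q(s))\,ds$ a strictly increasing and continuous function of its upper limit, so each user's work equation can have at most one root once the surrounding occupancy is fixed, and the work grows without bound, which will give existence.

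For the forward direction (a unique $\mathbf{d}$ given $\mathbf{a}$), I would assume without loss of generality that $a_1 \le \cdots \le a_N$, so that by Lemma~\ref{Lemma:departure_order} any solution satisfies $d_1 \le \cdots \le d_N$, and then construct the departures sequentially in time. The key observation is that, because departures are ordered, the occupancy $q(t)$ on the interval $[a_k,d_k]$ is completely determined by the arrival vector $\mathbf{a}$ and the already-computed earlier departures $d_1,\dots,d_{k-1}$: a later user $j>k$ has $d_j \ge d_k \ge t$ throughout this interval, hence is present precisely when $a_j \le t$, a condition involving no unknown departure time. Given this, the right-hand side of user $k$'s equation in \eqref{eq:integratedForm}, viewed as a function of $d_k$, is continuous and strictly increasing; it takes a value $\le 1$ at $d_k = d_{k-1}$ (shrinking the integration interval from $[a_{k-1},d_{k-1}]$ to $[a_k,d_{k-1}]$ can only decrease the integral, which equals $1$ for user $k-1$ on the overlapping occupancy), and it tends to $+\infty$ as $d_k \to \infty$ since the integrand is bounded below by $\beta-\alpha(N-1)>0$. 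Hence there is exactly one admissible $d_k$, and it automatically satisfies $d_k \ge d_{k-1}$, which closes the induction and yields both existence and uniqueness of $\mathbf{d}$.

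For the reverse direction (a unique $\mathbf{a}$ given $\mathbf{d}$), I would invoke a time-reversal symmetry rather than repeat the computation. Setting $\tilde t = -t$, $\tilde a_i = -d_i$, and $\tilde d_i = -a_i$, a change of variables shows that $\int_{a_i}^{d_i} v(q(t))\,dt = \int_{\tilde a_i}^{\tilde d_i} v(\tilde q(\tilde t))\,d\tilde t$, where $\tilde q$ is the occupancy of the reflected intervals $[\tilde a_j,\tilde d_j]=[-d_j,-a_j]$. Thus recovering $\mathbf{a}$ from a prescribed $\mathbf{d}$ is exactly the forward problem of recovering departures $\tilde{\mathbf{d}}$ from arrivals $\tilde{\mathbf{a}}$ for the reflected data, and the forward result just established gives existence and uniqueness at once.

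I expect the main obstacle to be the decoupling step in the forward direction: making rigorous that $q(t)$ on $[a_k,d_k]$ can be evaluated without knowing the future departures $d_{k+1},\dots,d_N$. This is precisely where Lemma~\ref{Lemma:departure_order} is indispensable, since without the guarantee $d_j \ge d_k$ for $j>k$ the occupancy would depend circularly on the unknowns and the clean ``solve one coordinate at a time'' structure would collapse. A secondary point requiring care is verifying that each constructed $d_k$ respects the ordering, so that the configuration used at later steps is self-consistent; the boundary estimate $\le 1$ at $d_k=d_{k-1}$ is what supplies this.
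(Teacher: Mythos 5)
Your argument is correct, but it takes a genuinely different route from the paper's. The paper proves only uniqueness, by contradiction: it takes two putative solutions $\mathbf d\neq\tilde{\mathbf d}$, picks the minimal index $i$ with $d_i\neq\tilde d_i$ (say $d_i<\tilde d_i$), observes that the two occupancy processes agree for $t\le d_i$ because both departure vectors are ordered by Lemma~\ref{Lemma:departure_order} and all earlier departures coincide, and concludes that $\int_{d_i}^{\tilde d_i}v(\tilde q(t))\,dt=0$, contradicting $v>0$. That is shorter than your construction, but it tacitly presupposes existence (which the paper only supplies later, constructively, via Algorithm~1a and Proposition~\ref{prop:alg1_complexity}) and it does not explicitly address the ``vice-versa'' direction. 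Your sequential construction buys both of these: the monotone, continuous work functional $F_k(d)=\int_{a_k}^{d}v(q(t))\,dt$ with the boundary estimate $F_k(d_{k-1})\le 1$ and the divergence $F_k(d)\to\infty$ (from $v\ge\beta-\alpha(N-1)>0$) yields existence and uniqueness of each $d_k$ in one stroke, and the decoupling observation --- that on $[a_k,d_k]$ a later user $j>k$ contributes to $q(t)$ exactly when $a_j\le t$, with no reference to the unknown $d_j$ --- is precisely what makes the induction well founded; you are right that the self-consistency check $d_k\ge d_{k-1}$ is the point needing care, and your comparison with user $k-1$'s integral supplies it. Your time-reversal map $(\tilde a_i,\tilde d_i)=(-d_i,-a_i)$ for the reverse direction is a clean symmetry the paper does not use; the paper instead relies on the explicit dual recursion of Algorithm~1b. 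Both proofs rest on the same engine, strict positivity of the service rate under the assumption $N<\beta/\alpha+1$.
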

As a consequence of the assumed order of ${\mathbf d}^*$ and of the above lemma we assert that an optimal schedule can only be attained with an ordered ${\mathbf a}$ whose individual coordinates lie in a compact interval, as shown in the following lemma.

\begin{lemma}\label{Lemma:optimal_order}
An optimal arrival schedule satisfies $\underline{a} \le a_1 \le \ldots \le a_N \le \overline{a}$, where
\[
\underline{a} = d^*_1- \frac{N}{\beta- \alpha(N-1)},
\qquad
\overline{a} = d^*_N+\frac{N}{\beta- \alpha(N-1)}.
\]
\end{lemma}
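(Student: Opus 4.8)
The plan is to first reduce to ordered schedules by a rearrangement argument, and then to localize the deviation cost to the first and last busy periods, where a one‑sided free translation pins down a first‑order condition.

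\textbf{Ordering.} Fix the multiset of arrival values $\{a_1,\dots,a_N\}$. Since the dynamics \eqref{eq:integratedForm} are invariant under relabelling the users (all have demand $1$ and share the rate function), the multiset of departure values is determined by the arrival multiset, and by Lemma~\ref{Lemma:departure_order} the $r$-th smallest arrival produces the $r$-th smallest departure. Hence the aggregate sojourn $\gamma\sum_i(d_i-a_i)=\gamma(\sum_i d_i-\sum_i a_i)$ does not depend on how the arrival values are assigned to users, and only $\sum_i(d_i-d_i^*)^2$ does. Expanding the square, minimizing this term amounts to maximizing $\sum_i d_{(r_i)}\,d_i^*$ over assignments, which by the rearrangement inequality (using $d_1^*\le\cdots\le d_N^*$) is achieved by pairing in increasing order. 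So there is an optimal schedule with $a_1\le\cdots\le a_N$, hence $d_1\le\cdots\le d_N$; I analyze such a schedule and abbreviate $W:=N/(\beta-\alpha(N-1))$.

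\textbf{First‑order conditions on the extremal busy periods.} I would decompose time into maximal busy periods ($q(t)\ge 1$); since arrivals and departures are both ordered, each busy period is a contiguous block of users. Let the first block be $\{1,\dots,k\}$, occupying $[a_1,d_k]$. Nothing constrains early arrivals, so this whole block can be rigidly translated by any $\delta<0$ freely, and by $\delta>0$ up to the idle gap that follows it; by translation invariance this shifts $d_1,\dots,d_k$ by $\delta$, leaves every other departure and every sojourn time unchanged, and changes the cost only through $g(\delta)=\sum_{i=1}^k(d_i+\delta-d_i^*)^2$. Because the gap after a \emph{maximal} busy period (that is not the whole schedule) is strictly positive, $\delta=0$ is an interior feasible point, so optimality forces $g'(0)=0$, i.e. $\sum_{i=1}^k(d_i-d_i^*)=0$. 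The mirror argument on the last block $\{m,\dots,N\}$ (free translation to the right, bounded to the left by the preceding gap) gives $\sum_{i=m}^N(d_i-d_i^*)=0$. I expect this to be the main obstacle: an optimal schedule can contain genuine idle gaps (widely separated due dates), so the naive claim that the system is busy throughout $[a_1,d_N]$ is false; restricting the translation argument to the extremal busy periods is precisely what repairs it.

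\textbf{Width bound and conclusion.} Within any busy period the aggregate throughput obeys $q(t)\,v(q(t))\ge 1\cdot v(N)=\beta-\alpha(N-1)$, while the total work completed in the first block is exactly $k$; integrating gives $d_k-a_1\le k/(\beta-\alpha(N-1))\le W$, hence $d_i\le d_k\le a_1+W$ for all $i\le k$. Combined with $\sum_{i=1}^k(d_i-d_i^*)=0$ and $d_i^*\ge d_1^*$, this yields $0=\sum_{i=1}^k(d_i-d_i^*)\le k\big((a_1+W)-d_1^*\big)$, so $a_1\ge d_1^*-W=\underline a$. Symmetrically, for the last block $a_N-d_m\le d_N-a_m\le W$ gives $d_i\ge d_m\ge a_N-W$ for $i\ge m$, and with $\sum_{i=m}^N(d_i-d_i^*)=0$ and $d_i^*\le d_N^*$ we get $0\ge(N-m+1)\big((a_N-W)-d_N^*\big)$, i.e. $a_N\le d_N^*+W=\overline a$. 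Since $a_1\le\cdots\le a_N$, all coordinates lie in $[\underline a,\overline a]$, as claimed.
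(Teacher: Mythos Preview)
Your proof is correct. For the ordering step your rearrangement argument is essentially the paper's pairwise interchange dressed in different language, so there is no substantive difference there.

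For the boundedness step you take a genuinely different route. The paper argues by construction: if $a_1<\underline a$, it builds a strictly better schedule by pushing $a_1$ up to $\underline a$ and, if necessary, shifting later users rightward, appealing to the fact that all $N$ users could be served disjointly within $[\underline a,d_1^*]$ (length $N/(\beta-\alpha(N-1))>N/\beta$). You instead exploit a first-order optimality condition: translating the first maximal busy block leaves sojourn costs unchanged and moves only the deviation cost, so stationarity at the optimum forces $\sum_{i\le k}(d_i-d_i^*)=0$; combining this with the throughput bound $q\,v(q)\ge v(N)$ (valid because $q\ge 1$ and $v$ is decreasing) and the work identity $\int_{a_1}^{d_k} q\,v(q)\,dt=k$ pins down $a_1\ge\underline a$. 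Your approach is tighter in that it actually yields a necessary condition on every optimal schedule (the zero-mean equation on each extremal block), whereas the paper's improvement step is more elementary and avoids the busy-period decomposition altogether. Both require the same coarse width estimate $N/(\beta-\alpha(N-1))$, and both implicitly use existence of an optimiser, which follows from continuity and coercivity of $c$.
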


We may thus define the search region for the optimal schedule:
\[
{\cal R} = \{ {\mathbf a} \in {\mathbbm R}^N ~:~ \underline{a} \le a_1 \le \ldots \le a_N \le \overline{a}   \},
\]
and take our scheduling problem to be $\min_{{\mathbf a} \in {\cal R}} ~c({\mathbf a})$.\\[1pt]

No strict condition on the joint order of $a_i$ and $d_i$ can be imposed except for the requirement that $a_i < d_i$ for any $i$ (the sojourn time of all users is strictly positive). We are thus motivated to define the following for $i\in\mathcal{N}$:
\begin{align}
\label{eq:KiDef}
k_i
&:= \max \big\{ k \in\mathcal{N} ~:~ a_k \le d_i \big\}
=
\min \big\{ k \in\mathcal{N} ~:~ a_{k+1} > d_i \big\},
\\
\label{eq:HiDef}
h_i
&:=\min 
\big\{ h \in\mathcal{N} ~:~ d_h \ge a_i\big\} 
=
 \max \big\{ h \in\mathcal{N} ~:~ d_{h-1} < a_i\big\}.
\end{align}
The variable $k_i$ specifies the interval $[a_{k_i},a_{k_i+1})$ in which $d_i$ resides. Similarly the variable $h_i$ specifies that $a_i$ lies in the interval $(d_{h_i-1},d_{h_i}]$. Note that we define $a_0,\,d_0:=-\infty$ and  $a_{N+1},\, d_{N+1}:=\infty$. The sequences $k_i$ and $h_i$ satisfy some basic properties: (i)~They are non-decreasing and are confined to the set $\cal{N}$. (ii)~From the fact that $a_i<d_i$ we have that $i \le k_i$.  (iii)~Since ${\mathbf d}$ is an ordered sequence and also $a_i < d_i$ we have $h_i \le i$. (iv)~We have $h_1=1$ and $k_N=N$. (v)~Each sequence determines the other:
\[
k_i = \max \big\{ k \in\mathcal{N}  : 
h_k \le i \big\},
\quad
\mbox{and}
\quad
h_i = \min \big\{ h \in\mathcal{N}  : 
k_h \ge i \big\}.
\]
Thus given either the sequence $k_i,~i\in\mathcal{N}$ or the sequence $h_i,~i\in\mathcal{N}$ or both, the ordering of the $2N$ tuple $(a_1,\ldots,a_N,d_1,\ldots,d_N)$ is fully specified as long as we require that $a_i$'s and $d_i$'s are ordered so as to be consistent with Lemmas \ref{Lemma:departure_order} and \ref{Lemma:optimal_order}. 

We denote the set of possible $\mathbf{k} = (k_1,\ldots,k_N)'$ by 
\begin{equation}
\mathcal{K}:=\left\lbrace\mathbf{k}\in{\cal N}^N \,:\,  k_N=N,\, k_i \le k_{j} \ \forall i \le j\right\rbrace.
\end{equation}
Similarly, we denote the set of possible ${\mathbf h} = (h_1,\ldots,h_N)'$ by ${\cal H}$. 
We have that,
\[
|{\cal K}| =|{\cal H}|= \frac{{2N \choose N}}{N+1}.
\]
This follows (for example) by observing that the elements of ${\cal K}$ correspond uniquely to lattice paths in the $N \times N$ grid from bottom-left to top-right with up and right movements without crossing the diagonal. The number of such elements is the $N$'th Catalan number, see for example p259 in \cite{koshy2009catalan}.

The following example illustrates the dynamics of the model (without optimization) and shows the role of ${\mathbf k}$, or alternatively ${\mathbf h}$, in summarizing the piecewise affine dynamics.

\begin{example}
Take $\beta=1/2$, $\alpha=1/6$ and $N=3$. This $3$ user system exhibits rates that are either $1/2, 1/3$ or $1/6$ depending on the number of users present. The free flow sojourn time is $1/\beta=2$. Assume $a_1=0$, $a_2=1$ and $a_3=3$. We now describe the dynamics of the system. See also Figure~\ref{fig:Example1}. 

\begin{figure}[h!]
\centering
\begin{tikzpicture}[xscale=2,yscale=0.15]
  \def\xmin{0}
  \def\xmax{5.75}
  \def\ymin{0}
  \def\ymax{35}
	\fill[color=lightgray]   (3,0) -- (3,30) -- (3.75,22.5) --  (5.25,0);	
	\fill[color=gray]   (1,0) -- (1,30) -- (2.5,15) -- (3,7.5) -- (3.75,0);	
	\fill[color=darkgray]   (0,0) -- (0,30)-- (1,15) -- (2.5,0);
	\draw[->, ultra thick] (\xmin,0) -- (\xmax,0) node[right] {$t$};
	\draw[->, ultra thick] (\xmin-0.1,0) -- (\xmin-0.1,\ymax) node[right] {work};
	
	\draw[->, black, ultra thick] (0,0)--(0,32) node[right] {$a_1=0.00 \ (h_1=1)$};
	\draw[->, black, ultra thick] (1,0)--(1,30) node[right] {$a_2=1.00 \ (h_2=1)$};
	\draw[->, black, ultra thick] (3,0)--(3,30) node[right] {$a_3=3.00 \ (h_3=2)$};	

	\draw[->, black, ultra thick] (2.5,0)--(2.5,-5) node[below] {$d_1=2.50 \ (k_1=2)$};
	\draw[->, black, ultra thick] (3.75,0)--(3.75,-8) node[below] {$d_2=3.75 \ (k_2=3)$};
	\draw[->, black, ultra thick] (5.25,0)--(5.25,-5) node[below] {$d_3=5.25 \ (k_3=3)$};		
	
\end{tikzpicture}
\caption{An illustration of the dynamics of a three user example. The shaded gray areas show the remaining work for each individual user. Work is depleted at rate $\frac{1}{2}$ when only one user is present and is depleted at the slower rate of $\frac{1}{3}$ when two users are present.
\label{fig:Example1}}
\end{figure}
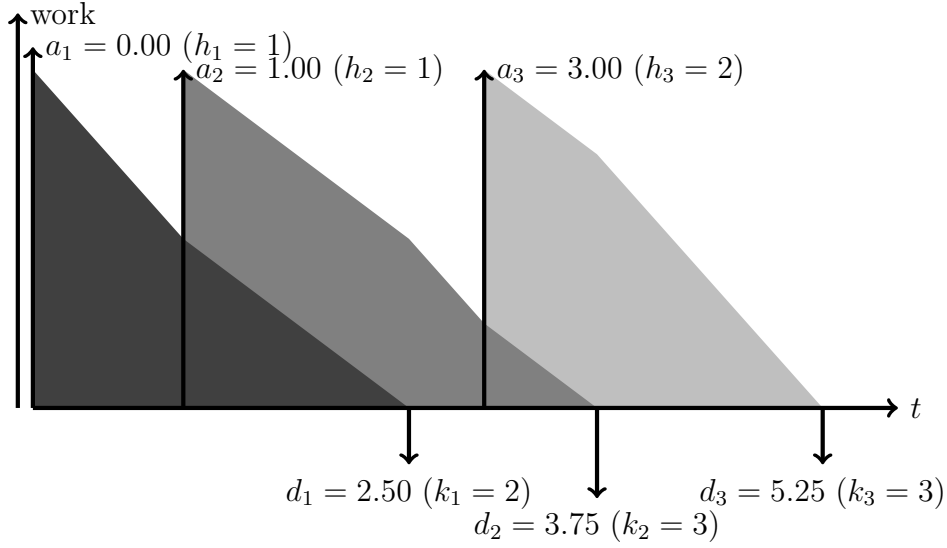

During the time interval $[0,1)$, $q(t)=1$ and the first user is being served at rate $1/2$. By time $t=1$ the remaining service required by that user is $1/2$. At time $t=1$, the number of users in the system, $q(t)$, grows to $2$ and the rate of service to each user is reduced to $1/3$. This means that without a further arrival causing further slowdown, user $1$ is due to leave at time $t=2.5$. Since $2.5<a_3$, this is indeed the case. At $t=2.5$, $q(t)$ changes from $2$ to $1$. By that time, the remaining service required by user $2$ is $1/2$. Then during the time interval $[2.5,3)$ user $2$ is served at rate $1/2$ reducing the remaining service of that user to $1/4$. At time $t=3$, user $3$ joins, increasing $q(t)$ back to $2$ and reducing the service rate again to $1/3$. User $2$ then leaves at time $t=3.75$ and as can be verified using the same types of simple calculations, user $3$ finally leaves at time $t=5.25$.

Observe that for this example, the order of events is:
\[
a_1 \le a_2 \le d_1 \le a_3 \le d_2 \le d_3.
\]
This then implies that for this schedule,
\[
k_1 = 2,~ k_2 = 3,~ k_3 = 3,
\quad
\mbox{and}
\quad
h_1 = 1,~ h_2 = 1,~ h_3 = 2.
\]
\end{example}

\section{Arrival Departure Dynamics}
\label{sec:dynamics}

We now investigate the relationship between arrivals and departures, induced by the linear slowdown dynamics. 
 \begin{proposition}\label{Lemma_departure_function}
Equation \eqref{eq:integratedForm} can be expressed as
\begin{equation}
\label{eq:dRecursion}
(\beta-\alpha(k_i-i))d_i-\alpha\sum_{j=h_i}^{i-1}d_j-(\beta-\alpha(i-h_i))a_i+\alpha\sum_{j=i+1}^{k_i}a_j=1 ,
\quad
i \in {\cal N},
\end{equation}
or alternatively,
\begin{equation}
\label{eq:daMatrix}
D\, {\mathbf d} - A \, {\mathbf a} = {\mathbf 1},
\end{equation}
with the matrices $A\in\mathbbm{R}^N$ and $D\in\mathbbm{R}^N$ defined as follows:
\[
A_{ij}:=
\left\{
	\begin{array}{ll}
		\beta-\alpha(i-h_i) \mbox{, } &  i=j, \\
		-\alpha \mbox{, } &  i+1\leq j\leq k_i , \\
		0\mbox{, } &  o.w.
	\end{array}
\right.
 \quad
D_{ij}:=
\left\{
	\begin{array}{ll}
		\beta-\alpha(k_i-i) \mbox{, } &  i=j, \\
		-\alpha \mbox{, } &  h_i\leq j\leq i-1,  \\
		0\mbox{, } &  o.w.
	\end{array}
\right. 
\]
\end{proposition}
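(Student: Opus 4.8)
The plan is to evaluate the integral in \eqref{eq:integratedForm} explicitly by decomposing it according to which users' sojourn intervals overlap that of user $i$, and then to read off coefficients to obtain both the scalar recursion \eqref{eq:dRecursion} and its matrix form \eqref{eq:daMatrix}. First I would pull the constant out of the integrand: since $q(t)=\sum_{j\in\mathcal{N}}{\mathbbm 1}\{t\in[a_j,d_j]\}$, we have $\beta-\alpha(q(t)-1)=(\beta+\alpha)-\alpha q(t)$, so that
\[
1 = (\beta+\alpha)(d_i-a_i) - \alpha \int_{a_i}^{d_i} q(t)\,dt.
\]
The key observation is that $\int_{a_i}^{d_i} q(t)\,dt = \sum_{j\in\mathcal{N}} \big|[a_i,d_i]\cap[a_j,d_j]\big|$, i.e.\ it is the sum over all users of the length of overlap of their interval with that of user $i$.

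The crux of the argument is a case analysis on $j$ organised by the indices $h_i$ and $k_i$. Using the definition \eqref{eq:HiDef}, for $j<h_i$ we have $d_j<a_i$, so $[a_j,d_j]$ lies entirely to the left of $[a_i,d_i]$ and contributes no overlap; symmetrically, by \eqref{eq:KiDef}, for $j>k_i$ we have $a_j>d_i$ and again no overlap. For $h_i\le j\le k_i$ I would split into three sub-cases, invoking the monotonicity of ${\mathbf a}$ (assumed) and ${\mathbf d}$ (Lemma~\ref{Lemma:departure_order}): for $h_i\le j<i$ the $j$-interval is left-shifted, so the overlap is $[a_i,d_j]$ of length $d_j-a_i$; for $j=i$ the overlap is the whole interval, of length $d_i-a_i$; and for $i<j\le k_i$ the $j$-interval is right-shifted, giving overlap $[a_j,d_i]$ of length $d_i-a_j$. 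Summing these contributions yields
\[
\int_{a_i}^{d_i} q(t)\,dt = \sum_{j=h_i}^{i-1}(d_j-a_i) + (d_i-a_i) + \sum_{j=i+1}^{k_i}(d_i-a_j).
\]

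Finally I would substitute this back and collect terms, noting that the first sum has $i-h_i$ terms and the last has $k_i-i$ terms. The coefficient of $d_i$ becomes $(\beta+\alpha)-\alpha-\alpha(k_i-i)=\beta-\alpha(k_i-i)$, the coefficient of $a_i$ becomes $-(\beta+\alpha)+\alpha+\alpha(i-h_i)=-(\beta-\alpha(i-h_i))$, and the remaining cross terms leave $-\alpha\sum_{j=h_i}^{i-1}d_j+\alpha\sum_{j=i+1}^{k_i}a_j$, which is exactly \eqref{eq:dRecursion}. Reading the coefficients of the entries of ${\mathbf d}$ and ${\mathbf a}$ off this scalar identity then gives the diagonal entries $\beta-\alpha(k_i-i)$ and off-diagonal entries $-\alpha$ of $D$, and $\beta-\alpha(i-h_i)$ together with $-\alpha$ of $A$, establishing \eqref{eq:daMatrix}.

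I expect the main obstacle to be bookkeeping in the case analysis rather than any deep difficulty: one must verify the precise endpoints of each overlap interval and confirm that the sub-case ranges partition $\{h_i,\dots,k_i\}$ correctly, relying only on the weak inequalities guaranteed by the definitions and lemmas. In particular, boundary ties such as $a_j=d_i$ or $d_j=a_i$ must be handled consistently with the $\le$ and $\ge$ appearing in \eqref{eq:KiDef} and \eqref{eq:HiDef}, so that the degenerate (zero-length) overlaps fall on the correct side of each sum; these contribute nothing to the integral and so do not affect the identity.
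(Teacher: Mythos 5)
Your proposal is correct and follows essentially the same route as the paper: both rewrite the integrand as $(\beta+\alpha)-\alpha q(t)$, identify $\int_{a_i}^{d_i}q(t)\,dt$ as a sum of pairwise interval overlaps, and use the orderings of ${\mathbf a}$ and ${\mathbf d}$ together with the definitions of $h_i$ and $k_i$ to reduce each overlap to $d_j-a_i$, $d_i-a_i$, or $d_i-a_j$ before collecting coefficients. The only cosmetic difference is that the paper reaches the same case split via $(d_i\wedge d_j-a_i\vee a_j)^+$ identities rather than your direct enumeration of which intervals are left-shifted, contained, or right-shifted.
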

\begin{proof}
We manipulate \eqref{eq:integratedForm} to get,
\begin{align*}
1 
&= (\beta+\alpha)(d_i-a_i) - \alpha \sum_{j=1}^N \int_{a_i}^{d_i}   {\mathbbm 1}\{t \in [a_j,d_j]\}dt\\ 
&=
(\beta+\alpha)(d_i-a_i)-\alpha\sum_{j=1}^N\left(d_i\wedge d_j-a_i\vee a_j\right)^{+} 
\\
&=
(\beta+\alpha)(d_i-a_i)
- \alpha\sum_{j=1}^{i-1}\left(d_i\wedge d_j-a_i\vee a_j\right)^{+}
-\alpha(d_i-a_i)
- \alpha \sum_{j=i+1}^N \left(d_i\wedge d_j-a_i\vee a_j\right)^{+}.
\\
&=
\beta(d_i-a_i)
- \alpha\sum_{j=1}^{i-1}\left(d_i\wedge d_j-a_i\vee a_j\right)^{+}
- \alpha \sum_{j=i+1}^N \left(d_i\wedge d_j-a_i\vee a_j\right)^{+}.
\end{align*}
where in the third step we have used the fact that $a_i < d_i$ for the term corresponding to $j=i$. We now use the fact that ${\mathbf a}$ and ${\mathbf d}$ are both ordered to get,
\begin{align*}
1
&=
\beta(d_i-a_i)
- \alpha\sum_{j=1}^{i-1}\left(d_j - a_i\right)^{+}
- \alpha \sum_{j=i+1}^N \left(d_i -  a_j \right)^{+}
\\
%
%
&= 
\beta(d_i-a_i) 
-\alpha \sum_{j=1}^{i-1} (d_j- a_i\wedge d_j)
-\alpha \sum_{j=i+1}^{N} (d_i - a_j\wedge d_i) 
\\
&= -\beta a_i+(\beta-\alpha(N-i))d_i
-\alpha\sum_{j=1}^{i-1}d_j
+\alpha\sum_{j=1}^{i-1}(a_i\wedge d_j)
+\alpha\sum_{j=i+1}^{N}(a_j\wedge d_i).
\end{align*}
Now the summations $\sum_{j=1}^{i-1}(a_i\wedge d_j)$ and $\sum_{j=i+1}^{N}(a_j\wedge d_i)$ can be broken up as follows:
\begin{align*}
\sum_{j=1}^{i-1}(a_i\wedge d_j) 
&= 
\sum_{j=1}^{i-1} {\mathbbm 1}\{d_j < a_i\} d_j + 
\sum_{j=1}^{i-1} {\mathbbm 1}\{d_j \ge a_i\} a_i\\
&=
\sum_{j=1}^{h_i-1} d_j + 
\sum_{j=h_i}^{i-1} a_i
=
\sum_{j=1}^{h_i-1} d_j 
\,+ \,
(i-h_i) a_i,\\
\sum_{j=i+1}^{N}(a_j\wedge d_i)
&= \sum_{j=i+1}^N {\mathbbm 1} \{a_j > d_i\} d_i+ 
\sum_{j=i+1}^N {\mathbbm 1} \{a_j \le d_i\} a_j\\
&= 
\sum_{j=k_i+1}^N d_i
+
\sum_{j=i+1}^{k_i} a_j
=
(N-k_i) d_i + \sum_{j=i+1}^{k_i} a_j.
\end{align*}
Combining the above we obtain:
\begin{align*}
1 
&= -(\beta - \alpha(i-h_i))a_i + (\beta-\alpha(k_i-i))d_i - \alpha \Big(
\sum_{j=1}^{i-1} d_j - \sum_{j=1}^{h_i-1} d_j
\Big)
+\alpha \sum_{j=i+1}^{k_i} a_j
\\
&= -(\beta - \alpha(i-h_i))a_i + (\beta-\alpha(k_i-i))d_i - \alpha \Big(
\sum_{j=h_i}^{i-1} d_j - \sum_{j=i+1}^{k_i} a_j
\Big).
\end{align*}
Rearranging we obtain \eqref{eq:dRecursion}.
\qed\end{proof}

\noindent
The following observations are a consequence of Proposition \ref{Lemma_departure_function}:

\begin{enumerate}
\item Consider some user $i$ arriving at time $a_i$ to an empty system, and departing at time $d_i$ to leave an empty system. In this case there are no other users effecting his sojourn time or rate. For such a user $k_i=h_i=i$. In this case \eqref{eq:dRecursion} implies that $d_i = a_i + 1/\beta$ as expected.
\item The matrices $A$ and $D$ are lower and upper triangular, respectively, with a non-zero diagonal, and are therefore both non-singular. 
\item 
For the special cases $i=1$ and $i=N$ (using the fact $h_1=1$ and $k_N=N$):
\[
\big( \beta - \alpha(k_1 - 1) \big) d_1 - \beta\, a_1 + \alpha \sum_{j=2}^{k_1} a_j = 1,
\quad
\mbox{and}
\quad
\beta \, d_N - \alpha \sum_{j=h_N}^{N-1} d_j - \big( \beta - \alpha(N-h_N) \big) a_N = 1.
\]
I.e.,
\[
d_1 =  \frac{1+\beta a_1-\alpha\sum_{j=2}^{k_1}a_j}{\beta-\alpha(k_1-1)},
\qquad
a_N = \frac{\beta d_N - \alpha \sum_{j=h_N}^{N-1} d_j - 1 }{\beta - \alpha(N - h_N)}.
\]
%
%
\end{enumerate}

The above structure suggests iterative algorithms for either determining ${\mathbf a}$ based on ${\mathbf d}$ or vice-versa. In both cases, ${\mathbf k}$ and ${\mathbf h}$ are found as bi-products. As an aid to describing these algorithms, define for $i, k, h \in {\cal N}$ and for a given ${\mathbf a}$ (respectively ${\mathbf d}$), the functions $\tilde{d}_{i,k,h}(\cdot \,|\, \mathbf{a}), \tilde{a}_{i,k,h}(\cdot \,|\, \mathbf{d}): {\mathbb R}^N \to {\mathbb R}$ as follows,
\begin{align*}
\tilde{d}_{i,k,h}\Big(\tilde{{\mathbf d}} \,\Big|\, \mathbf{a} 
\Big) 
&:=
\frac{
1 + \big(\beta - \alpha(i-{h})\big)a_i + \alpha \Big(
\sum_{j={h}}^{i-1} \tilde{d}_j - \sum_{j=i+1}^{{k}} a_j
\Big)
}{\beta-\alpha({k}-i)},
\\
\tilde{a}_{i,k,h}\Big(\tilde{{\mathbf a}} \,\Big|\, \mathbf{d}
\Big) 
&:=
\frac{
 \big(\beta - \alpha(k-i)\big)d_i - \alpha \Big(
\sum_{j={h}}^{i-1} {d}_j - \sum_{j=i+1}^{{k}} \tilde{a}_j
\Big) -1
}{\beta-\alpha(i-h)}.
\end{align*}
Observe that in the evaluation of these functions, the arguments, $\tilde{{\mathbf d}}$ or $\tilde{\mathbf a}$ are only utilized for the coordinates indexed $h,\ldots,i-1$ or $i+1,\ldots,k$ respectively (if $i=1$ or respectively $i=N$ these index lists are empty). Further observe that stated in terms of $\tilde{d}(\cdot)$ or $\tilde{a}(\cdot)$ and given ${\mathbf k} \in {\cal K}$ and ${\mathbf h} \in {\cal H}$,  equations  \eqref{eq:dRecursion} can be represented as,
\[
d_i = \tilde{d}_{i,k_i,h_i}\Big(\big(d_1,\ldots,d_{N}\big)' \, | \, \big(a_1,\ldots,a_{N}\big)'\Big),
\qquad
i\in\mathcal{N},
\]
or alternatively,
\[
a_i = \tilde{a}_{i,k_i,h_i}\Big(\big(a_1,\ldots,a_{N}\big)' \, | \, \big(d_1,\ldots,d_{N}\big)'\Big),
\qquad
i\in\mathcal{N}.
\]
Given the above we have two (dual) algorithms for determining the network dynamics. Algorithm~1a finds the departure times based on arrival times. Algorithm~1b finds the arrival times given the departure times. 
\begin{proposition}
\label{prop:alg1_complexity}
Algorithm~1a finds the unique solution ${\mathbf d}$ to equations \eqref{eq:integratedForm}, given~${\mathbf a}$. Similarly Algorithm~1b finds a unique solution ${\mathbf a}$ to the equations, given ${\mathbf d}$. Both algorithms require at most $2N$ steps in each of which \eqref{eq:dRecursion} is evaluated.
\end{proposition}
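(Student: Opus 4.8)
\emph{Proof proposal.} My plan is to treat Algorithm~1a in detail and obtain Algorithm~1b by the dual argument (interchanging the roles of ${\mathbf a}$ and ${\mathbf d}$, of $k_i$ and $h_i$, and processing users in decreasing rather than increasing index order). Throughout I use that, by Lemma~\ref{lemma:unique}, a solution once produced is necessarily \emph{the} solution; so it suffices to show the algorithm outputs a vector satisfying \eqref{eq:integratedForm} together with the consistency relations \eqref{eq:KiDef}--\eqref{eq:HiDef}, and then to count the evaluations of \eqref{eq:dRecursion}.

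First I would argue sequential solvability. Because $\tilde d_{i,k,h}(\,\cdot\mid{\mathbf a})$ depends on the departure argument only through the coordinates $d_{h},\dots,d_{i-1}$ (the triangular structure noted after Proposition~\ref{Lemma_departure_function}), the departures can be resolved one user at a time in increasing order of $i$; when user $i$ is reached, $d_1,\dots,d_{i-1}$ are already in hand. I would then note that $h_i=\min\{h:d_h\ge a_i\}$ is computable from these known departures together with the a priori fact $a_i<d_i$ (which forces $h_i\le i$), so $h_i$ is obtained by comparisons only, not by evaluating \eqref{eq:dRecursion}. The index $k_i$ cannot be read off in advance, since by \eqref{eq:KiDef} it is defined through $d_i$ itself; the algorithm therefore searches. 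Starting from $k=k_{i-1}$ (legitimate since the $k_i$ are non-decreasing, property~(i)), it forms the tentative departure $g(k):=\tilde d_{i,k,h_i}(\,\cdot\mid{\mathbf a})$, accepts $k_i=k$ and $d_i=g(k)$ as soon as $g(k)<a_{k+1}$, and otherwise increments $k$.

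The crux — and the step I expect to require the most care — is to show this guarded search accepts at exactly the true index $k^\ast:=k_i$. I would establish the one-line monotonicity identity
\[
g(k+1)-g(k)=\frac{\alpha\,\big(g(k)-a_{k+1}\big)}{\beta-\alpha(k+1-i)},
\]
obtained by writing $g(k)=P/Q$ with $Q=\beta-\alpha(k-i)>0$ and simplifying, the denominator $\beta-\alpha(k+1-i)>0$ being positive by the standing assumption $N<\beta/\alpha+1$. Hence $\operatorname{sign}\big(g(k+1)-g(k)\big)=\operatorname{sign}\big(g(k)-a_{k+1}\big)$. Two consequences follow. Termination: while the test fails the tentative departure is non-decreasing, and since $a_{N+1}=\infty$ the test must succeed by $k=N$. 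No premature acceptance: taking the largest $k<k^\ast$ that would violate $g(k)\ge a_{k+1}$, the identity forces $g(k+1)<g(k)<a_{k+1}$, contradicting either the maximality of $k$ (when $k+1<k^\ast$, where $g(k+1)\ge a_{k+2}\ge a_{k+1}$) or the defining inequality $g(k^\ast)\ge a_{k^\ast}$ from \eqref{eq:KiDef} (when $k+1=k^\ast$). Thus with $d_1,\dots,d_{i-1}$ and $h_i$ correct, the search accepts at $k^\ast$, producing a $d_i$ that satisfies \eqref{eq:dRecursion} and lies in $[a_{k^\ast},a_{k^\ast+1})$, so all consistency relations hold; induction on $i$ yields a full solution of \eqref{eq:integratedForm}, which by uniqueness is the required ${\mathbf d}$.

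Finally I would bound the work. If the search for user $i$ runs from $k=k_{i-1}$ to $k=k_i$ it evaluates \eqref{eq:dRecursion} exactly $k_i-k_{i-1}+1$ times; summing over $i=1,\dots,N$ telescopes to $k_N-k_0+N\le 2N$, using $k_N=N$ (property~(iv)) and the initialization $k_0\ge 0$. The comparisons that locate the $h_i$ add only bookkeeping and no further evaluations of \eqref{eq:dRecursion}. Algorithm~1b is identical after the dual substitutions, now searching the non-increasing sequence $h_i$ via $\tilde a_{i,k,h}(\,\cdot\mid{\mathbf d})$ from $h_{i+1}$ down to $h_i$ with terminal value $h_1=1$, and the same telescoping gives the bound $2N$.
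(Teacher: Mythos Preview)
Your argument is correct and, for the correctness half, considerably more careful than the paper's. The paper dispatches correctness in two sentences, essentially asserting that the loop ``checks every possible departure interval $[a_k,a_{k+1})$'' and hence finds the unique solution; it does not address why the guarded search cannot accept a wrong $k$ prematurely. You fill this gap with the identity
\[
g(k+1)-g(k)=\frac{\alpha\big(g(k)-a_{k+1}\big)}{\beta-\alpha(k+1-i)},
\]
which links the sign of the increment to the loop test and lets you rule out early termination by the clean maximal-counterexample argument. This is a genuine addition over the paper's proof. One cosmetic point: the algorithm actually initializes at $k=i\vee k_{i-1}$, not $k=k_{i-1}$; since $i\vee k_{i-1}\ge k_{i-1}$ this only helps your telescoping bound, but you may wish to state it precisely.

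For the step count the two approaches differ in bookkeeping but are equivalent. The paper partitions the evaluations into ``correct'' ones (exactly $N$, one per user) and ``false'' ones, then observes that each arrival interval $[a_k,a_{k+1})$ can generate at most one false evaluation because subsequent users start their search at $k_{i-1}$ or beyond; hence at most $N+N=2N$. Your telescoping $\sum_i(k_i-k_{i-1}+1)=k_N-k_0+N\le 2N$ is the same count read off differently. Either way the bound is tight up to a constant, and the duality you invoke for Algorithm~1b is exactly what the paper does as well.
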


\begin{algorithm}[h]
\renewcommand{\thealgorithm}{}
\caption{\textbf{1a}: Determination of network dynamics with given arrival times}
\label{algo:NetworkDynamics}
\textbf{Input}: $\mathbf{a} \in {\mathbb R}^N$ such that $a_1\le a_2 \le \ldots \le a_N$ \\
\textbf{Output}: $\mathbf{d}=(d_1,...,d_N)$, $\mathbf{k}=(k_1,...,k_N)$ and $\mathbf{h}=(h_1,...,h_N)$
\begin{algorithmic}
\State init $\mathbf{k}=\mathbf{h}=(1,2,3,\dots,N)$
\State init $\mathbf{d}=\emptyset$
\For{ $i=1,\ldots,N$}  
	\State set $k=i\vee k_{i-1}$  (taking $k_0 := 1$)
		\State compute $\tilde{d}_i(k,h_i,\mathbf{d} \, | \, {\mathbf a})$
	\While{$\tilde{d}_i(k,h,\mathbf{d} \, | \, {\mathbf a}) \le a_{k+1}$}
		\State increment $k$
		\State compute $\tilde{d}_i(k,h_i,\mathbf{d} \, | \, {\mathbf a})$		
	\EndWhile
	\State set $k_i=k$	
	\State set $d_i=\tilde{d}_i(k,h_i,\mathbf{d}\, | \, {\mathbf a})$
	\State set $h_{i+1}=
\max \big\{ h\in \{1,\ldots,i+1\}  : 
k_h \ge i+1 \big\}$
\EndFor	
\State \textbf{return} {$(\mathbf{d},\mathbf{k},\mathbf{h})$}
\end{algorithmic}
\end{algorithm}

\begin{algorithm}
\renewcommand{\thealgorithm}{}
\caption{\textbf{1b}: Determination of network dynamics with given departure times}\label{algo:NetworkDynamicsBasedD}
\textbf{Input}: $\mathbf{d} \in {\mathbb R}^N$ such that $d_1\le d_2 \le \ldots \le d_N$ \\
\textbf{Output}: $\mathbf{a}=(a_1,...,a_N)$, $\mathbf{k}=(k_1,...,k_N)$ and $\mathbf{h}=(h_1,...,h_N)$
\begin{algorithmic}
\State init $\mathbf{k}=\mathbf{h}=(1,2,3,\dots,N)$
\State init $\mathbf{d}=\emptyset$
\For{ $i=N,\ldots,1$}  
	\State set $h=i \wedge h_{i+1}$  (taking $h_{N+1} := N$)
		\State compute $\tilde{a}_i(k_i,h,\mathbf{a} \, | \, {\mathbf d})$
	\While{$\tilde{a}_i(k_i,h,\mathbf{a} \, | \, {\mathbf d}) \ge d_{h-1}$}
		\State decrement $h$
		\State compute $\tilde{a}_i(k_i,h,\mathbf{a} \, | \, {\mathbf d})$		
	\EndWhile
	\State set $h_i=h$	
	\State set $a_i=\tilde{a}_i(k_i,h,\mathbf{a} \, | \, {\mathbf d})$
	\State set $k_{i-1}= \min \big\{ k\in \{i-1,\ldots,N\}  : 
h_k \le i-1 \big\}$
\EndFor	
\State \textbf{return} {$(\mathbf{a},\mathbf{k},\mathbf{h})$}
\end{algorithmic}
\end{algorithm}
%

\subsection{Optimizing for Extreme Cases of $\gamma$}
\label{sec:polGamma}

As described in the introduction, optimizing \eqref{eq:totalCost} when $\gamma=0$ or $\gamma \approx \infty$ can be done efficiently. For the case $\gamma=0$, all that is needed is to schedule arrivals so that each departure time, $d_i$ is exactly at $d_i^*$.  This achieves zero costs. 
Such a schedule is simply obtained by running Algorithm~1b with input ${\mathbf d}= {\mathbf d}^*$. This immediately leads to the following corollary of Proposition~\ref{prop:alg1_complexity}:
\begin{corollary}
\label{thm:efficientGammaZero}
For the special case $\gamma=0$ there is an efficient polynomial time algorithm that finds the unique optimal schedule, ${\mathbf a}^0$, achieving $c({\mathbf a}^0)=0$.
\end{corollary}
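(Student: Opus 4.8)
The plan is to exploit the fact that with $\gamma=0$ the sojourn-time penalty $\gamma(d_i-a_i)$ vanishes, so the total cost collapses to a pure sum of squared departure-time deviations. First I would substitute $\gamma=0$ into \eqref{eq:costUserShort} and \eqref{eq:totalCost} to obtain
\[
c(\mathbf{a}) = \sum_{i\in\mathcal{N}} \big(d_i(\mathbf{a}) - d_i^*\big)^2 .
\]
Since each summand is nonnegative, $c(\mathbf{a})\ge 0$ for every $\mathbf{a}$, with equality if and only if $d_i(\mathbf{a})=d_i^*$ for all $i$, i.e. if and only if $\mathbf{d}(\mathbf{a})=\mathbf{d}^*$. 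Thus achieving zero cost is exactly equivalent to finding an arrival schedule whose induced departure vector equals the ideal vector $\mathbf{d}^*$, which pins the optimal value of the problem at $0$.

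Second, I would invoke Lemma~\ref{lemma:unique}: because $\mathbf{d}^*$ is ordered by assumption, it is an admissible departure vector, and there is a unique $\mathbf{a}$ inducing it. This simultaneously settles existence and uniqueness of the zero-cost schedule $\mathbf{a}^0$, and therefore establishes that $\mathbf{a}^0$ is the \emph{unique} global minimiser of $c$. To produce $\mathbf{a}^0$ explicitly I would run Algorithm~1b with input $\mathbf{d}=\mathbf{d}^*$; by Proposition~\ref{prop:alg1_complexity} this returns precisely the unique solution $\mathbf{a}^0$ of \eqref{eq:integratedForm}, together with the accompanying sequences $\mathbf{k}$ and $\mathbf{h}$.

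Third, for the efficiency claim I would appeal to the step count in Proposition~\ref{prop:alg1_complexity}: Algorithm~1b terminates in at most $2N$ steps, each of which is a single evaluation of \eqref{eq:dRecursion}, itself a sum of $O(N)$ terms; hence the total work is $O(N^2)$, polynomial in $N$. I would then record that $\mathbf{a}^0$ is a legitimate schedule: its departures $\mathbf{d}^*$ are ordered, so the induced arrivals are ordered consistently with Lemma~\ref{Lemma:departure_order}, and since $\mathbf{a}^0$ attains the global lower bound $0$ it is optimal, so membership in $\mathcal{R}$ follows from Lemma~\ref{Lemma:optimal_order}.

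Because the statement is essentially a direct corollary of the preceding dynamics results, I do not anticipate a genuine obstacle. The only point requiring mild care is the feasibility check on the output of Algorithm~1b, namely confirming that the unique $\mathbf{a}$ mapped to the ordered vector $\mathbf{d}^*$ is itself ordered and lies in $\mathcal{R}$; this is supplied by the structural Lemmas~\ref{Lemma:departure_order} and \ref{Lemma:optimal_order} rather than by any new estimate, so the argument amounts to assembling the reduction, the uniqueness lemma, and the algorithmic complexity bound already in hand.
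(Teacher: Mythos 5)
Your argument is correct and follows essentially the same route as the paper: reduce the $\gamma=0$ problem to finding the unique $\mathbf{a}$ with $\mathbf{d}(\mathbf{a})=\mathbf{d}^*$, obtain it by running Algorithm~1b on input $\mathbf{d}^*$, and get the polynomial bound from Proposition~\ref{prop:alg1_complexity}. The extra care you take with uniqueness (via Lemma~\ref{lemma:unique}) and with the feasibility of the output is a welcome elaboration of details the paper leaves implicit, but it is not a different proof.
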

For the case of large $\gamma$ it is sensible to consider a classic schedule where users do not overlap:
\begin{equation}
\label{eq:NoOverlapConst}
a_i + \frac{1}{\beta} = d_i \le a_{i+1},
\quad
i=1,\ldots,N-1.
\end{equation}
This poses the problem as a classic single machine scheduling problem with due dates (see for example \cite{baker1990sequencing} or \cite{sen1984state}). This implies that the total costs due to sojourn times is at the minimal possible value $\gamma N / \beta$ and the costs due to deviations from ideal departure times is,
\[
\sum_{i \in {\cal N}} (a_i + 1/\beta - d_i^*)^2.
\]
For any finite $\gamma$ this does not necessarily minimize \eqref{eq:totalCost}, but as $\gamma \to \infty$ it is a sensible approximation. I.e. for large $\gamma$ the optimal schedule is approximated by the solution of the following convex quadratic program:
\begin{equation}\label{bigGammaQP}
\begin{aligned}
& \underset{(a_1,\ldots,a_N)' \in {\mathbb R}^N}{\text{min}}
& & \sum_{i=1}^N (a_i + 1/\beta - d_i^*)^2 \\
& \text{s.t.}
& & a_i - a_{i+1} \le -\frac{1}{\beta}, \quad i=1,\ldots,N-1. \\
\end{aligned}
\end{equation}

As for the case $\gamma=0$, the above quadratic program can be efficiently solved using any standard convex quadratic programming method. Denote the optimizer by ${\mathbf a}^\infty$.

\subsection{A Linear Approximation}
\label{sec:linApprox}

Having the schedules  ${\mathbf a}^0$ and ${\mathbf a}^\infty$ for the cases $\gamma=0$ and $\gamma=\infty$ respectively, we are motivated to suggest a set of potential (initial) guesses for the optimal schedule for arbitrary $\gamma$.  Let $M\ge1$ be some integer specifying the number of initial guesses. Then the set of initial guesses lie on the segment interpolating ${\mathbf a}^0$ and ${\mathbf a}^\infty$:
\begin{equation}
\label{eq:3488}
{\cal A} =\Big\{ {\mathbf a}^0  \,\frac{m}{M-1} +  {\mathbf a}^\infty \, \Big(1 - \frac{m}{M-1} \Big) 
~:~
m=0,\ldots,M-1
\Big\},\end{equation}
when $M\ge 2$ or equals $\{{\mathbf a}^0\}$ if $M=1$.
We shall use the $M$ points of ${\cal A}$ as initial guess points for the optimization heuristics that we present in the sequel. This is a sensible choice since every set of due dates $d_1^*,\ldots,d_N^*$ exhibits some contour in ${\cal R}$, parametrized by $\gamma$, corresponding to the optimal schedules (for each $\gamma$). The end points of this contour are ${\mathbf a}^0$ and ${\mathbf a}^\infty$ which we can efficiently find. Thus for $\alpha \in [0,1]$, the points ${\mathbf a}^0  \,\alpha +  {\mathbf a}^\infty \, (1-\alpha)$ constitute a linear approximation of this contour. In cases where the contour is almost not curved we have that the optimal value lies very near to the linear approximation. In other cases, this is simply a set of initial guesses, yet possibly a sensible one. {Note that the values of $M$ do not need to be excessively large because initial points that are close are likely to yield the same local solutions. The numerical analysis of Section \ref{sec:combHeurist} reinforces this observation.}

\section{Piecewise Quadratic Formulation}
\label{sec:optRegion}

Our key observation in this section is that the search region ${\cal R}$ can be partitioned into polytopes indexed by ${\mathbf k} \in {\cal K}$, where over each such polytope, the objective is of a convex quadratic form. This yields $|{\cal K}|$ convex quadratic problems, each of which (individually) can be solved efficiently.  An immediate exhaustive-search algorithm is then to solve all of the problems so as to find the minimising one.  This yields a finite-time exact solution and is a sensible choice for small $N$ (e.g. $N\le 15$). But since,
\[
| {\cal K}| \sim \frac{4^N}{N^{3/2} \sqrt{\pi}},
\]   
solving all convex problems is not a viable method for non-small $N$. We thus also specify a {\em local-search} algorithm which searches elements of ${\cal K}$ by moving across neighbouring polytopes until finding a local optimum.  

The following proposition is key:
\begin{proposition}
The region ${\cal R}$ can be partitioned into polytopes indexed by ${\mathbf k} \in {\cal K}$, and denoted 
\begin{equation*}
\mathbbm{P}_\mathbf{k}:=\left\lbrace \mathbf{a}\in{\cal R} \,:\, a_{k_i}\leq [\Theta_{\mathbf k} \mathbf{a}+{\boldsymbol \eta}_{\mathbf k} ]_i\leq a_{k_i+1},~ i \in {\cal N} \right\rbrace ,
\end{equation*}
where  $\Theta_{\mathbf k} = D^{-1} A$ and $\eta_{\mathbf k} = D^{-1} {\mathbf 1}$ with $A$ and $D$ based on ${\mathbf k}$ are specified by Proposition~\ref{Lemma_departure_function}. Then for ${\mathbf a} \in \mathbbm{P}^\mathbf{k}$ the objective function is convex and is given by,
\[
c_{\mathbf k}({\mathbf a}) =  \mathbf{a}'Q_{\mathbf k} \mathbf{a}+\mathbf{b}_{\mathbf k} \, \mathbf{a}+{\tilde{b}}_{\mathbf k},
\]
with,
\[
\begin{split}
Q_{\mathbf k} &= \Theta'_{\mathbf k} \, \Theta_{\mathbf k} , \\
{\mathbf b}_{\mathbf k} &= 2 \big({\boldsymbol \eta}_{\mathbf k} - {\mathbf d}^*)'\Theta_{\mathbf k} + \gamma {\mathbf 1}'(\Theta_{\mathbf k} -I), \\
{\tilde b}_{\mathbf k} &= ({\boldsymbol \eta}_{\mathbf k} -{\mathbf d}^{*})' ({\boldsymbol \eta}_{\mathbf k} -{\mathbf d}^{*})+{\boldsymbol \eta}_{\mathbf k}.
\end{split}
\] 
 \end{proposition}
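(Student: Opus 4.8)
The plan is to treat the two assertions separately: first that the sets $\mathbbm{P}_{\mathbf k}$ genuinely partition ${\cal R}$, and then that the objective restricts to the stated convex quadratic on each piece. Both rest on a single observation. Once the combinatorial type ${\mathbf k}$ is fixed, the induced ${\mathbf h}$ is also determined (by the $\mathbf{k}\leftrightarrow\mathbf{h}$ correspondence noted in Section~\ref{sec:model}), so the matrices $A$ and $D$ of Proposition~\ref{Lemma_departure_function} are determined; since $D$ is upper triangular with nonzero diagonal it is invertible. Solving the matrix form \eqref{eq:daMatrix}, $D{\mathbf d}-A{\mathbf a}={\mathbf 1}$, then yields the affine relation ${\mathbf d}=D^{-1}A\,{\mathbf a}+D^{-1}{\mathbf 1}=\Theta_{\mathbf k}{\mathbf a}+{\boldsymbol \eta}_{\mathbf k}$. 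First I would record that $\Theta_{\mathbf k}=D^{-1}A$ is itself invertible, being a product of the invertible $D^{-1}$ and $A$; this will matter for convexity.

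For the partition I would argue coverage and disjointness of interiors. Given any ${\mathbf a}\in{\cal R}$, Lemma~\ref{lemma:unique} (realized constructively by Algorithm~1a) produces the genuine departures ${\mathbf d}$ and a genuine type ${\mathbf k}^\ast\in{\cal K}$, and for that type the equations of Proposition~\ref{Lemma_departure_function} hold exactly, so ${\mathbf d}=\Theta_{{\mathbf k}^\ast}{\mathbf a}+{\boldsymbol \eta}_{{\mathbf k}^\ast}$. The defining inequalities of $\mathbbm{P}_{{\mathbf k}^\ast}$, namely $a_{k_i}\le d_i\le a_{k_i+1}$, are precisely the interlacing conditions \eqref{eq:KiDef} that define $k^\ast_i$, so ${\mathbf a}\in\mathbbm{P}_{{\mathbf k}^\ast}$ and the pieces cover ${\cal R}$. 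For disjointness I would observe that if ${\mathbf a}$ lies in the interior of some $\mathbbm{P}_{\mathbf k}$ then the associated ${\mathbf d}=\Theta_{\mathbf k}{\mathbf a}+{\boldsymbol \eta}_{\mathbf k}$ satisfies the strict interlacing $a_{k_i}<d_i<a_{k_i+1}$, which pins down the type unambiguously; by the uniqueness of ${\mathbf d}$ no other $\mathbbm{P}_{{\mathbf k}'}$ can contain ${\mathbf a}$ in its interior. Finally, since ${\mathbf a}\mapsto\Theta_{\mathbf k}{\mathbf a}+{\boldsymbol \eta}_{\mathbf k}$ is affine and $a_{k_i},a_{k_i+1}$ are coordinates of ${\mathbf a}$, every constraint defining $\mathbbm{P}_{\mathbf k}$ is affine, so $\mathbbm{P}_{\mathbf k}$ is a polytope (it is ${\cal R}$ intersected with finitely many halfspaces).

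For the objective I would write the total cost \eqref{eq:totalCost} in vector form as $c({\mathbf a})=({\mathbf d}-{\mathbf d}^\ast)'({\mathbf d}-{\mathbf d}^\ast)+\gamma{\mathbf 1}'({\mathbf d}-{\mathbf a})$, substitute ${\mathbf d}=\Theta_{\mathbf k}{\mathbf a}+{\boldsymbol \eta}_{\mathbf k}$, and expand. The quadratic part of $({\mathbf d}-{\mathbf d}^\ast)'({\mathbf d}-{\mathbf d}^\ast)$ contributes ${\mathbf a}'\Theta_{\mathbf k}'\Theta_{\mathbf k}{\mathbf a}$, giving $Q_{\mathbf k}=\Theta_{\mathbf k}'\Theta_{\mathbf k}$; collecting the terms linear in ${\mathbf a}$ from both summands yields ${\mathbf b}_{\mathbf k}=2({\boldsymbol \eta}_{\mathbf k}-{\mathbf d}^\ast)'\Theta_{\mathbf k}+\gamma{\mathbf 1}'(\Theta_{\mathbf k}-I)$, and the ${\mathbf a}$-free remainder $({\boldsymbol \eta}_{\mathbf k}-{\mathbf d}^\ast)'({\boldsymbol \eta}_{\mathbf k}-{\mathbf d}^\ast)+\gamma{\mathbf 1}'{\boldsymbol \eta}_{\mathbf k}$ collects into the constant ${\tilde b}_{\mathbf k}$. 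Convexity is then immediate: $Q_{\mathbf k}=\Theta_{\mathbf k}'\Theta_{\mathbf k}$ is a Gram matrix, hence positive semidefinite (indeed positive definite, since $\Theta_{\mathbf k}$ is invertible), so $c_{\mathbf k}$ is a convex quadratic.

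The routine algebra of this expansion is not the difficulty; the delicate step is the partition, where one must verify that the \emph{abstractly} defined polytope $\mathbbm{P}_{\mathbf k}$ — built from matrices that already presuppose type ${\mathbf k}$ — coincides with the set of ${\mathbf a}$ whose \emph{genuine} dynamics realize type ${\mathbf k}$. The crux is to match the polytope's inequalities against the definitions \eqref{eq:KiDef} of $k_i$ and to invoke Lemma~\ref{lemma:unique}, so that a given ${\mathbf a}$ cannot simultaneously realize two distinct types in the interior of its cell.
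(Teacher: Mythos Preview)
Your proposal is correct and follows essentially the same route as the paper: invoke Proposition~\ref{Lemma_departure_function} to get the affine map ${\mathbf d}=\Theta_{\mathbf k}{\mathbf a}+{\boldsymbol\eta}_{\mathbf k}$ on each piece, identify $\mathbbm{P}_{\mathbf k}$ with the interlacing constraints \eqref{eq:KiDef}, expand the cost, and observe that $Q_{\mathbf k}=\Theta_{\mathbf k}'\Theta_{\mathbf k}$ is a Gram matrix. You are in fact more careful than the paper on the partition step (coverage via Lemma~\ref{lemma:unique}/Algorithm~1a, disjointness of interiors via strict interlacing), and your observation that $\Theta_{\mathbf k}$ is invertible sharpens the convexity conclusion to strict convexity.
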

\begin{proof}
The results of Proposition~\ref{Lemma_departure_function} show that every ${\mathbf k} \in {\cal K}$ specifies matrices $D$ and $A$ such that, ${\mathbf d} = D^{-1} A ~ {\mathbf a} + D^{-1} {\mathbf 1} = \Theta_{\mathbf k} {\mathbf a} + {\boldsymbol \eta}_{\mathbf k}$. This holds with constant $\Theta_{\mathbf k}$ and ${\boldsymbol \eta}_{\mathbf k}$ for all ${\mathbf a}$ and ${\mathbf d}$ for which ${\mathbf k}$ as defined in \eqref{eq:KiDef} is fixed. The polytope $\mathbbm{P}_\mathbf{k}$ specifies this exactly by describing the set of arrival points for which the specific ordering of departures within arrivals is given by ${\mathbf k}$.

Since for all ${\mathbf a} \in\mathbbm{P}_\mathbf{k}$ the affine relationship between ${\mathbf a}$ and ${\mathbf d}$ holds with the same $\Theta_{\mathbf k}$ and ${\boldsymbol \eta}_{\mathbf k}$  the cost, \eqref{eq:totalCost}, can be explicitly represented in terms of ${\mathbf a}$:
\begin{align*}
c({\mathbf a}) &= \sum_{i\in\mathcal{N}} (d_i - d_i^*)^2 + \gamma(d_i-a_i)
\\
&= ({\mathbf d}-{\mathbf d}^*)' \,({\mathbf d}-{\mathbf d}^*) + \gamma {\mathbf 1}' \, ({\mathbf d} - {\mathbf a}) \\
&=\big({\mathbf a}' \Theta'_{\mathbf k} + ({\boldsymbol \eta}_{\mathbf k} -{\mathbf d}^{*})') \,( \Theta_{\mathbf k} {\mathbf a} + {\boldsymbol \eta}_{\mathbf k}-{\mathbf d}^*) + \gamma {\mathbf 1}'  \,( \Theta_{\mathbf k} {\mathbf a} + {\boldsymbol \eta}_{\mathbf k} - {\mathbf a}) \\
& = {\mathbf a}' \Theta'_{\mathbf k} \Theta_{\mathbf k} {\mathbf a} + \big(2 \big({\boldsymbol \eta}_{\mathbf k} - {\mathbf d}^*)'\Theta_{\mathbf k} + \gamma {\mathbf 1}'(\Theta_{\mathbf k} -I) \big)  {\mathbf a}
+ ({\boldsymbol \eta}_{\mathbf k} -{\mathbf d}^{*})' ({\boldsymbol \eta}_{\mathbf k} -{\mathbf d}^{*})+\gamma {\mathbf 1}'{\boldsymbol \eta}_{\mathbf k}.
\end{align*}
This yields $Q_{\mathbf k}$, ${\mathbf b}_{\mathbf k}$ and the constant term, $\tilde{b}_{\mathbf k}$. Finally, since $Q_{\mathbf k}$ is a Gram matrix, it is positive semi-definite. Hence the objective is convex.
\qed\end{proof}

\subsection{Exhaustive Search}
We are now faced with a family of convex quadratic programs. For each ${\mathbf k} \in {\cal K}$, denote $c_{\mathbf k}(\cdot)$ to be the cost associated with ${\mathbf k}$ then,
\begin{equation}\label{eq:QP(k)}
QP({\mathbf k}): \qquad \min_{{\mathbf a} \in\mathbbm{P}_\mathbf{k}} c_{\mathbf k}({\mathbf a}).	
\end{equation}

Note that while the constant term $ \tilde{b}_{\mathbf k}$ is not required for finding the solution of $QP({\mathbf k})$, it is needed for comparing the outcomes of the quadratic programs associated with different elements of ${\cal K}$. Indeed the most basic use of $QP({\mathbf k})$ is for an exhaustive search algorithm which finds the global optimal schedule in finite time. This is summarised in Algorithm~2.

\begin{algorithm}
\renewcommand{\thealgorithm}{}
\caption{\textbf{2}: Exhaustive search for global optimum}
\textbf{Input}: Model parameters only 
($N,\alpha,\beta,\mathbf{d}^*$ and $\gamma$) \\
\textbf{Output}: $\mathbf{a}^*$ (global optimum)
\begin{algorithmic}
\State init $m^*=\infty$
\For{$\mathbf{k}\in\mathcal{K}$}
	\State solve  $QP({\mathbf k})$ with optimiser ${\mathbf a}$ and optimum $m$
	\If{$m<m^*$}
		\State set $\mathbf{a}^*=\mathbf{a}$
		\State set $m^*=m$
	\EndIf
\EndFor
\State \textbf{return} {$(\mathbf{a}^*,\ m^*)$}
\end{algorithmic}
\end{algorithm}

The virtue of Algorithm~2 is that it finds the optimal schedule in finite time. But this is done by solving an exponential (in $N$) number of convex $QP(\cdot)$ problems, so for non-small $N$ it is not a sensible algorithm. Hence we now introduce a search heuristic.
\subsection{Neighbour Search}
In this section we introduce a heuristic search aimed at finding a local minimum by searching on neighbouring regions. The search procedure solves the QP \eqref{eq:QP(k)} over neighbouring elements of $\mathcal{K}$ by changing a single coordinate of $\mathbf{k}$ at a time. We prove that this procedure converges to a local minimum; yet this may possibly take an exponential number of steps in the worst case.

Given a solution ${\boldsymbol a}$ of $QP({\boldsymbol k})$ we define the following two sets of indices:
\begin{align*}
{\cal I}_1(\mathbf{a},\, \mathbf{k})&:=\big\lbrace j\in {\cal N}
\,:\,
[\Theta_{\mathbf k} \mathbf{a}+{\boldsymbol \eta}_{\mathbf k} ]_j= a_{k_j+1} \big\rbrace,
\\
{\cal I}_2(\mathbf{a},\, \mathbf{k})&:=\big\lbrace j\in {\cal N} \,:\,
a_{k_{j}} = [\Theta_{\mathbf k} \mathbf{a}+{\boldsymbol \eta}_{\mathbf k} ]_{j}  \big\rbrace.
\end{align*}

Noting that $d_i =[\Theta_{\mathbf k} \mathbf{a}+{\boldsymbol \eta}_{\mathbf k} ]_i$, and recalling that $k_i$ is index of the maximal arrival time that is less than or equal to $d_i$ we have that if $i \in {\cal I}_1(\mathbf{a},\, \mathbf{k})$ then the optimal solution of $QP({\boldsymbol k})$ exhibits $d_i =a_{k_i+1}$ as an active constraint. Hence a neighbouring region to the constraint set $\mathbbm{P}_\mathbf{k}$ is ${\mathbbm P}_\mathbf{k^{(i)}}$ where $\mathbf{k}^{(i)}=\mathbf{k}$ on all coordinates except for $i$ where it is equal to $k_i+1$.  Similarly if $i \in {\cal I}_2(\mathbf{a},\, \mathbf{k})$ then $a_{k_i} =d_i$ as an active constraint. In this case, $\mathbf{k}^{(i)}$ is set to equal $\mathbf{k}$ on all co-ordinates except for $i$ where it is set to equal $k_i-1$.  Thus for every element of ${\cal I}_1(\mathbf{a},\, \mathbf{k})$ and ${\cal I}_2(\mathbf{a},\, \mathbf{k})$ we have a well defined neighbouring region.  Defining now the sets of neighbouring regions to $\mathbbm{P}_\mathbf{k}$ by
 \[
 {\cal K}_\ell\big({\cal I}_\ell(\mathbf{a},\, \mathbf{k})\big) 
 :=
 \big\{ {\mathbf k}^{(i)} ~:~ i \in {\cal I}_\ell(\mathbf{a},\, \mathbf{k}) \big\}
 , \quad \ell=1,2,
 \]
we have the following local search algorithm:

\begin{algorithm}[H]
\renewcommand{\thealgorithm}{}
\caption{\textbf{3}: Neighbour search for local optimum (local search)}
\label{algo:neighbour}
\textbf{Input}: $\mathbf{k}$\\
\textbf{Output}: $\mathbf{a}^*$ and $m^*$
\begin{algorithmic}
\State  solve  $QP({\mathbf k})$ with optimiser ${\mathbf a}$ and optimum $m$
\State init $m^*=m$
\State init $\mathbf{a}^*=\mathbf{a}$
	\For{$i \in \ {\cal K}_1\big({\cal I}_1(\mathbf{a}, \mathbf{k})\big)$}
		\State solve  $QP({\mathbf k^{(i)}})$ with optimiser ${\mathbf a}$ and optimum $m$
		\State {\bf if} $m<m^*$   {\bf then} restart algorithm with $\mathbf{k}=\mathbf{k^{(i)}}$
		\EndFor
	\For{$i \in \ {\cal K}_2\big({\cal I}_2(\mathbf{a}, \mathbf{k})\big)$}
		\State solve  $QP({\mathbf k^{(i)}})$ with optimiser ${\mathbf a}$ and optimum $m$
			\State {\bf if} $m<m^*$   {\bf then} restart algorithm with $\mathbf{k}=\mathbf{k^{(i)}}$
	\EndFor
\State \textbf{return} {$(\mathbf{a}^*,\ m^*)$}
\end{algorithmic}
\end{algorithm}

\begin{proposition}
Algorithm 3 converges to a local minimum for any initial vector $\mathbf{k}$.
\end{proposition}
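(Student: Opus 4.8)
I would prove two separate claims: that the procedure halts, and that it halts at a genuine local minimum of $c$ over $\mathcal{R}$. Termination is a short monotonicity-plus-finiteness argument; the substance is in characterising the terminal point and upgrading per-polytope optimality into a statement about a full neighbourhood.

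\textbf{Termination.} I would first argue that the incumbent value $m^*$ strictly decreases across restarts. The algorithm restarts with $\mathbf{k}=\mathbf{k}^{(i)}$ only when the optimum of $QP(\mathbf{k}^{(i)})$ is strictly below the current $m^*$. Since each $\mathbf{k}\in\mathcal{K}$ determines a single well-defined optimal value $\min_{\mathbbm{P}_{\mathbf{k}}}c_{\mathbf{k}}$, the values of the current polytopes visited across restarts form a strictly decreasing sequence inside the finite set $\{\min_{\mathbbm{P}_{\mathbf{k}}}c_{\mathbf{k}}:\mathbf{k}\in\mathcal{K}\}$. A strictly decreasing sequence in a finite set is finite (and no polytope can recur as the current one), so there are at most $|\mathcal{K}|$ restarts and the algorithm terminates.

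\textbf{Key consequence of termination.} Let $(\mathbf{a}^*,\mathbf{k})$ be the terminal state with value $m^*$. Then $\mathbf{a}^*$ solves $QP(\mathbf{k})$, and its active constraints are exactly the indices in $\mathcal{I}_1(\mathbf{a}^*,\mathbf{k})$ and $\mathcal{I}_2(\mathbf{a}^*,\mathbf{k})$; each such active constraint is a facet shared with a single-coordinate neighbour $\mathbf{k}^{(i)}$. For every such neighbour, $\mathbf{a}^*$ lies on the shared facet, so $\mathbf{a}^*\in\mathbbm{P}_{\mathbf{k}}\cap\mathbbm{P}_{\mathbf{k}^{(i)}}$; since the departure vector $\mathbf{d}(\mathbf{a}^*)$ is unique (Lemma~\ref{lemma:unique}), both quadratic pieces agree with the true cost there, giving $c_{\mathbf{k}^{(i)}}(\mathbf{a}^*)=c(\mathbf{a}^*)=m^*$. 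Because the algorithm did not restart, $\min_{\mathbbm{P}_{\mathbf{k}^{(i)}}}c_{\mathbf{k}^{(i)}}\ge m^*$; combined with the feasible point $\mathbf{a}^*$ attaining $m^*$ in $\mathbbm{P}_{\mathbf{k}^{(i)}}$, this forces $\mathbf{a}^*$ to be optimal for $QP(\mathbf{k}^{(i)})$ as well. Thus $\mathbf{a}^*$ simultaneously minimises the convex quadratic $c_{\mathbf{k}'}$ over $\mathbbm{P}_{\mathbf{k}'}$ for $\mathbf{k}'=\mathbf{k}$ and for every checked neighbour.

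\textbf{Local minimality and the main obstacle.} The reduction I would then use is purely set-theoretic: if some neighbourhood $U$ of $\mathbf{a}^*$ satisfies $U\cap\mathcal{R}\subseteq\mathbbm{P}_{\mathbf{k}}\cup\bigcup_i\mathbbm{P}_{\mathbf{k}^{(i)}}$, then any $\mathbf{a}\in U\cap\mathcal{R}$ lies in one of these polytopes $\mathbbm{P}_{\mathbf{k}'}$, whence $c(\mathbf{a})=c_{\mathbf{k}'}(\mathbf{a})\ge\min_{\mathbbm{P}_{\mathbf{k}'}}c_{\mathbf{k}'}=m^*=c(\mathbf{a}^*)$, i.e. $\mathbf{a}^*$ is a local minimum. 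The entire difficulty therefore concentrates in this covering claim, which is the step I expect to be the main obstacle. Since altering one coordinate of $\mathbf{k}$ crosses exactly one facet, a direction that simultaneously crosses two or more active facets at $\mathbf{a}^*$ enters a polytope differing from $\mathbf{k}$ in several coordinates, which the algorithm never tests directly; ruling such directions out, or showing that optimality propagates to them, requires a careful description of the tangent cone of $\mathcal{R}$ at $\mathbf{a}^*$ in terms of the active constraints and of how the affine departure map $\Theta_{\mathbf{k}}$ changes across facets. I would resolve this by showing that the active facets, together with the monotonicity constraints defining $\mathcal{K}$, decompose the feasible cone of $\mathcal{R}$ at $\mathbf{a}^*$ exactly into the tangent cones of $\mathbbm{P}_{\mathbf{k}}$ and its single-coordinate neighbours, so that no uncovered ``diagonal'' wedge of feasible directions remains.
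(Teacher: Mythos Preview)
Your termination argument is exactly the paper's: strict decrease of the incumbent value together with finiteness of $\mathcal{K}$ forces a finite number of restarts.

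For local minimality, however, you go considerably further than the paper does. The paper's proof simply asserts that when no neighbouring quadratic program improves on $QP(\mathbf{k})$, the terminal point is a local minimum---either because it is an interior point of $\mathbbm{P}_{\mathbf{k}}$ (no active constraints) or because the neighbouring QPs offer no improvement. It does not carry out the covering argument you identify as the main obstacle, nor does it address the ``diagonal wedge'' concern where several facets are simultaneously active and a perturbation could enter a polytope differing from $\mathbf{k}$ in more than one coordinate. Your observation that $\mathbf{a}^*$ is feasible and optimal for each checked neighbour $QP(\mathbf{k}^{(i)})$, and your proposed tangent-cone decomposition to rule out uncovered directions, constitute a genuinely more rigorous treatment than what the paper provides. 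What you gain is an honest proof of the stated proposition; what the paper gains is brevity, at the cost of leaving the geometric subtlety unexamined.
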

\begin{proof}
Every step of the algorithm can only improve the objective function, since $m<m^*$ is the condition for the change of $\mathbf{k}$, hence the algorithm cannot go back to a region which it has already visited. Furthermore, there is a finite number of regions which means the algorithm terminates in a finite number of steps. If for some $\mathbf{a}$ which is the solution of $QP(\mathbf{k})$ there are no improvements in any of the neighbouring regions the algorithm stops at a local minimum. This can be either due to no active constraints to $QP(\mathbf{k})$ (an interior point) or due to the fact that the neighbouring quadratic programs do not improve on the solution of $QP(\mathbf{k})$.
\qed
\end{proof}

\section{Global Search Over Single Coordinates}
\label{sec:CPIsection}

In this section we put forward Algorithms 4 and 5 that together form a coordinate pivot iteration procedure. We first describe how the dynamics presented in Sections \ref{sec:model} and \ref{sec:optRegion} can be used to find a global minimum with respect to a single coordinate $r \in {\cal N}$ (user) when all other coordinates are fixed. We call this procedure a {\em global search over a single coordinate} $r$. 

The computational complexity of such a procedure is shown to be at most $O(N^5)$. We then utilise this procedure to define a coordinate pivot iteration algorithm, that performs optimization cycles on all of the coordinates until no improvement can be made.

To understand the main idea consider Figure~\ref {fig:departure_arrival_xx}~(a). This figure corresponds to an example with $N=4$, $\alpha=1.5$ and $\beta=5$. Here the arrival times $a_2,\, a_3,\, a_4$ are fixed at $(0.05, 0.15, 0.45)$ and the arrival time of user $r=1$ (denoted also $x$) is allowed to vary. The (horizontal) blue dotted lines denote the fixed arrival times $a_2,\, a_3,\, a_4$. The thin blue curves correspond to the departure times $d_2, \, d_3, \, d_4$. The thick green dotted and solid curves correspond to the arrival and departure time of user $1$ respectively. When $x$ is small enough or large enough, it is seen that user $1$ does not affect the other users. But otherwise, user $1$ interacts with the other users and potentially modifies their departure times. 

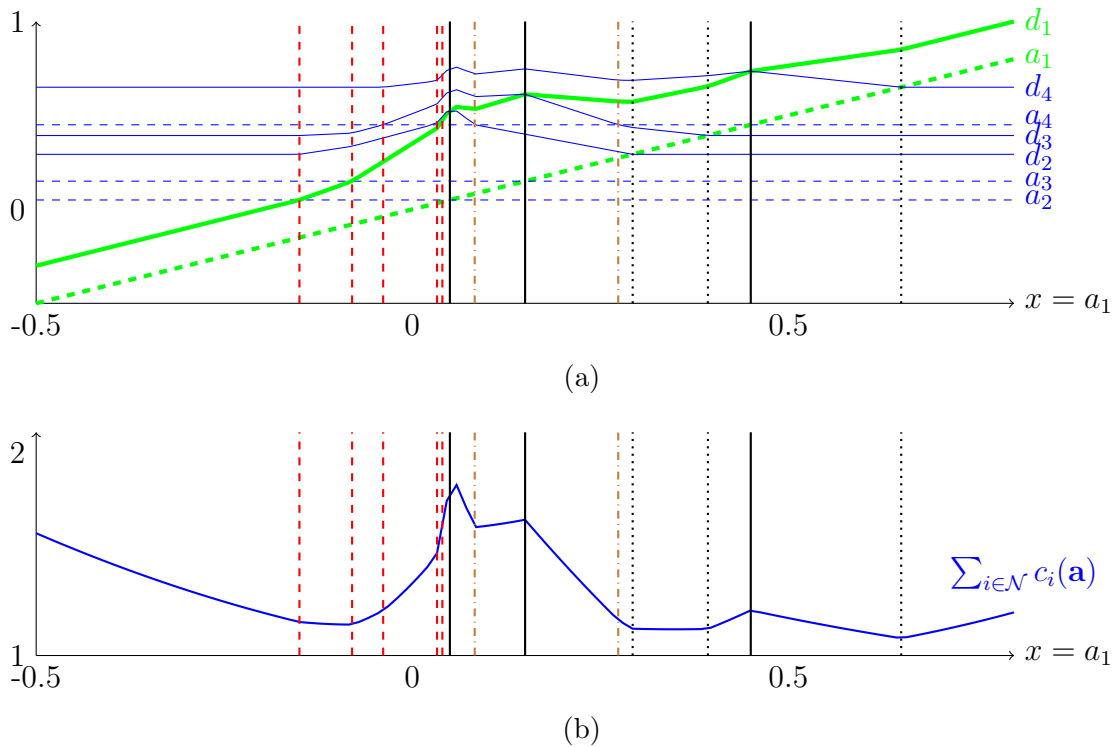
\begin{figure}[H]
\centering
\begin{subfigure}{\columnwidth}
\begin{tikzpicture}[xscale=10,yscale=2.5]
  \def\xmin{-0.5}
  \def\xmax{0.8}
  \def\ymin{-0.5}
  \def\ymax{1}
    \draw[->] (\xmin,\ymin) -- (\xmax,\ymin) node[right] {$x=a_1$} ;
    \draw[->] (\xmin,\ymin) -- (\xmin,\ymax); 
    \foreach \x in {-0.5,0,0.5}
    \node at (\x,\ymin) [below] {\x};
    \foreach \y in {0,1}
    \node at (\xmin,\y) [left] {\y};
    
    \draw[smooth,dashed,green,ultra thick] (-0.5,-0.5)--	(-0.1621,-0.1621)--	(-0.1491,-0.1491)--	(-0.1361,-0.1361)--	(-0.1231,-0.1231)--	(-0.1101,-0.1101)--	(-0.0971,-0.0971)--	(-0.0841,-0.0841)--	(-0.0711,-0.0711)--	(-0.0581,-0.0581)--	(-0.0451,-0.0451)--	(-0.0321,-0.0321)--	(-0.0191,-0.0191)--	(-0.0061,-0.0061)--	(0.0069,0.0069)--	(0.0199,0.0199)--	(0.0329,0.0329)--	(0.0459,0.0459)--	(0.0589,0.0589)--	(0.0719,0.0719)--	(0.0849,0.0849)--	(0.0979,0.0979)--	(0.1109,0.1109)--	(0.1239,0.1239)--	(0.1369,0.1369)--	(0.1499,0.1499)--	(0.1629,0.1629)--	(0.1759,0.1759)--	(0.1889,0.1889)--	(0.2019,0.2019)--	(0.2149,0.2149)--	(0.2279,0.2279)--	(0.2409,0.2409)--	(0.2539,0.2539)--	(0.2669,0.2669)--	(0.2799,0.2799)--	(0.2929,0.2929)--	(0.3059,0.3059)--	(0.3189,0.3189)--	(0.3319,0.3319)--	(0.3449,0.3449)--	(0.3579,0.3579)--	(0.3709,0.3709)--	(0.3839,0.3839)--	(0.3969,0.3969)--	(0.4099,0.4099)--	(0.4229,0.4229)--	(0.4359,0.4359)--	(0.4489,0.4489)--	(0.4619,0.4619)--	(0.4749,0.4749)--	(0.4879,0.4879)--	(0.5009,0.5009)--	(0.5139,0.5139)--	(0.5269,0.5269)--	(0.5399,0.5399)--	(0.5529,0.5529)--	(0.5659,0.5659)--	(0.5789,0.5789)--	(0.5919,0.5919)--	(0.6049,0.6049)--	(0.6179,0.6179)--	(0.6309,0.6309)--	(0.6439,0.6439)--	(0.6569,0.6569)--	(0.6699,0.6699)--	(0.6829,0.6829)--	(0.6959,0.6959)--	(0.7089,0.7089)--	(0.7219,0.7219)--	(0.7349,0.7349)--	(0.7479,0.7479)--	(0.7609,0.7609)--	(0.7739,0.7739)--	(0.7869,0.7869)--	(0.7999,0.7999);
    
     \draw[smooth,dashed,blue] (-0.5,0.05) -- (0.8,0.05);
     
     \draw[smooth,dashed,blue] (-0.5,0.15) -- (0.8,0.15);
     
     \draw[smooth,dashed,blue] (-0.5,0.45) -- (0.8,0.45);

    \draw[smooth,green,ultra thick] (-0.5,-0.3)--	(-0.1621,0.0379)--	(-0.1491,0.05128571)--	(-0.1361,0.06985714)--	(-0.1231,0.08842857)--	(-0.1101,0.107)--	(-0.0971,0.12557143)--	(-0.0841,0.14414286)--	(-0.0711,0.17225)--	(-0.0581,0.20475)--	(-0.0451,0.23725)--	(-0.0321,0.26975)--	(-0.0191,0.30225)--	(-0.0061,0.33475)--	(0.0069,0.36725)--	(0.0199,0.39975)--	(0.0329,0.43225)--	(0.0459,0.509)--	(0.0589,0.54555)--	(0.0719,0.53905)--	(0.0849,0.53519375)--	(0.0979,0.55063125)--	(0.1109,0.56606875)--	(0.1239,0.58150625)--	(0.1369,0.59694375)--	(0.1499,0.61238125)--	(0.1629,0.60846875)--	(0.1759,0.60440625)--	(0.1889,0.60034375)--	(0.2019,0.59628125)--	(0.2149,0.59221875)--	(0.2279,0.58815625)--	(0.2409,0.58409375)--	(0.2539,0.58003125)--	(0.2669,0.57596875)--	(0.2799,0.57337449)--	(0.2929,0.57248397)--	(0.3059,0.58309621)--	(0.3189,0.59370845)--	(0.3319,0.6043207)--	(0.3449,0.61493294)--	(0.3579,0.62554519)--	(0.3709,0.63615743)--	(0.3839,0.64676968)--	(0.3969,0.65985714)--	(0.4099,0.67842857)--	(0.4229,0.697)--	(0.4359,0.71557143)--	(0.4489,0.73414286)--	(0.4619,0.74251429)--	(0.4749,0.74994286)--	(0.4879,0.75737143)--	(0.5009,0.7648)--	(0.5139,0.77222857)--	(0.5269,0.77965714)--	(0.5399,0.78708571)--	(0.5529,0.79451429)--	(0.5659,0.80194286)--	(0.5789,0.80937143)--	(0.5919,0.8168)--	(0.6049,0.82422857)--	(0.6179,0.83165714)--	(0.6309,0.83908571)--	(0.6439,0.84651429)--	(0.6569,0.8569)--	(0.6699,0.8699)--	(0.6829,0.8829)--	(0.6959,0.8959)--	(0.7089,0.9089)--	(0.7219,0.9219)--	(0.7349,0.9349)--	(0.7479,0.9479)--	(0.7609,0.9609)--	(0.7739,0.9739)--	(0.7869,0.9869)--	(0.7999,0.9999);

\draw[smooth,blue] (-0.5,0.2928571)--	(-0.1621,0.2928571)--	(-0.1491,0.2934082)--	(-0.1361,0.3013673)--	(-0.1231,0.3093265)--	(-0.1101,0.3172857)--	(-0.0971,0.3252449)--	(-0.0841,0.3332041)--	(-0.0711,0.34525)--	(-0.0581,0.3591786)--	(-0.0451,0.3731071)--	(-0.0321,0.3870357)--	(-0.0191,0.4009643)--	(-0.0061,0.4148929)--	(0.0069,0.4288214)--	(0.0199,0.44275)--	(0.0329,0.4616875)--	(0.0459,0.51925)--	(0.0589,0.5233)--	(0.0719,0.4843)--	(0.0849,0.448825)--	(0.0979,0.439075)--	(0.1109,0.429325)--	(0.1239,0.419575)--	(0.1369,0.409825)--	(0.1499,0.400075)--	(0.1629,0.390325)--	(0.1759,0.380575)--	(0.1889,0.370825)--	(0.2019,0.361075)--	(0.2149,0.351325)--	(0.2279,0.341575)--	(0.2409,0.331825)--	(0.2539,0.322075)--	(0.2669,0.312325)--	(0.2799,0.302575)--	(0.2929,0.2928571)--	(0.3059,0.2928571)--	(0.3189,0.2928571)--	(0.3319,0.2928571)--	(0.3449,0.2928571)--	(0.3579,0.2928571)--	(0.3709,0.2928571)--	(0.3839,0.2928571)--	(0.3969,0.2928571)--	(0.4099,0.2928571)--	(0.4229,0.2928571)--	(0.4359,0.2928571)--	(0.4489,0.2928571)--	(0.4619,0.2928571)--	(0.4749,0.2928571)--	(0.4879,0.2928571)--	(0.5009,0.2928571)--	(0.5139,0.2928571)--	(0.5269,0.2928571)--	(0.5399,0.2928571)--	(0.5529,0.2928571)--	(0.5659,0.2928571)--	(0.5789,0.2928571)--	(0.5919,0.2928571)--	(0.6049,0.2928571)--	(0.6179,0.2928571)--	(0.6309,0.2928571)--	(0.6439,0.2928571)--	(0.6569,0.2928571)--	(0.6699,0.2928571)--	(0.6829,0.2928571)--	(0.6959,0.2928571)--	(0.7089,0.2928571)--	(0.7219,0.2928571)--	(0.7349,0.2928571)--	(0.7479,0.2928571)--	(0.7609,0.2928571)--	(0.7739,0.2928571)--	(0.7869,0.2928571)--	(0.7999,0.2928571);

\draw[smooth,blue] (-0.5,0.3928571)--	(-0.1621,0.3928571)--	(-0.1491,0.3930224)--	(-0.1361,0.3954102)--	(-0.1231,0.397798)--	(-0.1101,0.4001857)--	(-0.0971,0.4025735)--	(-0.0841,0.4049612)--	(-0.0711,0.41525)--	(-0.0581,0.4291786)--	(-0.0451,0.4431071)--	(-0.0321,0.460051)--	(-0.0191,0.479949)--	(-0.0061,0.4998469)--	(0.0069,0.5197449)--	(0.0199,0.5396429)--	(0.0329,0.5616875)--	(0.0459,0.61925)--	(0.0589,0.63665)--	(0.0719,0.61715)--	(0.0849,0.6002937)--	(0.0979,0.6027312)--	(0.1109,0.6051687)--	(0.1239,0.6076062)--	(0.1369,0.6100437)--	(0.1499,0.6124813)--	(0.1629,0.5955687)--	(0.1759,0.5785062)--	(0.1889,0.5614437)--	(0.2019,0.5443812)--	(0.2149,0.5273188)--	(0.2279,0.5102562)--	(0.2409,0.4931938)--	(0.2539,0.4761313)--	(0.2669,0.4590687)--	(0.2799,0.4454321)--	(0.2929,0.4356959)--	(0.3059,0.4301245)--	(0.3189,0.4245531)--	(0.3319,0.4189816)--	(0.3449,0.4134102)--	(0.3579,0.4078388)--	(0.3709,0.4022673)--	(0.3839,0.3966959)--	(0.3969,0.3928571)--	(0.4099,0.3928571)--	(0.4229,0.3928571)--	(0.4359,0.3928571)--	(0.4489,0.3928571)--	(0.4619,0.3928571)--	(0.4749,0.3928571)--	(0.4879,0.3928571)--	(0.5009,0.3928571)--	(0.5139,0.3928571)--	(0.5269,0.3928571)--	(0.5399,0.3928571)--	(0.5529,0.3928571)--	(0.5659,0.3928571)--	(0.5789,0.3928571)--	(0.5919,0.3928571)--	(0.6049,0.3928571)--	(0.6179,0.3928571)--	(0.6309,0.3928571)--	(0.6439,0.3928571)--	(0.6569,0.3928571)--	(0.6699,0.3928571)--	(0.6829,0.3928571)--	(0.6959,0.3928571)--	(0.7089,0.3928571)--	(0.7219,0.3928571)--	(0.7349,0.3928571)--	(0.7479,0.3928571)--	(0.7609,0.3928571)--	(0.7739,0.3928571)--	(0.7869,0.3928571)--	(0.7999,0.3928571);

\draw[smooth,blue] (-0.5,0.65)--	(-0.1621,0.65)--	(-0.1491,0.65)--	(-0.1361,0.65)--	(-0.1231,0.65)--	(-0.1101,0.65)--	(-0.0971,0.65)--	(-0.0841,0.65)--	(-0.0711,0.65)--	(-0.0581,0.65)--	(-0.0451,0.65)--	(-0.0321,0.6530153)--	(-0.0191,0.6589847)--	(-0.0061,0.6649541)--	(0.0069,0.6709235)--	(0.0199,0.6768929)--	(0.0329,0.6870125)--	(0.0459,0.73925)--	(0.0589,0.75665)--	(0.0719,0.73715)--	(0.0849,0.7206462)--	(0.0979,0.7260087)--	(0.1109,0.7313712)--	(0.1239,0.7367337)--	(0.1369,0.7420963)--	(0.1499,0.7474588)--	(0.1629,0.7412112)--	(0.1759,0.7348737)--	(0.1889,0.7285362)--	(0.2019,0.7221988)--	(0.2149,0.7158613)--	(0.2279,0.7095238)--	(0.2409,0.7031863)--	(0.2539,0.6968488)--	(0.2669,0.6905112)--	(0.2799,0.6870123)--	(0.2929,0.6867452)--	(0.3059,0.6899289)--	(0.3189,0.6931125)--	(0.3319,0.6962962)--	(0.3449,0.6994799)--	(0.3579,0.7026636)--	(0.3709,0.7058472)--	(0.3839,0.7090309)--	(0.3969,0.7129571)--	(0.4099,0.7185286)--	(0.4229,0.7241)--	(0.4359,0.7296714)--	(0.4489,0.7352429)--	(0.4619,0.7306143)--	(0.4749,0.7250429)--	(0.4879,0.7194714)--	(0.5009,0.7139)--	(0.5139,0.7083286)--	(0.5269,0.7027571)--	(0.5399,0.6971857)--	(0.5529,0.6916143)--	(0.5659,0.6860429)--	(0.5789,0.6804714)--	(0.5919,0.6749)--	(0.6049,0.6693286)--	(0.6179,0.6637571)--	(0.6309,0.6581857)--	(0.6439,0.6526143)--	(0.6569,0.65)--	(0.6699,0.65)--	(0.6829,0.65)--	(0.6959,0.65)--	(0.7089,0.65)--	(0.7219,0.65)--	(0.7349,0.65)--	(0.7479,0.65)--	(0.7609,0.65)--	(0.7739,0.65)--	(0.7869,0.65)--	(0.7999,0.65);
     
     \draw[smooth,black,thick] (0.05,-0.5) -- (0.05,1);
     \draw[smooth,black,thick] (0.15,-0.5) -- (0.15,1);
     \draw[smooth,black,thick] (0.45,-0.5) -- (0.45,1);
     
     \draw[smooth,dotted,black,thick] (0.293,-0.5) -- (0.293,1);
     \draw[smooth,dotted,black,thick] (0.393,-0.5) -- (0.393,1);
     \draw[smooth,dotted,black,thick] (0.65,-0.5) -- (0.65,1);
     
     \draw[smooth,dashed,red,thick] (-0.15,-0.5) -- (-0.15,1);
     \draw[smooth, dashed,red,thick] (-0.08,-0.5) -- (-0.08,1);
     \draw[smooth, dashed,red,thick] (-0.0387,-0.5) -- (-0.0387,1);
	 \draw[smooth, dashed,red,thick] (0.0329,-0.5) -- (0.0329,1);     
     \draw[smooth, dashed,red,thick] (0.04,-0.5) -- (0.04,1);     
     
     \draw[smooth, dashdotted,brown,thick] (0.083,-0.5) -- (0.083,1);
     \draw[smooth, dashdotted,brown,thick] (0.2738,-0.5) -- (0.2738,1);
     
     \draw[] (0.8,0.8) node[right,green] {$a_1$};
     \draw[] (0.8,1) node[right,green] {$d_1$};
     \draw[] (0.8,0.05) node[right,blue] {$a_2$};
     \draw[] (0.8,0.283) node[right,blue] {$d_2$};
     \draw[] (0.8,0.15) node[right,blue] {$a_3$};
     \draw[] (0.8,0.393) node[right,blue] {$d_3$};
     \draw[] (0.8,0.48) node[right,blue] {$a_4$};
     \draw[] (0.8,0.65) node[right,blue] {$d_4$};
\end{tikzpicture}
\caption{}\label{fig:departure_arrival_a}
\end{subfigure}

 \bigskip

\begin{subfigure}{\columnwidth}
\begin{tikzpicture}[xscale=10,yscale=2.7]
  \def\xmin{-0.5}
  \def\xmax{0.8}
  \def\ymin{1}
  \def\ymax{2.1}
    \draw[->] (\xmin,\ymin) -- (\xmax,\ymin) node[right] {$x=a_1$} ;
    \draw[->] (\xmin,\ymin) -- (\xmin,\ymax); 
    \foreach \x in {-0.5,0,0.5}
    \node at (\x,\ymin) [below] {\x};
    \foreach \y in {1,2}
    \node at (\xmin,\y) [left] {\y};
    
    \draw[smooth,blue, thick] (-0.5,1.602602)--	(-0.4871,1.582128)--	(-0.4741,1.561833)--	(-0.4611,1.541875)--	(-0.4481,1.522256)--	(-0.4351,1.502974)--	(-0.4221,1.48403)--	(-0.4091,1.465425)--	(-0.3961,1.447157)--	(-0.3831,1.429228)--	(-0.3701,1.411636)--	(-0.3571,1.394382)--	(-0.3441,1.377467)--	(-0.3311,1.360889)--	(-0.3181,1.34465)--	(-0.3051,1.328748)--	(-0.2921,1.313184)--	(-0.2791,1.297959)--	(-0.2661,1.283071)--	(-0.2531,1.268522)--	(-0.2401,1.25431)--	(-0.2271,1.240436)--	(-0.2141,1.226901)--	(-0.2011,1.213703)--	(-0.1881,1.200844)--	(-0.1751,1.188322)--	(-0.1621,1.176138)--	(-0.1491,1.164785)--	(-0.1361,1.160652)--	(-0.1231,1.157346)--	(-0.1101,1.154868)--	(-0.0971,1.153218)--	(-0.0841,1.152396)--	(-0.0711,1.1649)--	(-0.0581,1.185726)--	(-0.0451,1.20944)--	(-0.0321,1.242738)--	(-0.0191,1.285839)--	(-0.0061,1.332305)--	(0.0069,1.382133)--	(0.0199,1.435326)--	(0.0329,1.504574)--	(0.0459,1.762763)--	(0.0589,1.84041)--	(0.0719,1.727486)--	(0.0849,1.63266)--	(0.0979,1.638455)--	(0.1109,1.644987)--	(0.1239,1.652255)--	(0.1369,1.660259)--	(0.1499,1.668999)--	(0.1629,1.613784)--	(0.1759,1.558953)--	(0.1889,1.505008)--	(0.2019,1.451948)--	(0.2149,1.399775)--	(0.2279,1.348487)--	(0.2409,1.298084)--	(0.2539,1.248568)--	(0.2669,1.199937)--	(0.2799,1.160806)--	(0.2929,1.132053)--	(0.3059,1.130875)--	(0.3189,1.130005)--	(0.3319,1.129443)--	(0.3449,1.129188)--	(0.3579,1.129241)--	(0.3709,1.129601)--	(0.3839,1.130269)--	(0.3969,1.136921)--	(0.4099,1.156751)--	(0.4229,1.177332)--	(0.4359,1.198665)--	(0.4489,1.22075)--	(0.4619,1.213327)--	(0.4749,1.203303)--	(0.4879,1.193453)--	(0.5009,1.183774)--	(0.5139,1.174268)--	(0.5269,1.164935)--	(0.5399,1.155774)--	(0.5529,1.146785)--	(0.5659,1.137969)--	(0.5789,1.129326)--	(0.5919,1.120854)--	(0.6049,1.112556)--	(0.6179,1.104429)--	(0.6309,1.096475)--	(0.6439,1.088694)--	(0.6569,1.08998)--	(0.6699,1.099428)--	(0.6829,1.109214)--	(0.6959,1.119339)--	(0.7089,1.129801)--	(0.7219,1.140602)--	(0.7349,1.15174)--	(0.7479,1.163216)--	(0.7609,1.175031)--	(0.7739,1.187183)--	(0.7869,1.199674)--	(0.7999,1.212502);

	 \draw[smooth,black,thick] (0.05,\ymin) -- (0.05,\ymax);
     \draw[smooth,black,thick] (0.15,\ymin) -- (0.15,\ymax);
     \draw[smooth,black,thick] (0.45,\ymin) -- (0.45,\ymax);
     
     \draw[smooth,dotted,black,thick] (0.293,\ymin) -- (0.293,\ymax);
     \draw[smooth,dotted,black,thick] (0.393,\ymin) -- (0.393,\ymax);
     \draw[smooth,dotted,black,thick] (0.65,\ymin) -- (0.65,\ymax);
     
     \draw[smooth,dashed,red,thick] (-0.15,\ymin) -- (-0.15,\ymax);
     \draw[smooth, dashed,red,thick] (-0.08,\ymin) -- (-0.08,\ymax);
     \draw[smooth, dashed,red,thick] (-0.0387,\ymin) -- (-0.0387,\ymax);
	 \draw[smooth, dashed,red,thick] (0.0329,\ymin) -- (0.0329,\ymax);     
     \draw[smooth, dashed,red,thick] (0.04,\ymin) -- (0.04,\ymax);     
     
     \draw[smooth, dashdotted,brown,thick] (0.083,\ymin) -- (0.083,\ymax);
     \draw[smooth, dashdotted,brown,thick] (0.2738,\ymin) -- (0.2738,\ymax);        

  	\draw[] (0.7,1.4) node[right,blue] {$\sum_{i\in\mathcal{N}}c_i(\mathbf{a})$};	

\end{tikzpicture}
\caption{}\label{fig:departure_arrival_xx}
\end{subfigure}

\caption{
{\bf (a)} Arrival (horizontal dotted) and departure (horizontal solid) profiles obtained by changing the arrival time of user $1$. {\bf (b)} Cost function obtained by changing the arrival time of user $1$. Break points are marked in both (a) and (b) by vertical lines as follows: Solid black lines mark Type 1a points (note there are exactly $N-1=3$ such breakpoints). Dotted black lines mark Type 1b breakpoints (note that there are exactly $N-1=3$ such breakpoints as well). Type 2a breakpoints are marked by dashed red lines and Type 2b breakpoints are marked by brown dashed-dotted lines.
\label{fig:departure_arrival_xx}}
\end{figure}

As is further evident from Figure~\ref{fig:departure_arrival_xx}~(a), the dynamics of the departure times are piecewise affine with breakpoints as marked by the vertical lines in the figure. In between these lines, the effect of changing $x$ on other quantities is affine. In between these breakpoints, the objective function is piecewise convex (quadratic). This property is illustrated in Figure~\ref{fig:departure_arrival_xx}~(b) where the objective is plotted as a function of $x$. This property allows us to optimise globally over a single coordinate, utilizing the problem structure. The desired departure times used for the cost function in (b) were $d^*_i=0.5$ for $i=1,\ldots,4$.

The global search over a single coordinate works by varying $x$ from $\underline{a}$ to $\overline{a}$ and in the process searches for the one-coordinate optimum. This is done with a finite number of steps because of the piecewise-affine dynamics. Our algorithm incrementally computes the piecewise-affine coefficients within these steps. We call each step a ``breakpoint''. The following types of breakpoints may occur:

\begin{description}
\item {\bf Type 1a:} The arrival of $r$ overtakes the next arrival of any $i$\\
(solid black line).
\item {\bf Type 1b:} The departure of any $i$ is overtaken by the arrival of $r$\\
(dotted black line).
\item {\bf Type 2a:} The departure of any $i$ overtakes any arrival\\
(dashed red line).
\item {\bf Type 2b:} The departure of any $i$ is overtaken by an arrival of $j\neq r$\\
(brown dashed-dotted line).
\end{description}

Observe that in varying $x$, breakpoints of type 1a and 1b occur exactly $N-1$ times each. Less trivially, we have a bound on the number of type 2a and 2b breakpoints:

\begin{proposition}\label{prop:cpi_breakpoints}
In executing the global search over a single coordinate $r$, the total number of breakpoints is  $O(N^3)$. 
\end{proposition}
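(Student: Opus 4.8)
The plan is to show that every breakpoint is a coincidence between a departure time and an arrival time, to set aside the ``trivial'' coincidences involving the moving coordinate $a_r$ (which the text already counts as exactly $N-1$ of each of Type 1a and Type 1b), and then to bound the remaining Type 2a/2b coincidences by a monotonicity argument carried out on ``macro-intervals''.

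\textbf{Step 1 (all breakpoints are departure--arrival coincidences).} As $x=a_r$ sweeps from $\underline a$ to $\overline a$, the vector $\mathbf a$ traces a line segment in $\cal R$, and by \eqref{eq:daMatrix} the departures evolve as $\mathbf d = \Theta_{\mathbf k}\mathbf a + {\boldsymbol \eta}_{\mathbf k}$ with constant coefficients on each polytope $\mathbbm{P}_{\mathbf k}$. A breakpoint is exactly a point where the segment crosses a facet of some $\mathbbm{P}_{\mathbf k}$, and every facet has the form $d_i=a_{k_i}$ or $d_i=a_{k_i+1}$; hence each breakpoint is an event $d_i(x)=a_j(x)$, where $a_j(x)=a_j$ is fixed for $j\neq r$ and $a_r(x)=x$. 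Type 1a and 1b are the events involving $a_r$ itself, already counted as $O(N)$ in total, so it remains to bound the Type 2a/2b events $d_i(x)=a_j$ with $j\neq r$. I will record the key bookkeeping fact that each such event is a unit change of the index $k_i$ of \eqref{eq:KiDef}: a Type 2a event increments $k_i$ (a departure overtakes one more fixed arrival) and a Type 2b event decrements it.

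\textbf{Step 2 (macro-intervals and the monotonicity lemma).} The $2(N-1)$ Type 1 breakpoints partition $[\underline a,\overline a]$ into $O(N)$ sub-intervals, which I call macro-intervals; on a macro-interval the rank of $a_r$ among the arrivals and the sign of each $d_i-a_r$ are frozen. The engine of the proof is the lemma that \emph{on each macro-interval every departure function $d_i(\cdot)$ is monotone in $x$}, equivalently each $k_i(\cdot)$ is monotone. Granting this, on a fixed macro-interval $k_i$ moves monotonically within $\{1,\dots,N\}$ and therefore changes at most $N-1$ times, so the number of Type 2 events on that macro-interval is at most $\sum_{i\in\cal N}(N-1)=O(N^2)$; multiplying by the $O(N)$ macro-intervals and adding the $O(N)$ Type 1 events yields the claimed $O(N^3)$ bound.

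\textbf{Step 3 (the monotonicity lemma --- the main obstacle).} This is the crux. On a macro-interval the only transitions are Type 2, across which $\mathbf k,\mathbf h$ (and hence $D,A,\Theta_{\mathbf k}$) change, so $d_i(x)$ is affine with slope $(\Theta_{\mathbf k})_{ir}=(D^{-1}A)_{ir}$ on each piece, and monotonicity amounts to showing this slope never changes sign as $\mathbf k$ varies through the Type 2 transitions of the macro-interval. I would establish this from the triangular structure of $D$ and $A$ in Proposition~\ref{Lemma_departure_function} together with the cascade reading of \eqref{eq:dRecursion}: an increase of $x$ by $\delta$ shifts $d_r$ in one fixed direction, and this perturbation propagates only to users overlapping $r$, entering their equations with a fixed sign through the $-\alpha$ off-diagonal couplings, so the induced change in each $d_i$ should inherit a consistent sign over the whole macro-interval. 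The delicate point I expect to be hardest is ruling out a sign reversal of $(D^{-1}A)_{ir}$ caused by feedback in the coupled recursion (a later user slowing an earlier one whose departure then re-enters downstream equations), that is, showing precisely that freezing the order of $a_r$ relative to the other events on a macro-interval is what prevents the cascade from reversing direction.
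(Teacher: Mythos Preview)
Your Step 3 monotonicity lemma is false, and the counterexample is the paper's own Figure~\ref{fig:departure_arrival_xx}. Take the macro-interval $x\in(a_2,a_3)=(0.05,0.15)$: there is no Type~1 breakpoint inside it, yet the curve $d_3(x)$ first decreases (slope $-1.5$) and then increases (slope $\approx 0.19$), the sign flip happening at the Type~2b breakpoint $x\approx 0.083$ where $d_2$ drops below $a_4$. One can check this directly from \eqref{eq:theta_single}: before the event $k_1=4$ gives $\theta_1=-3$, $\theta_{\pi_r}=\theta_2=-\tfrac12$, hence $\theta_3\propto\theta_1+\theta_2<0$; after the event $k_1=3$ gives $\theta_1=-\tfrac34$, $\theta_2=\tfrac{19}{16}$, hence $\theta_3>0$. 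So freezing the rank of $a_r$ and the signs of $d_i-a_r$ does \emph{not} freeze the sign of $(\Theta_{\mathbf k})_{ir}$; the ``feedback'' you flag as the delicate point is exactly what breaks the argument. In particular even $d_r$ is not monotone on that macro-interval (its slope goes from $-\tfrac12$ to $+\tfrac{19}{16}$), so the intuitive picture that moving $a_r$ forward pushes $d_r$ forward fails here.

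The paper's proof does share your high-level shape---bound the number of sign changes of each slope $\theta_i$, then bound crossings per monotone stretch---but it does \emph{not} tie sign changes to Type~1 boundaries. Instead Lemma~\ref{Lemma_coeff_decreasing} shows $\theta_i\le 0$ once $x>a_i$, and Lemma~\ref{Lemma_coeff_changes} shows by induction on the recursion in \eqref{eq:theta_single} that $\theta_i$ changes sign at most $i-1$ times over the \emph{entire} sweep (each sign change of $\theta_i$ for $x<a_i$ is inherited from a sign change of some $\theta_j$, $j<i$). Between consecutive sign changes $d_i$ is monotone and can cross at most $N-i$ of the later fixed arrivals, giving $\sum_{i=2}^N(N-i)(i-1)=O(N^3)$. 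Your macro-interval decomposition can be salvaged only if you replace the false monotonicity claim by this global sign-change count.
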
 
Before presenting the proof, we present the details of the piecewise-affine dynamics and the details of the global search over a single coordinate $r$ algorithm.

\subsection{Algorithm Details}
\label{sec:algDetails1}

In carrying out the global search over a single coordinate $r$, we remove the restriction that arrival times are ordered. That is, the search region is extended from ${\cal R}$ to a set not requiring such order $\widetilde{\cal R} := [\underline{a},\,\overline{a}]^N$.
This allows us to carry out a full search for the optimum with respect to a single user $r$ without the restriction $a_{r} \in [a_{r-1},\, a_{r+1}]$. This broader search potentially enables bigger gains in the objective when integrating the algorithm within a search heuristic. Further, any point ${\mathbf a} \in \widetilde{\cal R}$ can be  mapped into a unique point ${\cal O}({\mathbf a})  \in {\cal R}$ where ${\cal O}(\cdot)$ is an ordering operator. By Lemma~\ref{Lemma:optimal_order} we have that $ c\big( {\cal O}({\mathbf a}) \big) \le c\big( {\mathbf a} \big)$.

Take $\widetilde{\mathbf a} \in \widetilde{\cal R}$ as an initial arrival vector and suppose that we are optimising over user $r$. Let $x \in  [\underline{a},\,\overline{a}]$ be the immediate search value of $a_r$ (keeping the other arrival times fixed). For any such $x$ we define a corresponding permutation ${\boldsymbol \pi}(\widetilde{\mathbf a}, \, x)$ indicating the current order of arrivals, as well as the ordered arrival vector
\begin{equation}
\label{eq:325}
{\mathbf a}(\widetilde{a}, \, x) := {\cal O} \big(  
a_{\pi_1(x)},\, \ldots, a_{\pi_{r}(x)-1},
x,\,
a_{\pi_{r}(x)+1},\, \ldots, a_{\pi_N(x)}
\big).
\end{equation}
This vector can serve as input to Algorithm~1a yielding a corresponding ${\mathbf d}(\widetilde{a}, \, x)$, ${\mathbf k}(\widetilde{a}, \, x)$ and ${\mathbf h}(\widetilde{a}, \, x)$. Furthermore, using \eqref{eq:dRecursion} we have the local piecewise-affine relationship,
\[
{ d}_i(\widetilde{a}, \, x) =x \, \theta_{i\, | r, {\boldsymbol \pi}(\widetilde{\mathbf a}, \,x), {\mathbf k}(\widetilde{\mathbf a}, \,x)} +\eta_{i\, | r, {\boldsymbol \pi}(\widetilde{\mathbf a}, \,x), {\mathbf k}(\widetilde{\mathbf a}, \,x)},\ i\in\mathcal{N}, \qquad x \in [\underline{a},\, \overline{a}].
\]
That is, the coefficients of the departures between breakpoints depend on the permutation of the users as well as on the current order of their arrivals and departures. For brevity we omit the dependencies on $x$, $\widetilde{\mathbf a}$, ${\boldsymbol \pi}$ and ${\mathbf k}$. Manipulating \eqref{eq:dRecursion} we obtain, 
\begin{equation}\label{eq:theta_single}
\big(\theta_i, \, \eta_i \big)=
\left\{
	\begin{array}{ll}
		\Big(
		0,\,
		\frac{1+a_i(\beta-\alpha(i-h_i))-\alpha\left(\sum_{j=i+1}^{k_i}a_j-\sum_{j=h_i}^{i-1}\eta_j\right)}{\beta-(k_i-i)\alpha}
		\Big), &  i<\pi_r \ , \ k_i< \pi_r, \\
		\Big(
		-\frac{\alpha\left(1-\sum_{j=h_i}^{i-1}\theta_j\right)}{\beta-\alpha(k_i-i)},\,
		\frac{1+a_i(\beta-\alpha(i-h_i))-\alpha\left(\sum_{j=i+1}^{k_i}a_j\mathbbm{1}\{j\neq \pi_r\}-\sum_{j=h_i}^{i-1}\eta_j\right)}{\beta-\alpha(k_i-i)}
		\Big),
		 & i<\pi_r \ , \ k_i\geq \pi_r, \\
\Big(
		\frac{\beta+\alpha\sum_{j=h_i}^{i-1}(\theta_j-1)}{\beta-\alpha(k_i-i)} ,\,
		\frac{1-\alpha\left(\sum_{j=i+1}^{k_i}a_j-\sum_{j=h_i}^{i-1}\eta_j\right)}{\beta-\alpha(k_i-i)}
		\Big),
		&  i=\pi_r, \\
		\Big(
		\frac{\alpha\sum_{j=h_i}^{i-1}\theta_j}{\beta-\alpha(k_i-i)}, \,
		\frac{1+a_i(\beta-\alpha(i-h_i))-\alpha\left(\sum_{j=i+1}^{k_i}a_j-\sum_{j=h_i}^{i-1}\eta_j\right)}{\beta-\alpha(k_i-i)}
		\Big), &  i>\pi_r. \\
	\end{array}
\right.
\end{equation}

On every interval, the departure times $d_i$ are all affine and continuous w.r.t $x$ with the above coefficients, until a breakpoint (of type 1a, 1b, 2a or 2b) occurs. Computing the time of the next breakpoint is easily done by considering the piecewise affine dynamics. Potential breakpoints of types 1a and 1b are to occur at times $t$ where $x+t=a_{\pi_r+1}$ and $t \, \theta_i+d_i=a_{r}+t$, respectively. Potential breakpoints of types 2a and 2b involving user $i$ are to occur at times $t \, \theta_i+d_i=a_{k_i+1}$ and $t \, \theta_i+d_i=a_{k_i}$ respectively. Observing now that type 2a breakpoints may occur only when $\theta_i>0$ and type 2b breakpoints may occur only when $\theta_i<0$ we have that the next breakpoint occurs at, 
\begin{equation}
\label{eq:343}
\tau = \min\{ t_0,t_1,\dots,t_N,t_{N+1}\},
\end{equation}
where $t_0=a_{\pi_r+1}-x$ (type 1a breakpoints), $t_{N+1}=\bar{a}-x$ (termination) and for $1\leq i\leq N$:
\[
t_i=
\left\{
	\begin{array}{ll}
		\frac{a_{k_i}-\theta_i x-\eta_i}{\theta_i} & \mbox{, } 		\theta_i<0, \ k_i\neq r , \\
				\frac{a_{k_i}-\theta_i x-\eta_i}{\theta_i-1} & \mbox{, } 		\theta_i<0, \ k_i=r,  \\
		\frac{a_{k_i+1}-\theta_i x-\eta_i}{\theta_i} & \mbox{, } \theta_i>0 , 	\\
		\infty & \mbox{, } \theta_i=0. 	\\
	\end{array}	
\right. 
\]
Considering the time interval until the next breakpoints, $[x,\tau]$ we have that the total cost as a function of the arrival time $\hat{x} \in [x,\tau]$ of user $r$ is
\begin{equation*}\label{Algo_r_cost}
{\tilde c}(\hat{x};{\boldsymbol \pi}):=\sum_{j\in\mathcal{N}}\left((\theta_{\pi_j}\hat{x}+\eta_{\pi_j}-d_{j}^*)^2+\gamma(\theta_{\pi_j}\hat{x}+\eta_{\pi_j}-a_{\pi_j})\right),
\end{equation*}
with derivative $\partial {\tilde c}(\hat{x};{\boldsymbol \pi})=\sum_{j\in\mathcal{N}}\theta_{\pi_j}\left(2(\eta_{\pi_j}-d_{j}^*)+\gamma\right)+2\hat{x}\sum_{j\in\mathcal{N}}\theta_{\pi_j}^2$, and with the root $x_0 \ge x$, solving $\partial {\tilde c}(x_0;{\boldsymbol \pi})=0$  (and often not lying within the interval $[x,\tau]$):
\begin{equation*}\label{Algo_root}
x_0=\frac{-\sum_{j\in\mathcal{N}}\theta_{\pi_j}\left(2(\eta_{\pi_j}-d_{j}^*)+\gamma\right)}{2\sum_{j\in\mathcal{N}}\theta_{\pi_j}^2}.
\end{equation*}
Note that it is crucial to keep track of ${\boldsymbol \pi}$ at every step in order to associate the correct ideal departure time to every user. In iterating over intervals we search for the minimal ${\tilde c}(\hat{x};{\boldsymbol \pi})$ (denoted $m^*$) as follows: If $\partial {\tilde c}(\hat{x};{\boldsymbol \pi})>0$ for all $\hat{x} \in [x,\tau]$, then we continue to the next interval. Otherwise, if $x_0-x\leq \tau$ and $m^*>{\tilde c}(x_0;{\boldsymbol \pi})$, then set $m^*={\tilde c}(x_0;{\boldsymbol \pi})$ and $\big(\mathbf{a}^*\big)_r=x_0$, and if $x_0-x>\tau$ and $m^*>{\tilde c}(x+\tau;{\boldsymbol \pi})$, then set $m^*={\tilde c}(x+\tau;{\boldsymbol \pi})$ and $\big(\mathbf{a}^*\big)_r=x+\tau$. 

In this way, $x$ updates over intervals, of the form $[x,\tau]$. Prior to moving to the next interval we need to update the permutation variables $\mathbf{\pi}$, $\mathbf{k}$, and $\mathbf{h}$. Denote the minimizing set of \eqref{eq:343} by $\mathcal{T}:=\argmin\lbrace t_0,t_1,\dots,t_N\rbrace$ and sequentially for every $i\in\mathcal{T}$:
\begin{itemize}
\item If $i=0$, then we update $\pi$ by changing the order between user $r$ and the next user $j:\pi_j=\pi_r+1$, i.e. set $\pi_r=\pi_r+1$ and $\pi_j=\pi_j-1$. In this case, there is no change in $\mathbf{k}$ or $\mathbf{h}$. (Type 1a breakpoints).
\item If $i\in\{1,\ldots,N\}$, then the order ${\boldsymbol \pi}$ does not change, but we update $\mathbf{k}$ and $\mathbf{h}$: If $\theta_{i}<0$, then update $h_{k_{i}}=h_{k_{i}}+1$, followed by $k_{i}=k_{i}-1$. If $\theta_{i}>0$, then update $k_{i}=k_{i}+1$, followed by $h_{k_{i}	}=h_{k_{i}}-1$. (All other types of breakpoints).
\item If $i=N+1$, then the iteration is complete and no changes are required.
\end{itemize} 

\begin{remark}
For any convex and differentiable cost functions, the first order condition yielding $x_0$ can be solved. For some elaborate functions this may also require a numerical procedure. If the late and early cost functions are not strictly convex (for example affine), then computing $x_0$ can be skipped. If the cost function is piecewise affine, then only the sign of $\partial {\tilde c}$ needs to be computed, and if it is negative check if the next point $x+\tau$ is a new minimum point or not.
\end{remark}

\begin{algorithm}[H]
\renewcommand{\thealgorithm}{}
\caption{\textbf{4}: Global search over a single coordinate}
\label{algo:Alg4}
\textbf{Input}: $\tilde{\mathbf a} \in {\widetilde{\cal R}}$, $r \in {\cal N}$, ${\boldsymbol \pi}$\\ 
\textbf{Output}: $\mathbf{a}^*$ and $m^*$
\begin{algorithmic}
\State init $x=\tilde{a}_r=\underline{a}$
\State init $a={\cal O}(\tilde{\mathbf{a}})$
\State run \textbf{Alg.1a}$(a)$ $\rightarrow\left(\mathbf{d}, \mathbf{k},\mathbf{h}\right)$
\State init $\mathbf{a}^*= \tilde{\mathbf{a}}$
\State init $m^*={\tilde c}(x;{\boldsymbol \pi})$
\State set $\pi_r=1$
\For{$i<r$} \State set $\pi_i=\pi_i+1$ \EndFor	
\While{$x\leq\overline{a}$} 
	\State set $a={\cal O}(\tilde{\mathbf{a}})$
	\State compute: ${\boldsymbol\theta}$, ${\boldsymbol\eta}$, $\tau$, $\mathcal{T}$, and $\partial {\tilde c}(x;{\boldsymbol \pi})$
	\If{$\partial {\tilde c}(x;{\boldsymbol \pi})<0$}
		\State compute $x_0$ and ${\tilde c}(x_0;{\boldsymbol \pi})$
		\If{$x_0<x+\tau$}
			\If{${\tilde c}(x_0;{\boldsymbol \pi})<m^*$}
				\State set $a^*_r=x_0$
				\State set $m^*={\tilde c}(x_0;{\boldsymbol \pi})$			
			\EndIf
		\ElsIf{${\tilde c}(x+\tau;{\boldsymbol \pi})<m^*$}
			\State set $a^*_r=x+\tau$
			\State set $m^*={\tilde c}(x+\tau;{\boldsymbol \pi})$
		\EndIf
	\EndIf
	\State set $x=x+\tau$	
	\For{$i\in \mathcal{T}$}
		\If{$i=0$}				
			\State set $\pi_j=\pi_j-1$ where $j$ satisfies $\pi_j=\pi_r+1$ 
			\State set $\pi_r=\pi_r+1$
		\EndIf	
		\If{$i\in\{1,\ldots,N\}$}	
			\If{$\theta_{i}<0$}
				\State set $h_{k_{i}}=h_{k_{i}}+1$ and $k_{i}=k_{i}-1$
			\ElsIf{$\theta_{i}>0$}
				\State set $k_{i}=k_{i}+1$ and $h_{k_{i}}=h_{k_{i}}-1$
			\EndIf
		\EndIf				
	\EndFor	
\EndWhile	
\State \textbf{return} ({$\mathbf{a}^*$,\, $m^*$})
\end{algorithmic}
\end{algorithm}

\subsection{Computational Complexity}
\label{sec:runningTimes}

In the following series of lemmata we analyse the complexity of Algorithm 4. In particular, we prove Proposition \ref{prop:cpi_breakpoints}, establishing bounds for the number of breakpoints of each type. Throughout the analysis we continue denoting the coordinate being optimised by $r$ and the respective value by $x=\tilde{a}_r$. Keep in mind that $\pi_i$, $\theta_i$, $k_i$, and $h_i$ are functions of $x$ and the initial unordered vector $\tilde{\mathbf{a}}$ for every $i\in\mathcal{N}$. We treat ${\mathbf a}$ as the ordered vector \eqref{eq:325} as before.

\begin{lemma}\label{Lemma_coeff_decreasing}
For any $i\in\mathcal{N}$ such that $i\neq r$, the coefficient $\theta_i \le 0$ and as a consequence $d_i(x)$ is monotone non-increasing for every $x>a_i$.
\end{lemma}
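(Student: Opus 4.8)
The plan is to read the coefficient $\theta_i$ directly off the explicit recursion \eqref{eq:theta_single} and argue by strong induction on the \emph{position} of a user in the current arrival order. The key observation is that the hypothesis $x > a_i$ places user $i$ strictly before user $r$ in the ordering, i.e.\ at a position $p < \pi_r$, which is exactly the regime governed by the first two lines of \eqref{eq:theta_single}. In those two cases the recursion for $\theta_p$ refers only to coefficients $\theta_j$ with $j \in \{h_p,\dots,p-1\}$, and since $j \le p-1 < \pi_r$ each such index is again a position strictly before $r$. Thus the coefficients of all users arriving before $r$ are determined self-referentially and never invoke the coefficient $\theta_{\pi_r}$ of user $r$, which is the one genuinely positive slope. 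This is the crux of the argument, and it also explains why the restriction $x>a_i$ cannot be dropped: for a position $p>\pi_r$ the third/fourth cases of \eqref{eq:theta_single} feed on $\theta_{\pi_r}$, and one checks directly that $\theta_p>0$ can occur.

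Before the induction I would record that every denominator in \eqref{eq:theta_single} is strictly positive. By property~(ii) following \eqref{eq:HiDef} we have $p \le k_p \le N$, so $0 \le k_p - p \le N-1$, and the standing assumption $N < \beta/\alpha + 1$ gives $\beta - \alpha(k_p - p) \ge \beta - \alpha(N-1) > 0$. Hence the sign of each $\theta_p$ is decided entirely by the sign of its numerator.

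The induction then runs over positions $p = 1,\dots,\pi_r-1$ with the claim $\theta_p \le 0$. In the base case $p=1$ the sum $\sum_{j=h_1}^{0}\theta_j$ is empty, so either $\theta_1 = 0$ (when $k_1 < \pi_r$) or $\theta_1 = -\alpha/(\beta-\alpha(k_1-1)) < 0$ (when $k_1 \ge \pi_r$). For the step at a position $p<\pi_r$: if $k_p < \pi_r$ then $\theta_p = 0$ outright; if $k_p \ge \pi_r$ then every $j \in \{h_p,\dots,p-1\}$ satisfies $j < \pi_r$, so the inductive hypothesis gives $\theta_j \le 0$, hence $\sum_{j=h_p}^{p-1}\theta_j \le 0$, the numerator $-\alpha\bigl(1 - \sum_{j=h_p}^{p-1}\theta_j\bigr)$ is strictly negative, and therefore $\theta_p < 0$. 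Specialising to the position occupied by user $i$, which is below $\pi_r$ precisely because $x>a_i$, yields $\theta_i \le 0$.

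For the stated consequence I would then observe that $d_i(\cdot)$ is continuous in $x$ (it is produced by Algorithm~1a and depends uniquely on the arrivals by Lemma~\ref{lemma:unique}) and piecewise affine with slope $\theta_i$ on each piece. As $x$ ranges over $\{x : x > a_i\}$ only user $r$ changes its position among the fixed arrivals, so user $i$ stays strictly before $r$ throughout; the slope is therefore non-positive on every piece, and continuity across breakpoints makes $d_i$ non-increasing on the whole ray $x > a_i$. I expect the only real obstacle to be the bookkeeping of the indexing—faithfully translating $x>a_i$ into the positional statement $p<\pi_r$ and confirming that the range $\{h_p,\dots,p-1\}$ never reaches $\pi_r$—since once the self-contained structure is established the sign arithmetic is immediate.
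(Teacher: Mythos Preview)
Your proof is correct and follows essentially the same route as the paper: translate $x>a_i$ into the positional statement $i<\pi_r$, read off the two relevant cases of \eqref{eq:theta_single}, note the denominator is positive by $N<\beta/\alpha+1$, and run strong induction on positions below $\pi_r$ to force $\sum_{j=h_i}^{i-1}\theta_j\le 0$ and hence $\theta_i\le 0$. The only cosmetic difference is that the paper starts its induction at $i=h_{\pi_r}$ (having first observed $\theta_j=0$ for $j<h_{\pi_r}$) whereas you start at $p=1$; the two are equivalent.
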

\begin{lemma}\label{Lemma_coeff_changes}
For any permutation ${\boldsymbol \pi}$ at the start of the global search on $r$, the coefficient $\theta_i$ of any $i\in{\boldsymbol \pi}$ changes sign from strictly positive to strictly negative or vice versa at most $i-1$ times during the search. 
\end{lemma}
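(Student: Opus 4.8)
The plan is to argue by strong induction on the position index $i$, exploiting the triangular structure of \eqref{eq:theta_single}, in which $\theta_i$ is expressed through the coefficients $\theta_{h_i},\ldots,\theta_{i-1}$ of strictly earlier positions only. First I would record the elementary but essential reduction that, because $N<\beta/\alpha+1$ forces every denominator $\beta-\alpha(k_i-i)$ to be strictly positive, the sign of $\theta_i$ equals the sign of the corresponding numerator in \eqref{eq:theta_single}. For positions $i<\pi_r$ this numerator is either $0$ or strictly negative (the first two branches), and for $i=\pi_r$ it is strictly positive, since delaying an arrival cannot advance that user's own departure. Hence, as long as position $i$ lies weakly to the left of $r$, the coefficient $\theta_i$ never passes strictly through zero, so every genuine sign change must occur while $i>\pi_r$, where $\theta_i$ carries the sign of the running sum $\Sigma_i:=\sum_{j=h_i}^{i-1}\theta_j$. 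I would also note at the outset that ${\boldsymbol \pi}$ changes only through the $N-1$ adjacent transpositions generated as $r$ overtakes each other user (the Type~1a breakpoints); these are bookkept separately, so the count in the statement refers to the reversals produced at the Type~2a/2b breakpoints, i.e. to the zero-crossings of $\Sigma_i$ while ${\boldsymbol \pi}$ is held fixed.

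With this reduction the task becomes to bound the number of zero-crossings of $\Sigma_i$ by $i-1$. For the base case $i=1$ the sum defining $\theta_1$ is empty, so $\theta_1$ keeps a fixed sign on each interval on which ${\boldsymbol \pi}$ is constant, giving $0=i-1$ crossings. For the inductive step I would assume that each $\theta_j$ with $j<i$ changes sign at most $j-1\le i-2$ times and seek to show that $\theta_i$ changes sign at most $i-1$ times. The structural input is Lemma~\ref{Lemma_coeff_decreasing}: in $\Sigma_i$ every summand indexed below $\pi_r$ is $\le 0$, exactly one summand (the term $j=\pi_r$, when present) is $>0$, and the remaining summands $\theta_{\pi_r+1},\ldots,\theta_{i-1}$ are the already-controlled sign-varying coefficients. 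The goal is to convert this ``single positive source'' picture into the increment $S_i\le S_{i-1}+1$, where $S_i$ denotes the number of genuine sign changes at position $i$; together with $S_1=0$ this telescopes to the asserted $S_i\le i-1$.

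The main obstacle is precisely this last implication: the sign of a partial sum is not controlled term by term, so the inductive hypothesis on the individual $\theta_j$ does not by itself bound the crossings of $\Sigma_i$. The delicate point I expect to have to push through is that $\Sigma_i$ inherits its positive part from the single coefficient $\theta_{\pi_r}$, whose influence on the later positions grows monotonically in $x$ (delaying $r$ can only enlarge its overlap with the users to its right), while the negative contributions are monotone in the sense supplied by Lemma~\ref{Lemma_coeff_decreasing}; appending one further coordinate to the running sum should therefore create at most one new reversal beyond those already present at position $i-1$. I would finish by discharging the boundary bookkeeping that the induction glosses over: the jumps of $h_i$, the special branch $k_i=r$ in the breakpoint times \eqref{eq:343}, and the convention that any reversal occurring exactly at a Type~1a breakpoint is attributed to the permutation change rather than counted here, which is what keeps the base case $S_1=0$ sharp.
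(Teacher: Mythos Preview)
Your proposal follows essentially the same skeleton as the paper's proof: reduce the sign of $\theta_i$ to that of its numerator via $N<\beta/\alpha+1$, invoke Lemma~\ref{Lemma_coeff_decreasing} to pin down the phase $i<\pi_r$, and for $i>\pi_r$ track the sign of $\Sigma_i=\sum_{j=h_i}^{i-1}\theta_j$ by induction on the initial position, aiming at the increment $S_i\le S_{i-1}+1$. The paper's inductive step is stated just as tersely as the step you flag as delicate: it simply asserts that the sign changes of $\theta_i$ (while $i>\pi_r$) occur only where $\theta_{i-1}$ changes sign, plus one additional change at the overtaking point $x=a_i$, and concludes. So the gap you honestly identify --- that the crossings of a running sum are not obviously bounded by those of a single summand --- is not resolved by a hidden idea in the paper; the paper's argument is no more detailed than yours on this point.

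Two small bookkeeping discrepancies are worth aligning. First, the paper starts the induction at $i=2$ (since $\pi_r=1$ at $x=\underline{a}$, so position~1 is the moving coordinate itself and is handled separately via $\theta_{\pi_r}\ge 0$); your base case $i=1$ conflates the moving user with a fixed one. Second, your reading that ``the count in the statement refers to the reversals produced at the Type~2a/2b breakpoints'' is not the paper's convention: the lemma counts \emph{all} sign changes of $\theta_i$ over the search, and in the paper the ``$+1$'' in the induction is precisely the single reversal at the Type~1a breakpoint $x=a_i$, where the tracked user passes from the regime $i>\pi_r$ (coefficient of either sign) to $i<\pi_r$ (coefficient $\le 0$ by Lemma~\ref{Lemma_coeff_decreasing}). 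Adopting this convention makes the base case $S_2=1$ and removes the need for your separate attribution rule at Type~1a events.
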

We now prove proposition~\ref{prop:cpi_breakpoints}:
\begin{proof}
For any $2\leq i\leq N$ in the original permutation ${\boldsymbol \pi}$, the type ~2a and ~2b breakpoints occur at most $N-i$ times for every change of sign. This is because their departure time can only cross arrival times of later arrivals. According to Lemma \ref{Lemma_coeff_changes}, the number of sign changes for any $2\leq i\leq N$ is at most $i-1$. Thus, the total number of breakpoints of type (2a or ~2b) is at most
\[
\sum_{i=2}^N (N-i)(i-1)=\frac{N(2-3N+N^2)}{6}.
\]
Thus, adding up all types of breakpoints, we get that the search domain $[\underline{a},\overline{a}]$ is broken up to at most $\left(\frac{1}{3}N^3-N^2+\frac{8}{3}N-2\right)$ intervals.
\qed
\end{proof}

Furthermore, we have the following bound for the complexity of Algorithm~4.

\begin{corollary}\label{prop:alg4_complexity}
The computation complexity of Algorithm 4 is at most $O(N^5)$.
\end{corollary}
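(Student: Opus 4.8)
The plan is to bound the running time of Algorithm~4 as the product of two factors: the number of iterations of the outer \texttt{while} loop, and the worst-case cost of a single iteration. The first factor is already delivered by Proposition~\ref{prop:cpi_breakpoints}: each pass through the loop advances $x$ from one breakpoint to the next (the update $x = x+\tau$ with $\tau$ as in \eqref{eq:343}), and since the search interval $[\underline a,\overline a]$ is subdivided into at most $O(N^3)$ such sub-intervals, the loop executes $O(N^3)$ times. It then remains only to show that each iteration can be carried out in $O(N^2)$ operations, which yields the claimed $O(N^3)\cdot O(N^2)=O(N^5)$ bound.

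For the per-iteration cost I would walk through the body of the loop and bound each step. The re-ordering ${\cal O}(\tilde{\mathbf a})$ costs at most $O(N\log N)$ (and in fact, since only $x=\tilde a_r$ moves and only by one breakpoint, at most a single adjacent transposition is ever triggered, at a Type~1a point). The dominant step is the evaluation of the coefficient vectors ${\boldsymbol\theta}$ and ${\boldsymbol\eta}$ from \eqref{eq:theta_single}: these are given by a recursion in which $\theta_i$ and $\eta_i$ depend on the windowed partial sums $\sum_{j=h_i}^{i-1}\theta_j$, $\sum_{j=h_i}^{i-1}\eta_j$ and $\sum_{j=i+1}^{k_i}a_j$, each ranging over up to $N$ indices. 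Computing all $N$ pairs $(\theta_i,\eta_i)$ in increasing order of $i$ therefore costs $O(N)$ per coordinate and $O(N^2)$ in total. Every remaining quantity in the iteration is cheaper: the candidate times $t_0,\dots,t_{N+1}$ and hence $\tau$ and $\mathcal T$ in \eqref{eq:343} are each $O(1)$ once the coefficients are known, so they are obtained in $O(N)$; the derivative $\partial{\tilde c}(x;{\boldsymbol\pi})$, the root $x_0$, and the cost ${\tilde c}(\cdot\,;{\boldsymbol\pi})$ are single sums over ${\cal N}$ and so are $O(N)$; and the bookkeeping updates to ${\boldsymbol\pi}$, $\mathbf k$ and $\mathbf h$ touch at most $|\mathcal T|=O(N)$ entries.

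Collecting these bounds, one iteration costs $O(N^2)$, dominated by the coefficient recursion, and multiplying by the $O(N^3)$ iterations gives the total complexity $O(N^5)$. The one point that needs genuine care — and essentially the only obstacle — is justifying the $O(N^2)$ per-iteration figure: one must check that the recursive dependence in \eqref{eq:theta_single} does not compound, which it does not, since each $\theta_i,\eta_i$ \emph{reads} previously stored values rather than recomputing them, so that the windowed summations are the true bottleneck and not any hidden nested work. Everything else is a routine accounting of $O(N)$ and $O(N\log N)$ operations, so the estimate follows.
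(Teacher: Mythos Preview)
Your proof is correct and follows essentially the same approach as the paper: both factor the running time as (number of breakpoint intervals) $\times$ (per-interval work), invoke Proposition~\ref{prop:cpi_breakpoints} for the $O(N^3)$ factor, and identify the computation of ${\boldsymbol\theta},{\boldsymbol\eta}$ as the $O(N^2)$ bottleneck per iteration. The paper's version is terser---it appeals to the $2N$-step bound of Proposition~\ref{prop:alg1_complexity} for the coefficient recursion rather than arguing directly from \eqref{eq:theta_single} as you do---but the content is the same.
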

\begin{proof}
In every interval step of a global search on a single coordinate there is a need to compute the coefficient vectors ${\boldsymbol \eta}$ and ${\boldsymbol \theta}$. This is equivalent to calculating the departure times recursively using Algorithm 1a. In Proposition \ref{prop:alg1_complexity} it was shown that the recursion requires at most $2N$ steps. On top of this, in every one of these steps the actual computation requires summation of up to $N$ variables. Now since the number of breakpoints intervals is bounded by $O(N^3)$ we conclude the result.
\qed
\end{proof}

\subsection{Coordinate Pivot Iteration Optimization}
\label{sec:CPIAlg}

In this subsection we illustrate how Algorithm 4 can be applied to carry out standard Coordinate Pivot Iteration (CPI), see \cite{Bertsekas1999}, p272. In every iteration of the CPI algorithm, the total cost function is minimized with respect to the arrival time of one user, when all other arrival times are fixed. This is then repeated for all users; we call the iteration over all $N$ users a CPI cycle. 
The CPI algorithm stops when the total improvement in a cycle is smaller than some specified tolerance parameter, $\epsilon >0$. Note that in non-smooth CPI (such as our case), CPI often stops when the total improvement is in-fact exactly $0$. That is, $\epsilon$ is often not a significant parameter. A further comment is that our CPI algorithm utilizes Algorithm~4 searching over the broader space, $\widetilde{\cal R}$. We can thus improve the objective (see Lemma \ref{Lemma:optimal_order}) by
incorporating the ordering operator, ${\cal O}$, at the end of each CPI cycle. 

We add the following notations for the optimization procedure: Let $n=0,1,\dots$ be the cycle number, $c^{(n)}$ the total cost at end of cycle $n$, $m^*$ the global minimal total cost, and $\mathbf{a}^*$ the global optimal arrival vector.

\begin{algorithm}[H]
\renewcommand{\thealgorithm}{}
\caption{\textbf{5}: Coordinate pivot iteration (global search)}
\textbf{Input}: $\mathbf{a}^{(0)}$ and $\epsilon$ \\
\textbf{Output}: $\mathbf{a}^*$ and $m^*$
\begin{algorithmic}
\State init $n=0$
\State init $\Delta=\epsilon+1$
\State init $\mathbf{a}^*=\mathbf{a}^{(0)}$
\State init $c^{(0)} = c(\mathbf{a}^*)$
\While{$\Delta>\epsilon$}
	\State set $n=n+1$
	\State set $\tilde{\mathbf a} = {\mathbf a}^*$
	\For{$r\in\mathcal{N}$}
		\State run \textbf{Alg. 4}$(r, \tilde{\mathbf{a}})$ $\rightarrow \tilde{\mathbf{a}},$
	\EndFor
	\State set ${\mathbf a}^* = {\cal O}\big(\tilde{\mathbf a}\big)$
	\State set ${c}^{(n)}=c({\mathbf a}^*)$
	\State set $\Delta={c}^{(n-1)}-{c}^{(n)}$
\EndWhile
\State set $m^*={c}^{(n)}$
\State \textbf{return} {$(\mathbf{a}^*,\ m^*)$}
\end{algorithmic}
\label{algo:optim1}
\end{algorithm}


Hinging upon the results of the previous section, we have:

\begin{corollary}
The computation complexity of a single \rm{CPI} cycle, i.e. conducting a line search on all coordinates, is at most $O(N^6)$.
\end{corollary}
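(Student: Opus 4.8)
The plan is to observe that this bound follows almost immediately from the single-coordinate complexity already established in Corollary~\ref{prop:alg4_complexity}. By definition (see Algorithm~5), a single CPI cycle consists of the inner loop that runs the global single-coordinate search, Algorithm~4, exactly once for each user $r \in \mathcal{N}$, followed by a small amount of bookkeeping: applying the ordering operator $\mathcal{O}$ to the resulting vector and evaluating the total cost $c(\cdot)$ once. My first step is therefore to isolate the dominant contribution, namely the $N$ invocations of Algorithm~4, and bound the remaining operations separately.

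Next I would invoke Corollary~\ref{prop:alg4_complexity}, which bounds the cost of a single run of Algorithm~4 by $O(N^5)$. Since the inner loop executes this search once for each of the $N$ coordinates, the aggregate cost of all single-coordinate searches in one cycle is $N \cdot O(N^5) = O(N^6)$. I would then verify that the auxiliary per-cycle steps are of strictly lower order. The ordering operator $\mathcal{O}$ is a sort of $N$ reals, costing $O(N \log N)$; evaluating $c(\mathbf{a}^*)$ amounts to one call to Algorithm~1a to recover the departure times, which by Proposition~\ref{prop:alg1_complexity} uses at most $2N$ evaluations of \eqref{eq:dRecursion}, each a summation of at most $N$ terms, giving $O(N^2)$, plus an $O(N)$ summation of the per-user costs. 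All of these are dominated by $O(N^6)$, so the total per-cycle complexity is $O(N^6)$, as claimed.

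There is no genuine obstacle here: the result is a direct multiplication of the per-coordinate cost by the number of coordinates. The only point requiring a moment's care is confirming that the auxiliary sorting and cost-evaluation steps do not dominate the bound, which the estimates above settle; everything else is a routine consequence of the preceding complexity analysis.
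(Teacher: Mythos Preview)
Your proposal is correct and follows essentially the same approach as the paper: invoke the $O(N^5)$ bound from Corollary~\ref{prop:alg4_complexity} for a single coordinate and multiply by the $N$ coordinates in a cycle to obtain $O(N^6)$. Your additional verification that the sorting and cost-evaluation steps are of lower order is more explicit than the paper's one-line argument, but the substance is identical.
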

\begin{proof}
In Proposition \ref{prop:alg4_complexity} we established that for a single coordinate the complexity is at most $O(N^5)$. It is therefore immediate that the complexity of running the algorithm for every coordinate is at most $O(N^6)$. \qed
\end{proof}

Note that while we have a polynomial time CPI algorithm, there is no guarantee that it converges to a local minimum since the objective function is not smooth. In fact, numerical experimentation suggests that this is typically the case when the number of users is not very small, i.e., $N\geq 4$. Nevertheless, experimentation has shown that CPI algorithm generally outputs an arrival vector that lies in the vicinity of the optimum. This motivates combining it with the neighbour search, Algorithm~3 as discussed in the next section.

\section{A Combined Heuristic and Numerical Results}
\label{sec:combHeurist}
We now utilise the problem structure and aforementioned algorithms to produce a combined heuristic. We use ${\cal A}$ as in~\eqref{eq:3488} for initial points. For each of these $M$ initial points we run a CPI (global) search followed by neighbour (local) search. The core principal is to use the CPI method in order to find a ``good'' initial polytope, or equivalently an arrival-departure permutation, and then to seek a local minimum using the neighbour search.

\begin{algorithm}[H]
\renewcommand{\thealgorithm}{}
\caption{\textbf{6}: Combined global and local search heuristic}
\textbf{Input}: Model parameters only 
($N,\alpha,\beta,\mathbf{d}^*$ and $\gamma$) \\
\textbf{Output}: $\mathbf{a}^*$ (local optimum)
\begin{algorithmic}
\State init $m^*=\infty$
\For{$\mathbf{a}\in\mathcal{A}$}			
	\State run $\mathbf{Alg.5}(\mathbf{a},\ldots)\rightarrow(\hat{\mathbf{a}},\hat{m})$
	\State set $\hat{\mathbf{k}}=\mathbf{k}(\hat{\mathbf{a}})$
	\State run $\mathbf{Alg.3}(\tilde{\mathbf{k}},\ldots)\rightarrow(\hat{\mathbf{a}},\hat{m})$
	\If{$\hat{m}<m^*$}
		\State set $\mathbf{a}^*=\hat{\mathbf{a}}$ and $m^*=\hat{m}$
	\EndIf
\EndFor
\State \textbf{return} {$(\mathbf{a}^*,\ m^*)$}
\end{algorithmic}
\end{algorithm}

We tested the combined heuristic Algorithm~6 on a variety of problem instances and it appears to perform very well both in terms of running time and in finding what we believe is a global optimum. Here we illustrate these results for one such problem instance. We take $\beta = 1$ and $\alpha = 0.8/N$ (in this case the maximal slowdown is of the order of $80\%$ independently of $N$). We set ${\mathbf d}^*$ as the $N$ quantiles of a normal distribution with mean $0$ and standard deviation $1/2$. That is, there is an ideal departure profile centred around $0$. It is expected that when using optimal schedules, more congestion will occur as $N$ increases and/or $\gamma$ decreases.

Figure \ref{fig:opt_arrival_departure2} illustrates the dynamics of the obtained schedules as generated by the heuristic (using $M=3$ and $\epsilon=0.001$). In these plots arrival times of individual users are plotted on the top axis, marked by blue dots, shifted to the right by the free flow time ($1/\beta=1$). Departure times are plotted on the bottom axis. Users that do not experience any delay are then represented by lines that are exactly vertical. Further, the more slanted the line, the more slowdown that the user experiences. The ideal departure times are marked by green stars. Hence ideally the stars are to align with the red dots. This occurs exactly when $\gamma=0$, and approximately occurs for small~$\gamma$, for instance $\gamma=0.1$ as in (a) and (d). Then as $\gamma$ is increased, the optimal schedule is such that there is hardly any delay (almost perfectly vertical lines), but in this case, users experience major deviations between departure times and the ideal values.

For $N=15$, as presented in (a)--(c), we were indeed able to verify optimality using the exhaustive search Algorithm~2. For $N=50$, as presented in (d)--(f) we are not able to use the exhaustive search algorithm in any reasonable time. Nevertheless, in this case, in addition to seeing qualitatively sensible results, experimentation showed that increasing $M$ does not modify the results. Hence we believe that the obtained schedules are also optimal. 

For $N \le 15$, we were not able to find a case where the heuristic did not find the optimal schedule. {This was tested on a wide range of parameter values by varying $\alpha$ and $\gamma$ and randomly generating multiple due date vectors.} Further for large $N$ (up to $500$) we see insensitivity with respect to $M$ (the number of initial points) as well as to other randomized initial points. This result was also robust to changes in all of the parameter values ($\alpha$, $\beta$, $\gamma$, and $\mathbf{d}^*$). This leads us to believe that our heuristic performs very well.

Results, comparing running times are reported in Table~\ref{tbl:iter_compare} where we consider the algorithm with a single initial point ${\mathbf a}^0$ ($M=1$), and compare it to the exhaustive search given by Algorithm 2. For this table, we use the same problem data as described above, but scale the standard deviation by $N$ to be $0.04N$. For $N\leq 15$ the combined heuristic converged to the global optimum as verified by the exhaustive search with a negligible number of computations. For example, for $N=15$ the heuristic method made $\sim 737$ core computations, i.e. solving a QP for a single CPI interval or NS polytope, in 3.28 seconds, while the exhaustive search had to solve $\sim 10^7$ quadratic programs and required about $11$ hours \footnote{These computation times are using an AMD computer with 4 \textit{Phenom II 955 3.2GHz} processors, with our algorithms implemented in version 3.1.2 of the R software.}. Clearly, for larger $N$ it is not feasible to run the exhaustive search while the combined heuristic is still very quick, as seen for up to $N=50$ in Table~\ref{tblLife}. 

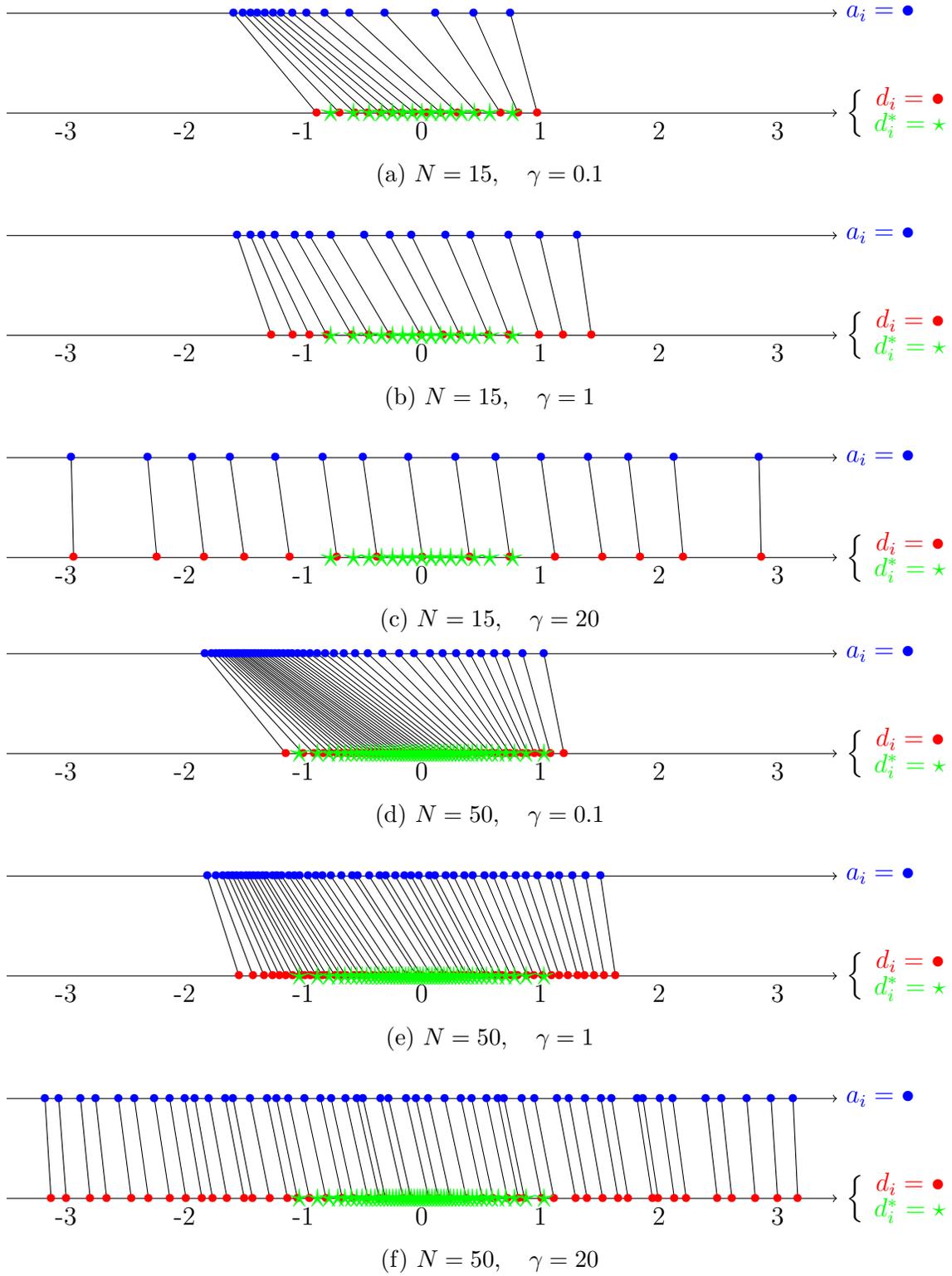
\begin{figure}[H]
\begin{subfigure}{\columnwidth}
\begin{tikzpicture}[xscale=1.9,yscale=1.6]
  \def\xmin{-3.5}
  \def\xmax{3.5}
  \def\ymin{0}
  \def\ymax{1}
    \draw[->] (\xmin,\ymin) -- (\xmax,\ymin) node[right] 
    {$\left\{\def\arraystretch{0.8}\begin{array}{c} \textcolor{red}{d_i=\bullet} \\ \textcolor{green}{d_i^*=\star} \end{array}\right.$} ;
    \draw[->] (\xmin,\ymax) -- (\xmax,\ymax) node[right] {$\textcolor{blue}{a_i=\bullet}$} ;
     \foreach \x in {-3,-2,-1,0,1,2,3}
    	\node at (\x,\ymin) [below] {\x};
    	
\draw[smooth,black] (-1.5875571,\ymax)--(-0.88602495,\ymin);
\draw[smooth,black] (-1.5073526,\ymax)--(-0.69198188,\ymin);
\draw[smooth,black] (-1.4438297,\ymax)--(-0.56310801,\ymin);
\draw[smooth,black] (-1.3869439,\ymax)--(-0.4542124,\ymin);
\draw[smooth,black] (-1.3214492,\ymax)--(-0.34841317,\ymin);
\draw[smooth,black] (-1.2524939,\ymax)--(-0.25380006,\ymin);
\draw[smooth,black] (-1.1844134,\ymax)--(-0.16672851,\ymin);
\draw[smooth,black] (-1.0902795,\ymax)--(-0.0645832,\ymin);
\draw[smooth,black] (-0.9723277,\ymax)--(0.04411745,\ymin);
\draw[smooth,black] (-0.8188977,\ymax)--(0.16407181,\ymin);
\draw[smooth,black] (-0.6103866,\ymax)--(0.30190121,\ymin);
\draw[smooth,black] (-0.3115126,\ymax)--(0.46794233,\ymin);
\draw[smooth,black] (0.1138602,\ymax)--(0.66475661,\ymin);
\draw[smooth,black] (0.4367769,\ymax)--(0.81299596,\ymin);
\draw[smooth,black] (0.746287,\ymax)--(0.97317648,\ymin);

    \foreach \Point in {(-1.5875571,\ymax),	(-1.5073526,\ymax),	(-1.4438297,\ymax),	(-1.3869439,\ymax),	(-1.3214492,\ymax),	(-1.2524939,\ymax),	(-1.1844134,\ymax),	(-1.0902795,\ymax),	(-0.9723277,\ymax),	(-0.8188977,\ymax),	(-0.6103866,\ymax),	(-0.3115126,\ymax),	(0.1138602,\ymax),	(0.4367769,\ymax),	(0.746287,\ymax)}
    {\node[blue] at \Point {\scalebox{0.8}{\textbullet}};}

    \foreach \Point in {(-0.88602495,\ymin),	(-0.69198188,\ymin),	(-0.56310801,\ymin),	(-0.4542124,\ymin),	(-0.34841317,\ymin),	(-0.25380006,\ymin),	(-0.16672851,\ymin),	(-0.0645832,\ymin),	(0.04411745,\ymin),	(0.16407181,\ymin),	(0.30190121,\ymin),	(0.46794233,\ymin),	(0.66475661,\ymin),	(0.81299596,\ymin),	(0.97317648,\ymin)}	
    {\node[red] at \Point {\scalebox{0.8}{\textbullet}};}
    
    \foreach \Point in {(-0.76706027,\ymin),	(-0.57517469,\ymin),	(-0.44357328,\ymin),	(-0.33724488,\ymin),	(-0.24438821,\ymin),	(-0.15931968,\ymin),	(-0.07865534,\ymin),	(0,\ymin),	(0.07865534,\ymin),	(0.15931968,\ymin),	(0.24438821,\ymin),	(0.33724488,\ymin),	(0.44357328,\ymin),	(0.57517469,\ymin),	(0.76706027,\ymin)}	
    {\node[green] at \Point {\scalebox{1.5}{$\star$}};}
\end{tikzpicture}
\caption{$N=15,\quad\gamma=0.1$}\label{fig:opt_arrival_departure_1a}
\end{subfigure}

\bigskip

\begin{subfigure}{\columnwidth}
\begin{tikzpicture}[xscale=1.9,yscale=1.6]
  \def\xmin{-3.5}
  \def\xmax{3.5}
  \def\ymin{0}
  \def\ymax{1}
    \draw[->] (\xmin,\ymin) -- (\xmax,\ymin) node[right] 
    {$\left\{\def\arraystretch{0.8}\begin{array}{c} \textcolor{red}{d_i=\bullet} \\ \textcolor{green}{d_i^*=\star} \end{array}\right.$} ;
    \draw[->] (\xmin,\ymax) -- (\xmax,\ymax) node[right] {$\textcolor{blue}{a_i=\bullet}$} ;
     \foreach \x in {-3,-2,-1,0,1,2,3}
    	\node at (\x,\ymin) [below] {\x};
    
\draw[smooth,black] (-1.55712068,\ymax)--(-1.2680003,\ymin);
\draw[smooth,black] (-1.44345342,\ymax)--(-1.0866241,\ymin);
\draw[smooth,black] (-1.35080819,\ymax)--(-0.9466784,\ymin);
\draw[smooth,black] (-1.23855835,\ymax)--(-0.7992043,\ymin);
\draw[smooth,black] (-1.06881044,\ymax)--(-0.5895157,\ymin);
\draw[smooth,black] (-0.94504264,\ymax)--(-0.4465012,\ymin);
\draw[smooth,black] (-0.76603086,\ymax)--(-0.2674814,\ymin);
\draw[smooth,black] (-0.48430565,\ymax)--(-0.0062453,\ymin);
\draw[smooth,black] (-0.26790878,\ymax)--(0.1786758,\ymin);
\draw[smooth,black] (-0.08653336,\ymax)--(0.3241266,\ymin);
\draw[smooth,black] (0.20068554,\ymax)--(0.5629188,\ymin);
\draw[smooth,black] (0.41262452,\ymax)--(0.7346174,\ymin);
\draw[smooth,black] (0.73240807,\ymax)--(0.9887159,\ymin);
\draw[smooth,black] (0.99384417,\ymax)--(1.1912356,\ymin);
\draw[smooth,black] (1.30998006,\ymax)--(1.4300715,\ymin);

 \foreach \Point in {(-1.55712068,\ymax),	(-1.44345342,\ymax),	(-1.35080819,\ymax),	(-1.23855835,\ymax),	(-1.06881044,\ymax),	(-0.94504264,\ymax),	(-0.76603086,\ymax),	(-0.48430565,\ymax),	(-0.26790878,\ymax),	(-0.08653336,\ymax),	(0.20068554,\ymax),	(0.41262452,\ymax),	(0.73240807,\ymax),	(0.99384417,\ymax),	(1.30998006,\ymax)}
    {\node[blue] at \Point {\scalebox{0.8}{\textbullet}};}

    \foreach \Point in {(-1.2680003,\ymin),	(-1.0866241,\ymin),	(-0.9466784,\ymin),	(-0.7992043,\ymin),	(-0.5895157,\ymin),	(-0.4465012,\ymin),	(-0.2674814,\ymin),	(-0.0062453,\ymin),	(0.1786758,\ymin),	(0.3241266,\ymin),	(0.5629188,\ymin),	(0.7346174,\ymin),	(0.9887159,\ymin),	(1.1912356,\ymin),	(1.4300715,\ymin)}	
    {\node[red] at \Point {\scalebox{0.8}{\textbullet}};}
    
    \foreach \Point in {(-0.76706027,\ymin),	(-0.57517469,\ymin),	(-0.44357328,\ymin),	(-0.33724488,\ymin),	(-0.24438821,\ymin),	(-0.15931968,\ymin),	(-0.07865534,\ymin),	(0,\ymin),	(0.07865534,\ymin),	(0.15931968,\ymin),	(0.24438821,\ymin),	(0.33724488,\ymin),	(0.44357328,\ymin),	(0.57517469,\ymin),	(0.76706027,\ymin)}	
    {\node[green] at \Point {\scalebox{1.5}{$\star$}};}
    
\end{tikzpicture}
\caption{$N=15,\quad\gamma=1$}\label{fig:opt_arrival_departure_1b}
\end{subfigure}

\bigskip

\begin{subfigure}{\columnwidth}
\begin{tikzpicture}[xscale=1.9,yscale=1.6]
  \def\xmin{-3.5}
  \def\xmax{3.5}
  \def\ymin{0}
  \def\ymax{1}
    \draw[->] (\xmin,\ymin) -- (\xmax,\ymin) node[right] 
    {$\left\{\def\arraystretch{0.8}\begin{array}{c} \textcolor{red}{d_i=\bullet} \\ \textcolor{green}{d_i^*=\star} \end{array}\right.$} ;
    \draw[->] (\xmin,\ymax) -- (\xmax,\ymax) node[right] {$\textcolor{blue}{a_i=\bullet}$} ;
     \foreach \x in {-3,-2,-1,0,1,2,3}
    	\node at (\x,\ymin) [below] {\x};
    
\draw[smooth,black] (-2.9550767,\ymax)--(-2.935000582,\ymin);
\draw[smooth,black] (-2.3113216,\ymax)--(-2.23341924,\ymin);
\draw[smooth,black] (-1.9351066,\ymax)--(-1.834742404,\ymin);
\draw[smooth,black] (-1.6160803,\ymax)--(-1.496669664,\ymin);
\draw[smooth,black] (-1.2333256,\ymax)--(-1.113902574,\ymin);
\draw[smooth,black] (-0.8348559,\ymax)--(-0.71542603,\ymin);
\draw[smooth,black] (-0.4967773,\ymax)--(-0.377347325,\ymin);
\draw[smooth,black] (-0.1140166,\ymax)--(0.005413805,\ymin);
\draw[smooth,black] (0.284459,\ymax)--(0.403889551,\ymin);
\draw[smooth,black] (0.6225377,\ymax)--(0.741961302,\ymin);
\draw[smooth,black] (1.0052983,\ymax)--(1.124710001,\ymin);
\draw[smooth,black] (1.403774,\ymax)--(1.523173666,\ymin);
\draw[smooth,black] (1.7420466,\ymax)--(1.842393374,\ymin);
\draw[smooth,black] (2.1247953,\ymax)--(2.203583592,\ymin);
\draw[smooth,black] (2.84228,\ymax)--(2.861555608,\ymin);

\foreach \Point in {(-2.9550767,\ymax),	(-2.3113216,\ymax),	(-1.9351066,\ymax),	(-1.6160803,\ymax),	(-1.2333256,\ymax),	(-0.8348559,\ymax),	(-0.4967773,\ymax),	(-0.1140166,\ymax),	(0.284459,\ymax),	(0.6225377,\ymax),	(1.0052983,\ymax),	(1.403774,\ymax),	(1.7420466,\ymax),	(2.1247953,\ymax),	(2.84228,\ymax)}
    {\node[blue] at \Point {\scalebox{0.8}{\textbullet}};}

    \foreach \Point in {(-2.935000582,\ymin),	(-2.23341924,\ymin),	(-1.834742404,\ymin),	(-1.496669664,\ymin),	(-1.113902574,\ymin),	(-0.71542603,\ymin),	(-0.377347325,\ymin),	(0.005413805,\ymin),	(0.403889551,\ymin),	(0.741961302,\ymin),	(1.124710001,\ymin),	(1.523173666,\ymin),	(1.842393374,\ymin),	(2.203583592,\ymin),	(2.861555608,\ymin)}	
    {\node[red] at \Point {\scalebox{0.8}{\textbullet}};}
    
    \foreach \Point in {(-0.76706027,\ymin),	(-0.57517469,\ymin),	(-0.44357328,\ymin),	(-0.33724488,\ymin),	(-0.24438821,\ymin),	(-0.15931968,\ymin),	(-0.07865534,\ymin),	(0,\ymin),	(0.07865534,\ymin),	(0.15931968,\ymin),	(0.24438821,\ymin),	(0.33724488,\ymin),	(0.44357328,\ymin),	(0.57517469,\ymin),	(0.76706027,\ymin)}	
    {\node[green] at \Point {\scalebox{1.5}{$\star$}};}

\end{tikzpicture}
\caption{$N=15,\quad \gamma=20$}\label{fig:opt_arrival_departure_1c}
\end{subfigure}

\begin{subfigure}{\columnwidth}
\begin{tikzpicture}[xscale=1.9,yscale=1.6]
  \def\xmin{-3.5}
  \def\xmax{3.5}
  \def\ymin{0}
  \def\ymax{1}
    \draw[->] (\xmin,\ymin) -- (\xmax,\ymin) node[right] 
    {$\left\{\def\arraystretch{0.8}\begin{array}{c} \textcolor{red}{d_i=\bullet} \\ \textcolor{green}{d_i^*=\star} \end{array}\right.$} ;
    \draw[->] (\xmin,\ymax) -- (\xmax,\ymax) node[right] {$\textcolor{blue}{a_i=\bullet}$} ;
     \foreach \x in {-3,-2,-1,0,1,2,3}
    	\node at (\x,\ymin) [below] {\x};
    	
\draw[smooth,black] (-1.82882604,\ymax)--(-1.14646463,\ymin);
\draw[smooth,black] (-1.77234903,\ymax)--(-0.99971466,\ymin);
\draw[smooth,black] (-1.73562712,\ymax)--(-0.90657558,\ymin);
\draw[smooth,black] (-1.70446545,\ymax)--(-0.82962533,\ymin);
\draw[smooth,black] (-1.68077652,\ymax)--(-0.77237834,\ymin);
\draw[smooth,black] (-1.65651796,\ymax)--(-0.71672635,\ymin);
\draw[smooth,black] (-1.63728509,\ymax)--(-0.67362885,\ymin);
\draw[smooth,black] (-1.61591005,\ymax)--(-0.62805564,\ymin);
\draw[smooth,black] (-1.5982364,\ymax)--(-0.59238323,\ymin);
\draw[smooth,black] (-1.57818441,\ymax)--(-0.55264384,\ymin);
\draw[smooth,black] (-1.5580292,\ymax)--(-0.51480862,\ymin);
\draw[smooth,black] (-1.54158139,\ymax)--(-0.48511135,\ymin);
\draw[smooth,black] (-1.52155767,\ymax)--(-0.45015468,\ymin);
\draw[smooth,black] (-1.50105204,\ymax)--(-0.41620275,\ymin);
\draw[smooth,black] (-1.48418571,\ymax)--(-0.38969522,\ymin);
\draw[smooth,black] (-1.46293258,\ymax)--(-0.35697216,\ymin);
\draw[smooth,black] (-1.44101228,\ymax)--(-0.32493479,\ymin);
\draw[smooth,black] (-1.41815844,\ymax)--(-0.29321229,\ymin);
\draw[smooth,black] (-1.39936684,\ymax)--(-0.26786421,\ymin);
\draw[smooth,black] (-1.37502339,\ymax)--(-0.23646469,\ymin);
\draw[smooth,black] (-1.34936164,\ymax)--(-0.20502001,\ymin);
\draw[smooth,black] (-1.32222114,\ymax)--(-0.17341807,\ymin);
\draw[smooth,black] (-1.29896446,\ymax)--(-0.14761939,\ymin);
\draw[smooth,black] (-1.26918922,\ymax)--(-0.11546214,\ymin);
\draw[smooth,black] (-1.23736693,\ymax)--(-0.08281329,\ymin);
\draw[smooth,black] (-1.20324052,\ymax)--(-0.04955084,\ymin);
\draw[smooth,black] (-1.16651399,\ymax)--(-0.0155448,\ymin);
\draw[smooth,black] (-1.12684445,\ymax)--(0.01934528,\ymin);
\draw[smooth,black] (-1.09163811,\ymax)--(0.04921232,\ymin);
\draw[smooth,black] (-1.04608224,\ymax)--(0.08619296,\ymin);
\draw[smooth,black] (-0.99626602,\ymax)--(0.12455718,\ymin);
\draw[smooth,black] (-0.94155042,\ymax)--(0.16451801,\ymin);
\draw[smooth,black] (-0.88117213,\ymax)--(0.20631837,\ymin);
\draw[smooth,black] (-0.81421307,\ymax)--(0.25023775,\ymin);
\draw[smooth,black] (-0.73956096,\ymax)--(0.29660064,\ymin);
\draw[smooth,black] (-0.65585768,\ymax)--(0.34578711,\ymin);
\draw[smooth,black] (-0.561431,\ymax)--(0.39824637,\ymin);
\draw[smooth,black] (-0.45420313,\ymax)--(0.45451446,\ymin);
\draw[smooth,black] (-0.33156659,\ymax)--(0.51523741,\ymin);
\draw[smooth,black] (-0.19021315,\ymax)--(0.58120235,\ymin);
\draw[smooth,black] (-0.06527597,\ymax)--(0.63759884,\ymin);
\draw[smooth,black] (0.06991414,\ymax)--(0.69716942,\ymin);
\draw[smooth,black] (0.17731494,\ymax)--(0.74428202,\ymin);
\draw[smooth,black] (0.29017729,\ymax)--(0.7944517,\ymin);
\draw[smooth,black] (0.407513,\ymax)--(0.84851728,\ymin);
\draw[smooth,black] (0.50232918,\ymax)--(0.89431988,\ymin);
\draw[smooth,black] (0.61020135,\ymax)--(0.9490498,\ymin);
\draw[smooth,black] (0.71264489,\ymax)--(1.00425512,\ymin);
\draw[smooth,black] (0.8500828,\ymax)--(1.08372289,\ymin);
\draw[smooth,black] (1.02968823,\ymax)--(1.19772885,\ymin);
    
    \foreach \Point in {(-1.82882604,\ymax),	(-1.77234903,\ymax),	(-1.73562712,\ymax),	(-1.70446545,\ymax),	(-1.68077652,\ymax),	(-1.65651796,\ymax),	(-1.63728509,\ymax),	(-1.61591005,\ymax),	(-1.5982364,\ymax),	(-1.57818441,\ymax),	(-1.5580292,\ymax),	(-1.54158139,\ymax),	(-1.52155767,\ymax),	(-1.50105204,\ymax),	(-1.48418571,\ymax),	(-1.46293258,\ymax),	(-1.44101228,\ymax),	(-1.41815844,\ymax),	(-1.39936684,\ymax),	(-1.37502339,\ymax),	(-1.34936164,\ymax),	(-1.32222114,\ymax),	(-1.29896446,\ymax),	(-1.26918922,\ymax),	(-1.23736693,\ymax),	(-1.20324052,\ymax),	(-1.16651399,\ymax),	(-1.12684445,\ymax),	(-1.09163811,\ymax),	(-1.04608224,\ymax),	(-0.99626602,\ymax),	(-0.94155042,\ymax),	(-0.88117213,\ymax),	(-0.81421307,\ymax),	(-0.73956096,\ymax),	(-0.65585768,\ymax),	(-0.561431,\ymax),	(-0.45420313,\ymax),	(-0.33156659,\ymax),	(-0.19021315,\ymax),	(-0.06527597,\ymax),	(0.06991414,\ymax),	(0.17731494,\ymax),	(0.29017729,\ymax),	(0.407513,\ymax),	(0.50232918,\ymax),	(0.61020135,\ymax),	(0.71264489,\ymax),	(0.8500828,\ymax),	(1.02968823,\ymax)}
    {\node[blue] at \Point {\scalebox{0.8}{\textbullet}};}

    \foreach \Point in {(-1.14646463,\ymin),	(-0.99971466,\ymin),	(-0.90657558,\ymin),	(-0.82962533,\ymin),	(-0.77237834,\ymin),	(-0.71672635,\ymin),	(-0.67362885,\ymin),	(-0.62805564,\ymin),	(-0.59238323,\ymin),	(-0.55264384,\ymin),	(-0.51480862,\ymin),	(-0.48511135,\ymin),	(-0.45015468,\ymin),	(-0.41620275,\ymin),	(-0.38969522,\ymin),	(-0.35697216,\ymin),	(-0.32493479,\ymin),	(-0.29321229,\ymin),	(-0.26786421,\ymin),	(-0.23646469,\ymin),	(-0.20502001,\ymin),	(-0.17341807,\ymin),	(-0.14761939,\ymin),	(-0.11546214,\ymin),	(-0.08281329,\ymin),	(-0.04955084,\ymin),	(-0.0155448,\ymin),	(0.01934528,\ymin),	(0.04921232,\ymin),	(0.08619296,\ymin),	(0.12455718,\ymin),	(0.16451801,\ymin),	(0.20631837,\ymin),	(0.25023775,\ymin),	(0.29660064,\ymin),	(0.34578711,\ymin),	(0.39824637,\ymin),	(0.45451446,\ymin),	(0.51523741,\ymin),	(0.58120235,\ymin),	(0.63759884,\ymin),	(0.69716942,\ymin),	(0.74428202,\ymin),	(0.7944517,\ymin),	(0.84851728,\ymin),	(0.89431988,\ymin),	(0.9490498,\ymin),	(1.00425512,\ymin),	(1.08372289,\ymin),	(1.19772885,\ymin)}	
    {\node[red] at \Point {\scalebox{0.8}{\textbullet}};}
    
    \foreach \Point in {(-1.03095825,\ymin),	(-0.87993051,\ymin),	(-0.78236324,\ymin),	(-0.70785105,\ymin),	(-0.64640261,\ymin),	(-0.59341572,\ymin),	(-0.54636791,\ymin),	(-0.5037178,\ymin),	(-0.46444975,\ymin),	(-0.42785622,\ymin),	(-0.39342255,\ymin),	(-0.36076114,\ymin),	(-0.32957152,\ymin),	(-0.29961493,\ymin),	(-0.27069754,\ymin),	(-0.24265887,\ymin),	(-0.21536365,\ymin),	(-0.18869597,\ymin),	(-0.16255486,\ymin),	(-0.13685094,\ymin),	(-0.11150392,\ymin),	(-0.08644042,\ymin),	(-0.06159239,\ymin),	(-0.03689564,\ymin),	(-0.01228863,\ymin),	(0.01228863,\ymin),	(0.03689564,\ymin),	(0.06159239,\ymin),	(0.08644042,\ymin),	(0.11150392,\ymin),	(0.13685094,\ymin),	(0.16255486,\ymin),	(0.18869597,\ymin),	(0.21536365,\ymin),	(0.24265887,\ymin),	(0.27069754,\ymin),	(0.29961493,\ymin),	(0.32957152,\ymin),	(0.36076114,\ymin),	(0.39342255,\ymin),	(0.42785622,\ymin),	(0.46444975,\ymin),	(0.5037178,\ymin),	(0.54636791,\ymin),	(0.59341572,\ymin),	(0.64640261,\ymin),	(0.70785105,\ymin),	(0.78236324,\ymin),	(0.87993051,\ymin),	(1.03095825,\ymin)}	
    {\node[green] at \Point {\scalebox{1.5}{$\star$}};}
\end{tikzpicture}
\caption{$N=50,\quad \gamma=0.1$}\label{fig:opt_arrival_departure_2a}
\end{subfigure}

 \bigskip
 
\begin{subfigure}{\columnwidth}
\begin{tikzpicture}[xscale=1.9,yscale=1.6]
  \def\xmin{-3.5}
  \def\xmax{3.5}
  \def\ymin{0}
  \def\ymax{1}
    \draw[->] (\xmin,\ymin) -- (\xmax,\ymin) node[right] 
    {$\left\{\def\arraystretch{0.8}\begin{array}{c} \textcolor{red}{d_i=\bullet} \\ \textcolor{green}{d_i^*=\star} \end{array}\right.$} ;
    \draw[->] (\xmin,\ymax) -- (\xmax,\ymax) node[right] {$\textcolor{blue}{a_i=\bullet}$} ;
     \foreach \x in {-3,-2,-1,0,1,2,3}
    	\node at (\x,\ymin) [below] {\x};
    
\draw[smooth,black] (-1.80831395,\ymax)--(-1.5407740366,\ymin);
\draw[smooth,black] (-1.73559169,\ymax)--(-1.4221760081,\ymin);
\draw[smooth,black] (-1.67740981,\ymax)--(-1.3285556754,\ymin);
\draw[smooth,black] (-1.63298963,\ymax)--(-1.2575656203,\ymin);
\draw[smooth,black] (-1.59559988,\ymax)--(-1.1991669375,\ymin);
\draw[smooth,black] (-1.56032627,\ymax)--(-1.1455667152,\ymin);
\draw[smooth,black] (-1.52209531,\ymax)--(-1.0884658587,\ymin);
\draw[smooth,black] (-1.47929079,\ymax)--(-1.0256488228,\ymin);
\draw[smooth,black] (-1.45222305,\ymax)--(-0.986631627,\ymin);
\draw[smooth,black] (-1.41620396,\ymax)--(-0.9369318,\ymin);
\draw[smooth,black] (-1.3797996,\ymax)--(-0.8876246187,\ymin);
\draw[smooth,black] (-1.34161644,\ymax)--(-0.8368773061,\ymin);
\draw[smooth,black] (-1.31104585,\ymax)--(-0.7980010512,\ymin);
\draw[smooth,black] (-1.25723417,\ymax)--(-0.7309025443,\ymin);
\draw[smooth,black] (-1.21888731,\ymax)--(-0.6840365497,\ymin);
\draw[smooth,black] (-1.17981666,\ymax)--(-0.6383731913,\ymin);
\draw[smooth,black] (-1.11935429,\ymax)--(-0.5691692723,\ymin);
\draw[smooth,black] (-1.07362554,\ymax)--(-0.5179333691,\ymin);
\draw[smooth,black] (-1.02880368,\ymax)--(-0.4699451873,\ymin);
\draw[smooth,black] (-0.95873791,\ymax)--(-0.3965822071,\ymin);
\draw[smooth,black] (-0.89348502,\ymax)--(-0.3297962301,\ymin);
\draw[smooth,black] (-0.83701451,\ymax)--(-0.2738661859,\ymin);
\draw[smooth,black] (-0.76593366,\ymax)--(-0.2060510462,\ymin);
\draw[smooth,black] (-0.67629979,\ymax)--(-0.1225988223,\ymin);
\draw[smooth,black] (-0.58643658,\ymax)--(-0.0410011116,\ymin);
\draw[smooth,black] (-0.54067663,\ymax)--(-0.0008756756,\ymin);
\draw[smooth,black] (-0.4431564,\ymax)--(0.0834980898,\ymin);
\draw[smooth,black] (-0.35475281,\ymax)--(0.1595106199,\ymin);
\draw[smooth,black] (-0.30326162,\ymax)--(0.2034683076,\ymin);
\draw[smooth,black] (-0.22282614,\ymax)--(0.270527066,\ymin);
\draw[smooth,black] (-0.14566946,\ymax)--(0.3352935063,\ymin);
\draw[smooth,black] (-0.08856868,\ymax)--(0.3825705627,\ymin);
\draw[smooth,black] (-0.02555164,\ymax)--(0.4347458369,\ymin);
\draw[smooth,black] (0.0629653,\ymax)--(0.5075352937,\ymin);
\draw[smooth,black] (0.11247248,\ymax)--(0.5487023127,\ymin);
\draw[smooth,black] (0.20189597,\ymax)--(0.6223311962,\ymin);
\draw[smooth,black] (0.26919447,\ymax)--(0.6773934905,\ymin);
\draw[smooth,black] (0.36152374,\ymax)--(0.7523419165,\ymin);
\draw[smooth,black] (0.43092766,\ymax)--(0.8082692271,\ymin);
\draw[smooth,black] (0.52995165,\ymax)--(0.8874574317,\ymin);
\draw[smooth,black] (0.60351463,\ymax)--(0.9458951951,\ymin);
\draw[smooth,black] (0.69317342,\ymax)--(1.0162340611,\ymin);
\draw[smooth,black] (0.79384568,\ymax)--(1.0945454611,\ymin);
\draw[smooth,black] (0.8774979,\ymax)--(1.1589501547,\ymin);
\draw[smooth,black] (0.97539756,\ymax)--(1.2332985741,\ymin);
\draw[smooth,black] (1.0833947,\ymax)--(1.3150446519,\ymin);
\draw[smooth,black] (1.15960723,\ymax)--(1.3720438262,\ymin);
\draw[smooth,black] (1.27042357,\ymax)--(1.4546600683,\ymin);
\draw[smooth,black] (1.38246486,\ymax)--(1.5383177315,\ymin);
\draw[smooth,black] (1.50762976,\ymax)--(1.6326063826,\ymin);

 \foreach \Point in {(-1.80831395,\ymax),	(-1.73559169,\ymax),	(-1.67740981,\ymax),	(-1.63298963,\ymax),	(-1.59559988,\ymax),	(-1.56032627,\ymax),	(-1.52209531,\ymax),	(-1.47929079,\ymax),	(-1.45222305,\ymax),	(-1.41620396,\ymax),	(-1.3797996,\ymax),	(-1.34161644,\ymax),	(-1.31104585,\ymax),	(-1.25723417,\ymax),	(-1.21888731,\ymax),	(-1.17981666,\ymax),	(-1.11935429,\ymax),	(-1.07362554,\ymax),	(-1.02880368,\ymax),	(-0.95873791,\ymax),	(-0.89348502,\ymax),	(-0.83701451,\ymax),	(-0.76593366,\ymax),	(-0.67629979,\ymax),	(-0.58643658,\ymax),	(-0.54067663,\ymax),	(-0.4431564,\ymax),	(-0.35475281,\ymax),	(-0.30326162,\ymax),	(-0.22282614,\ymax),	(-0.14566946,\ymax),	(-0.08856868,\ymax),	(-0.02555164,\ymax),	(0.0629653,\ymax),	(0.11247248,\ymax),	(0.20189597,\ymax),	(0.26919447,\ymax),	(0.36152374,\ymax),	(0.43092766,\ymax),	(0.52995165,\ymax),	(0.60351463,\ymax),	(0.69317342,\ymax),	(0.79384568,\ymax),	(0.8774979,\ymax),	(0.97539756,\ymax),	(1.0833947,\ymax),	(1.15960723,\ymax),	(1.27042357,\ymax),	(1.38246486,\ymax),	(1.50762976,\ymax)}
    {\node[blue] at \Point {\scalebox{0.8}{\textbullet}};}

    \foreach \Point in {(-1.5407740366,\ymin),	(-1.4221760081,\ymin),	(-1.3285556754,\ymin),	(-1.2575656203,\ymin),	(-1.1991669375,\ymin),	(-1.1455667152,\ymin),	(-1.0884658587,\ymin),	(-1.0256488228,\ymin),	(-0.986631627,\ymin),	(-0.9369318,\ymin),	(-0.8876246187,\ymin),	(-0.8368773061,\ymin),	(-0.7980010512,\ymin),	(-0.7309025443,\ymin),	(-0.6840365497,\ymin),	(-0.6383731913,\ymin),	(-0.5691692723,\ymin),	(-0.5179333691,\ymin),	(-0.4699451873,\ymin),	(-0.3965822071,\ymin),	(-0.3297962301,\ymin),	(-0.2738661859,\ymin),	(-0.2060510462,\ymin),	(-0.1225988223,\ymin),	(-0.0410011116,\ymin),	(-0.0008756756,\ymin),	(0.0834980898,\ymin),	(0.1595106199,\ymin),	(0.2034683076,\ymin),	(0.270527066,\ymin),	(0.3352935063,\ymin),	(0.3825705627,\ymin),	(0.4347458369,\ymin),	(0.5075352937,\ymin),	(0.5487023127,\ymin),	(0.6223311962,\ymin),	(0.6773934905,\ymin),	(0.7523419165,\ymin),	(0.8082692271,\ymin),	(0.8874574317,\ymin),	(0.9458951951,\ymin),	(1.0162340611,\ymin),	(1.0945454611,\ymin),	(1.1589501547,\ymin),	(1.2332985741,\ymin),	(1.3150446519,\ymin),	(1.3720438262,\ymin),	(1.4546600683,\ymin),	(1.5383177315,\ymin),	(1.6326063826,\ymin)}	
    {\node[red] at \Point {\scalebox{0.8}{\textbullet}};}
    
    \foreach \Point in {(-1.03095825,\ymin),	(-0.87993051,\ymin),	(-0.78236324,\ymin),	(-0.70785105,\ymin),	(-0.64640261,\ymin),	(-0.59341572,\ymin),	(-0.54636791,\ymin),	(-0.5037178,\ymin),	(-0.46444975,\ymin),	(-0.42785622,\ymin),	(-0.39342255,\ymin),	(-0.36076114,\ymin),	(-0.32957152,\ymin),	(-0.29961493,\ymin),	(-0.27069754,\ymin),	(-0.24265887,\ymin),	(-0.21536365,\ymin),	(-0.18869597,\ymin),	(-0.16255486,\ymin),	(-0.13685094,\ymin),	(-0.11150392,\ymin),	(-0.08644042,\ymin),	(-0.06159239,\ymin),	(-0.03689564,\ymin),	(-0.01228863,\ymin),	(0.01228863,\ymin),	(0.03689564,\ymin),	(0.06159239,\ymin),	(0.08644042,\ymin),	(0.11150392,\ymin),	(0.13685094,\ymin),	(0.16255486,\ymin),	(0.18869597,\ymin),	(0.21536365,\ymin),	(0.24265887,\ymin),	(0.27069754,\ymin),	(0.29961493,\ymin),	(0.32957152,\ymin),	(0.36076114,\ymin),	(0.39342255,\ymin),	(0.42785622,\ymin),	(0.46444975,\ymin),	(0.5037178,\ymin),	(0.54636791,\ymin),	(0.59341572,\ymin),	(0.64640261,\ymin),	(0.70785105,\ymin),	(0.78236324,\ymin),	(0.87993051,\ymin),	(1.03095825,\ymin)}	
    {\node[green] at \Point {\scalebox{1.5}{$\star$}};}
    
\end{tikzpicture}
\caption{$N=50,\quad \gamma=1$}\label{fig:opt_arrival_departure_2b}
\end{subfigure}

\bigskip
 
\begin{subfigure}{\columnwidth}
\begin{tikzpicture}[xscale=1.9,yscale=1.6]
  \def\xmin{-3.5}
  \def\xmax{3.5}
  \def\ymin{0}
  \def\ymax{1}
    \draw[->] (\xmin,\ymin) -- (\xmax,\ymin) node[right] 
    {$\left\{\def\arraystretch{0.8}\begin{array}{c} \textcolor{red}{d_i=\bullet} \\ \textcolor{green}{d_i^*=\star} \end{array}\right.$} ;
    \draw[->] (\xmin,\ymax) -- (\xmax,\ymax) node[right] {$\textcolor{blue}{a_i=\bullet}$} ;
     \foreach \x in {-3,-2,-1,0,1,2,3}
    	\node at (\x,\ymin) [below] {\x};
    
\draw[smooth,black] (-3.17606026,\ymax)--(-3.1253581,\ymin);
\draw[smooth,black] (-3.06048684,\ymax)--(-2.99751134,\ymin);
\draw[smooth,black] (-2.87667122,\ymax)--(-2.79534456,\ymin);
\draw[smooth,black] (-2.74927388,\ymax)--(-2.65646991,\ymin);
\draw[smooth,black] (-2.55768783,\ymax)--(-2.44851659,\ymin);
\draw[smooth,black] (-2.42290212,\ymax)--(-2.30383828,\ymin);
\draw[smooth,black] (-2.2540116,\ymax)--(-2.1238118,\ymin);
\draw[smooth,black] (-2.12525987,\ymax)--(-1.98784034,\ymin);
\draw[smooth,black] (-1.99761317,\ymax)--(-1.85303573,\ymin);
\draw[smooth,black] (-1.91310688,\ymax)--(-1.76379077,\ymin);
\draw[smooth,black] (-1.79524643,\ymax)--(-1.64152053,\ymin);
\draw[smooth,black] (-1.65657184,\ymax)--(-1.49671346,\ymin);
\draw[smooth,black] (-1.59055562,\ymax)--(-1.42694777,\ymin);
\draw[smooth,black] (-1.44841856,\ymax)--(-1.28091189,\ymin);
\draw[smooth,black] (-1.30394033,\ymax)--(-1.13093155,\ymin);
\draw[smooth,black] (-1.2240502,\ymax)--(-1.04800175,\ymin);
\draw[smooth,black] (-1.12371388,\ymax)--(-0.94575426,\ymin);
\draw[smooth,black] (-0.9877443,\ymax)--(-0.80719477,\ymin);
\draw[smooth,black] (-0.85313984,\ymax)--(-0.67002824,\ymin);
\draw[smooth,black] (-0.76389301,\ymax)--(-0.57908528,\ymin);
\draw[smooth,black] (-0.64162469,\ymax)--(-0.4544918,\ymin);
\draw[smooth,black] (-0.547452,\ymax)--(-0.35852914,\ymin);
\draw[smooth,black] (-0.4956141,\ymax)--(-0.30667217,\ymin);
\draw[smooth,black] (-0.34726389,\ymax)--(-0.15680602,\ymin);
\draw[smooth,black] (-0.2808161,\ymax)--(-0.09035826,\ymin);
\draw[smooth,black] (-0.13083584,\ymax)--(0.05883618,\ymin);
\draw[smooth,black] (-0.04790419,\ymax)--(0.14022149,\ymin);
\draw[smooth,black] (0.05434139,\ymax)--(0.24055968,\ymin);
\draw[smooth,black] (0.19290088,\ymax)--(0.37653301,\ymin);
\draw[smooth,black] (0.33006733,\ymax)--(0.51113738,\ymin);
\draw[smooth,black] (0.42101028,\ymax)--(0.60038047,\ymin);
\draw[smooth,black] (0.54560371,\ymax)--(0.72264505,\ymin);
\draw[smooth,black] (0.64156827,\ymax)--(0.81509168,\ymin);
\draw[smooth,black] (0.69342524,\ymax)--(0.86504934,\ymin);
\draw[smooth,black] (0.84329135,\ymax)--(1.00941948,\ymin);
\draw[smooth,black] (0.95099269,\ymax)--(1.11392721,\ymin);
\draw[smooth,black] (1.14011701,\ymax)--(1.2976008,\ymin);
\draw[smooth,black] (1.2404552,\ymax)--(1.39432319,\ymin);
\draw[smooth,black] (1.37642853,\ymax)--(1.52539469,\ymin);
\draw[smooth,black] (1.51123298,\ymax)--(1.65533945,\ymin);
\draw[smooth,black] (1.60047607,\ymax)--(1.7398475,\ymin);
\draw[smooth,black] (1.81498728,\ymax)--(1.94460311,\ymin);
\draw[smooth,black] (1.86494491,\ymax)--(1.99278879,\ymin);
\draw[smooth,black] (2.00931502,\ymax)--(2.12962472,\ymin);
\draw[smooth,black] (2.11402463,\ymax)--(2.22886943,\ymin);
\draw[smooth,black] (2.39421881,\ymax)--(2.49155663,\ymin);
\draw[smooth,black] (2.52549039,\ymax)--(2.61400308,\ymin);
\draw[smooth,black] (2.73974332,\ymax)--(2.81194426,\ymin);
\draw[smooth,black] (2.94469897,\ymax)--(3.00023672,\ymin);
\draw[smooth,black] (3.1297207,\ymax)--(3.1696888,\ymin);

\foreach \Point in {(-3.17606026,\ymax),	(-3.06048684,\ymax),	(-2.87667122,\ymax),	(-2.74927388,\ymax),	(-2.55768783,\ymax),	(-2.42290212,\ymax),	(-2.2540116,\ymax),	(-2.12525987,\ymax),	(-1.99761317,\ymax),	(-1.91310688,\ymax),	(-1.79524643,\ymax),	(-1.65657184,\ymax),	(-1.59055562,\ymax),	(-1.44841856,\ymax),	(-1.30394033,\ymax),	(-1.2240502,\ymax),	(-1.12371388,\ymax),	(-0.9877443,\ymax),	(-0.85313984,\ymax),	(-0.76389301,\ymax),	(-0.64162469,\ymax),	(-0.547452,\ymax),	(-0.4956141,\ymax),	(-0.34726389,\ymax),	(-0.2808161,\ymax),	(-0.13083584,\ymax),	(-0.04790419,\ymax),	(0.05434139,\ymax),	(0.19290088,\ymax),	(0.33006733,\ymax),	(0.42101028,\ymax),	(0.54560371,\ymax),	(0.64156827,\ymax),	(0.69342524,\ymax),	(0.84329135,\ymax),	(0.95099269,\ymax),	(1.14011701,\ymax),	(1.2404552,\ymax),	(1.37642853,\ymax),	(1.51123298,\ymax),	(1.60047607,\ymax),	(1.81498728,\ymax),	(1.86494491,\ymax),	(2.00931502,\ymax),	(2.11402463,\ymax),	(2.39421881,\ymax),	(2.52549039,\ymax),	(2.73974332,\ymax),	(2.94469897,\ymax),	(3.1297207,\ymax)}
    {\node[blue] at \Point {\scalebox{0.8}{\textbullet}};}

    \foreach \Point in {(-3.1253581,\ymin),	(-2.99751134,\ymin),	(-2.79534456,\ymin),	(-2.65646991,\ymin),	(-2.44851659,\ymin),	(-2.30383828,\ymin),	(-2.1238118,\ymin),	(-1.98784034,\ymin),	(-1.85303573,\ymin),	(-1.76379077,\ymin),	(-1.64152053,\ymin),	(-1.49671346,\ymin),	(-1.42694777,\ymin),	(-1.28091189,\ymin),	(-1.13093155,\ymin),	(-1.04800175,\ymin),	(-0.94575426,\ymin),	(-0.80719477,\ymin),	(-0.67002824,\ymin),	(-0.57908528,\ymin),	(-0.4544918,\ymin),	(-0.35852914,\ymin),	(-0.30667217,\ymin),	(-0.15680602,\ymin),	(-0.09035826,\ymin),	(0.05883618,\ymin),	(0.14022149,\ymin),	(0.24055968,\ymin),	(0.37653301,\ymin),	(0.51113738,\ymin),	(0.60038047,\ymin),	(0.72264505,\ymin),	(0.81509168,\ymin),	(0.86504934,\ymin),	(1.00941948,\ymin),	(1.11392721,\ymin),	(1.2976008,\ymin),	(1.39432319,\ymin),	(1.52539469,\ymin),	(1.65533945,\ymin),	(1.7398475,\ymin),	(1.94460311,\ymin),	(1.99278879,\ymin),	(2.12962472,\ymin),	(2.22886943,\ymin),	(2.49155663,\ymin),	(2.61400308,\ymin),	(2.81194426,\ymin),	(3.00023672,\ymin),	(3.1696888,\ymin)}	
    {\node[red] at \Point {\scalebox{0.8}{\textbullet}};}
    
    \foreach \Point in {(-1.03095825,\ymin),	(-0.87993051,\ymin),	(-0.78236324,\ymin),	(-0.70785105,\ymin),	(-0.64640261,\ymin),	(-0.59341572,\ymin),	(-0.54636791,\ymin),	(-0.5037178,\ymin),	(-0.46444975,\ymin),	(-0.42785622,\ymin),	(-0.39342255,\ymin),	(-0.36076114,\ymin),	(-0.32957152,\ymin),	(-0.29961493,\ymin),	(-0.27069754,\ymin),	(-0.24265887,\ymin),	(-0.21536365,\ymin),	(-0.18869597,\ymin),	(-0.16255486,\ymin),	(-0.13685094,\ymin),	(-0.11150392,\ymin),	(-0.08644042,\ymin),	(-0.06159239,\ymin),	(-0.03689564,\ymin),	(-0.01228863,\ymin),	(0.01228863,\ymin),	(0.03689564,\ymin),	(0.06159239,\ymin),	(0.08644042,\ymin),	(0.11150392,\ymin),	(0.13685094,\ymin),	(0.16255486,\ymin),	(0.18869597,\ymin),	(0.21536365,\ymin),	(0.24265887,\ymin),	(0.27069754,\ymin),	(0.29961493,\ymin),	(0.32957152,\ymin),	(0.36076114,\ymin),	(0.39342255,\ymin),	(0.42785622,\ymin),	(0.46444975,\ymin),	(0.5037178,\ymin),	(0.54636791,\ymin),	(0.59341572,\ymin),	(0.64640261,\ymin),	(0.70785105,\ymin),	(0.78236324,\ymin),	(0.87993051,\ymin),	(1.03095825,\ymin)}	
    {\node[green] at \Point {\scalebox{1.5}{$\star$}};}

\end{tikzpicture}
\subcaption{$N=50,\quad \gamma=20$}\label{fig:opt_arrival_departure_2c}
\end{subfigure}

\caption{Optimal arrival-departure diagram for $\alpha=0.8/N$, $\beta=1$, and ${\mathbf d}^*$ the $N$ quantiles of a normal distribution with mean $0$ and standard deviation $1/2$.}\label{fig:opt_arrival_departure2}
\end{figure}

\begin{table}[H]
\centering
\tiny
\caption{Running time in seconds and computational steps of the combined heuristic (Algorithm ~6 with $M=1$) and the exhaustive search (Algorithm ~2).\label{tblLife}}
\begin{tabular}{|c|c|c|c|c|c|c|c|c|c|c|} \hline 
 $N$  & 3 & 5  & 10 & 11 & 12 & 14 & 15 & 20 & 30 & 50 \\ \hline 
 \multicolumn{11}{|l|}{\textbf{Combined heuristic}}  \\ \hline
CPI cycles & 3 & 2 & 4 & 5 & 4 & 3 & 3 & 3 & 3 & 4 \\ \hline
Total breakpoints & 24 & 65  & 306 & 382 & 441 & 642 & 727 & 1,383 & 3,260 & 8,636 \\ \hline
NS QPs solved & 2 & 2 & 9 & 6 & 8 & 14 & 10 & 29 & 34 & 29 \\ \hline
Running time (sec.) & 0.05 & 0.15 & 1.17 & 1.67 & 1.99 & 3.09 & 3.28 & 6.38 & 18.31 & 85.33 \\ \hline
Global opt. & Yes & Yes & Yes & Yes & Yes & Yes & Yes & NA & NA & NA \\ \hline  
 \multicolumn{11}{|l|}{\textbf{Exhaustive search}}  \\ \hline
$|\mathcal{K}|$ QPs solved & 5 & 42 & 16,796 & 58,786 & 208,012 & $ 2.6\cdot 10^6$ & $9.7\cdot 10^6$ & $ 6.6\cdot 10^9$ & $ 3.8\cdot 10^{15}$ & $ 2\cdot 10^{27}$ \\ \hline
Running time (sec.) & 0.00 & 0.05 & 25.25 & 162.41 & 509 & 8,206 & 39,454 & NA & NA & NA \\ \hline
\end{tabular}
\label{tbl:iter_compare}
\end{table}


To further investigate our combined heuristic, in Figure \ref{fig:alg_iterations} we illustrate the number CPI cycles and breakpoints, along with the respective number of quadratic programs solved by the neighbour search, until convergence of Algorithm 6. The problem data was scaled as in the previous example. For every $N$ the initial points given by $\mathcal{A}$ with $M=5$ distinct initial points. The figure displays the minimum and maximum values out of the $5$ initial points. Note that for every $N$ the algorithm converged to the same local minimum for all initial points in $\mathcal{A}$. 

We can see that the number of required CPI cycles was small and stabilized on $2$ regardless of the number of users. However, we should take into account that the number of coordinate iterations in every cycle is $N$, and that the complexity of each iteration also grows with $N$. Specifically, Proposition \ref{prop:alg4_complexity} shows that the number of breakpoints for every coordinate in the CPI is at most $N^3$, but in the example we see the growth is in effect linear ($\sim 3N$). Furthermore, the number of required quadratic programs solved in the neighbour search also grows linearly ($\sim \frac{1}{3}N$). This hints that the CPI does indeed find a point that is very ``close" to a local minimum. The widening gap between the minimum and maximum number of NS iterations suggests that some of the initial points are better than others, and thus it is worthwhile trying several of them. The last point is important when solving for even larger values of $N$ as the algorithm becomes more sensitive to ``bad" initial points and may require setting a maximum number of iterations parameter for every initial point. {Roughly, when $\gamma$ and $\alpha$ are both small, starting closer to $\mathbf{a}^0$ is better and when they are both large, starting closer to $\mathbf{a}^\infty$ is better. However, for most combinations of parameters there seems to be no a-priori indication of what is a ``good" starting point. Thus it is still beneficial to do the full search on $\mathcal{A}$.} Again we stress that the behaviour displayed in Figure \ref{fig:alg_iterations} was robust with respect to changes in the model parameters.

\begin{figure}[H]
\begin{subfigure}{.45\linewidth}
\begin{tikzpicture}[y=0.45cm,x=0.02cm]
  \def\xmin{0}
  \def\xmax{305}
  \def\ymin{0}
  \def\ymax{8}
   
    \draw[->] (\xmin,\ymin) -- (\xmax,\ymin) node[right] {$N$} ;
    \draw[->] (0,\ymin) -- (0,\ymax) node[above] {} ;
    \foreach \x in {0,50,100,150,200,250,300}
    \node at (\x,\ymin) [below] {\x};
    \foreach \y in {2,4,6,8}
    \node at (0,\y) [left] {\y};

	\draw[smooth,red,thick] (10,4)--	(20,6)--	(50,4)--	(75,6)--	(100,2)--	(125,2)--	(150,2)--	(175,2)--	(200,2)--	(225,2)--	(250,2)--	(275,2)--	(300,2);

	    \foreach \Point in {(10,4),	(20,6),	(50,4),	(75,6),	(100,2),	(125,2),	(150,2),	(175,2),	(200,2),	(225,2),	(250,2),	(275,2),	(300,2)}
    {\node[red] at \Point {\textbullet};}
	
	\draw[smooth,blue,thick] (10,3)--	(20,3)--	(50,3)--	(75,2)--	(100,2)--	(125,2)--	(150,2)--	(175,2)--	(200,2)--	(225,2)--	(250,2)--	(275,2)--	(300,2);

	    \foreach \Point in {(10,3),	(20,3),	(50,3),	(75,2),	(100,2),	(125,2),	(150,2),	(175,2),	(200,2),	(225,2),	(250,2),	(275,2),	(300,2)}
    {\node[blue] at \Point {$\star$};}
	
\end{tikzpicture}
\subcaption{CPI Cycles}\label{fig:alg_iterations_a}
\end{subfigure}
\begin{subfigure}{.45\linewidth}
\begin{tikzpicture}[y=0.0035cm,x=0.02cm]
  \def\xmin{0}
  \def\xmax{305}
  \def\ymin{0}
  \def\ymax{1000}
  
    \draw[->] (\xmin,\ymin) -- (\xmax,\ymin) node[right] {$N$} ;
    \draw[->] (0,\ymin) -- (0,\ymax) node[above] {} ;
    \foreach \x in {0,50,100,150,200,250,300}
    	\node at (\x,\ymin) [below] {\x};
    \foreach \y in {100,200,300,400,500,600,700,800,900}
    	\node at (0,\y) [left] {\y};

	\draw[smooth,red,thick] (10,101)--	(20,83)--	(50,189)--	(75,264)--	(100,337)--	(125,412)--	(150,481)--	(175,564)--	(200,642)--	(225,724)--	(250,792)--	(275,869)--	(300,946);

		    \foreach \Point in {(10,101),	(20,83),	(50,189),	(75,264),	(100,337),	(125,412),	(150,481),	(175,564),	(200,642),	(225,724),	(250,792),	(275,869),	(300,946)}
    {\node[red] at \Point {\textbullet};}
    
    \draw[smooth,blue,thick] (10,25)--	(20,56)--	(50,146)--	(75,219)--	(100,295)--	(125,369)--	(150,444)--	(175,517)--	(200,587)--	(225,667)--	(250,743)--	(275,809)--	(300,885);

		    \foreach \Point in {(10,25),	(20,56),	(50,146),	(75,219),	(100,295),	(125,369),	(150,444),	(175,517),	(200,587),	(225,667),	(250,743),	(275,809),	(300,885)}
    {\node[blue] at \Point {$\star$};}	
    
\end{tikzpicture}
\subcaption{CPI Breakpoints}\label{fig:alg_iterations_b}
\end{subfigure}

\bigskip 
\centering
\begin{subfigure}{.5\linewidth}
\begin{tikzpicture}[y=0.03cm,x=0.02cm]
  \def\xmin{0}
  \def\xmax{305}
  \def\ymin{0}
  \def\ymax{120}
    \draw[->] (\xmin,\ymin) -- (\xmax,\ymin) node[right] {$N$} ;
    \draw[->] (0,\ymin) -- (0,\ymax) node[above] {} ;
    \foreach \x in {0,50,100,150,200,250,300}
    \node at (\x,\ymin) [below] {\x};
    \foreach \y in {20,40,60,80,100,120}
    \node at (0,\y) [left] {\y};

	\draw[smooth,red,thick] (10,10)--	(20,50)--	(50,51)--	(75,97)--	(100,59)--	(125,59)--	(150,58)--	(175,86)--	(200,75)--	(225,108)--	(250,89)--	(275,111)--	(300,115);	
	
		    \foreach \Point in {(10,10),	(20,50),	(50,51),	(75,97),	(100,59),	(125,59),	(150,58),	(175,86),	(200,75),	(225,108),	(250,89),	(275,111),	(300,115)}
    {\node[red] at \Point {\textbullet};}
    
    \draw[smooth,blue,thick] (10,7)--	(20,28)--	(50,21)--	(75,48)--	(100,17)--	(125,14)--	(150,15)--	(175,24)--	(200,28)--	(225,24)--	(250,22)--	(275,21)--	(300,20);

		    \foreach \Point in {(10,7),	(20,28),	(50,21),	(75,48),	(100,17),	(125,14),	(150,15),	(175,24),	(200,28),	(225,24),	(250,22),	(275,21),	(300,20)}
    {\node[blue] at \Point {$\star$};}
	
\end{tikzpicture}
\subcaption{NS iterations}\label{fig:alg_iterations_c}
\end{subfigure}
\begin{subfigure}[b]{.2\linewidth}
\begin{tikzpicture}[scale=10]
    \begin{customlegend}
    [legend entries={Maximum, Minimum},
    legend style={at={(1.4,0.5)}}]
	    \addlegendimage{red,thick,mark=*}
    	\addlegendimage{blue,thick,mark=star}
    \end{customlegend}
\end{tikzpicture}
\end{subfigure}
\caption{Number of iterations in each component of Algorithm 6 as a function of $N$.}
\label{fig:alg_iterations}
\end{figure}
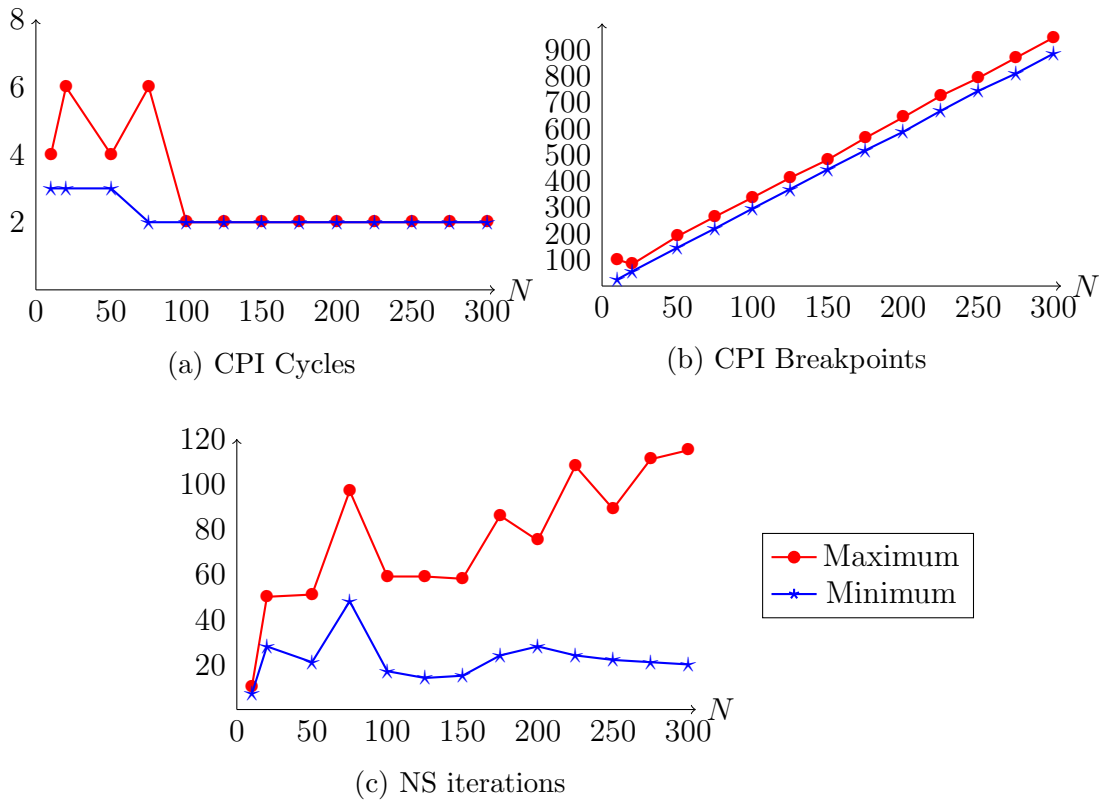

\section{Conclusion and Outlook}
\label{sec:Conclusion}
We presented a model for a discrete-user deterministic processor sharing system, and addressed the problem of scheduling arrivals to such a system with the goal of minimizing congestion and tardiness costs. A full characterisation of the congestion dynamics and an efficient method for computing them was provided. It was further shown that the optimal arrival schedule can be computed in a finite, but exponentially large, number of steps. Several heuristics were therefore developed with the goal of an efficient computation of the optimal schedule. A combined global and local search heuristic was presented and numerically analysed. This method was shown to be efficient in numerical examples for a large population of users.

The essential parts of our analysis and results applies for a much more general cost formulation, as we shall next detail. Given that user $i$ enters the system at time $a_i$ and leaves at time $d_i > a_i$, a plausible cost incurred by the user is the following:
\begin{align}\label{eq:costUser}
c_i(a_i,d_i) &= g_i^{(1)}\Big((d_i-d_i^*)^+\Big)\,+\, g_i^{(2)}\Big((d_i^*-d_i)^+\Big) \\ 
 & +g_i^{(3)}\Big((a_i-a_i^*)^+\Big)\,+\, g_i^{(4)}\Big((a_i^*-a_i)^+\Big) \nonumber \\ 
 & +g_i^{(5)}\Big(d_i - a_i \Big), \nonumber
\end{align}
where $(x)^+ := \max(x,0)$, and $g_i^{(j)}(\cdot), j\in\{1,\dots 5\},~i \in {\cal N}$ are some convex functions.

The first and third terms of \eqref{eq:costUser} capture the penalty for being late to the ideal departure and arrival times $d_i^*$ and $a_i^*$, respectively. The second and fourth terms are the user's cost for arriving and departing early. The fifth term is the user's cost for travel/usage of the system. Our algorithm and results in this paper hold with slight technical modifications for arbitrary convex $g_i^{(j)}(\cdot)$. For purpose of exposition, we focused on, $g_i^{(1)}(x)=g_i^{(2)}(x)=x^2$, $g_i^{(3)}(x)=g_i^{(4)}(x)=0$ and $g_i^{(5)}(x)=\gamma \, x$. If adapted to the more general formulation, The exhaustive and neighbour search algorithms of Section \ref{sec:optRegion} will generally require solving a constrained convex program, instead of convex quadratic, for every region. If $g_i^{(1)}(x)\neq g_i^{(2)}(x)$ and/or $g_i^{(3)}(x)\neq g_i^{(4)}(x)$, namely there are different penalties for arriving/departing later and early, then the CPI algorithm of Section \ref{sec:CPIAlg} will require some refinement of the definition of the piecewise segments. The complexity will not change as for every single coordinate there will be an addition of at most three segments, corresponding for these new points of discontinuity. Moreover, the root of the first order condition in every continuous segment will be given by the general form of the functions, instead of the quadratic root. 

An interesting generalization is considering a system with users who have heterogeneous service demand. If this is the case then the order of departures is no longer identical to the order of arrivals. This means that the characterisation of Proposition \ref{Lemma_departure_function} is no longer valid. 

A natural complementary model to this work is considering a decentralized decision framework in which the users choose their own arrival time. Namely, a non cooperative game with the individual arrival times are the actions of the players. This game is formulated and analysed in \cite{RHV2016}.

Finally, there is the challenge of characterising the computational complexity of our scheduling problem. We believe that finding the optimal ${\mathbf k}^* \in {\cal K}$ is an NP-complete problem but we still do not have a proof for this. Our belief is motivated (but not supported) by the fact that there are a number of related optimization problems which are known to be NP hard. Our problem is equivalent to a special case of one of them, namely non-linear integer programming.

As we have shown, our goal is to minimize a non-convex piecewise quadratic objective function, subject to piecewise linear constraints. It is known that non-convex quadratic programs and non-convex piecewise linear optimization are both NP hard (see \cite{KFM2006} and \cite{MK1987}). In \cite{VAN2010} it is shown that piecewise linear optimization problems can be modelled as linear mixed integer programs, where the definition of a piecewise linear program relies on different coefficients for different polytopes, in a similar manner to our piecewise quadratic formulation in Section \ref{sec:optRegion}. It may be possible to apply similar methods with modifications for the piecewise convex instead of linear objective. However, there is a more natural construction for our case. Let $\tilde{\mathbf{a}}(\mathbf{k})$ be the solution to $QP(\mathbf{k})$, i.e., the solution to the local convex QP of a polytope $\mathbf{k}\in\mathcal{K}$. But this can also be viewed as a function of the integer vector $\mathbf{k}$ which we can compute in polynomial time. Hence, solving our problem in polynomial time is equivalent to solving the non-linear integer program:
\[
\min_{\mathbf{k}\in\mathcal{K}}\tilde{\mathbf{a}}(\mathbf{k})'Q_{\mathbf k} \tilde{\mathbf{a}}(\mathbf{k})+\mathbf{b}_{\mathbf k} \, \tilde{\mathbf{a}}(\mathbf{k})+{\tilde{b}}_{\mathbf k}.
\]

Recall that $\mathcal{K}=\left\lbrace\mathbf{k}\in{\cal N}^N \,:\, k_N=N,\, k_i \le k_{j} \ \forall i \le j\right\rbrace$ defines a set of linear constraints on the integer decision variables. Clearly the objective is not linear with respect to $\mathbf{k}$, as $\tilde{\mathbf{a}}(\mathbf{k})$ itself is already not necessarily linear. Such problems are known to be NP hard. See for example, \cite{LHK2006} and \cite{PDM2016}. Although we could not find a straightforward reduction of the problem to a known NP hard problem, we have shown that our problem can be formulated as an (rather cumbersome) instance of a polynomial integer program, and have no reason to believe that the specific model comes with significant simplification of the general form. 

As a closing note we mention that it is generally of interest to compare our heuristics to potential integer programming methods. One may either discretize time and solve integer programs, or alternatively seek related integer programming formulations. It remains an open problem to compare our heuristics to such potential methods both in terms of accuracy and computation time.


{\bf Acknowledgements:} We thank Hai Vu and Moshe Haviv for useful discussions and advice. We are grateful to two anonymous reviewers for their helpful comments. We thank The Australia-Israel Scientific Exchange Foundation (AISEF) for supporting Liron Ravner's visit to The University of Queensland. Yoni Nazarathy's research is supported by ARC grants DE130100291 and DP130100156.

\appendix
\section{Proofs}

{\bf Proof of Lemma~\ref{Lemma:departure_order}}:
Consider two arrivals $a_{i} \le a_{j}$. During the time interval $[a_{i},a_{j}]$, user $i$ has received some service,
\[
\int_{a_{i}}^{a_{j}} v\big(q(t)\big) dt,
\]
while user $j$ has not. Then during the time interval $[a_{j}, d_{i} \wedge d_{j}]$ both users receive the same service, $\int_{a_{j}}^{d_{i} \wedge d_{j}} v\big(q(t)\big) dt$. Then if $d_i > d_j$ we have that $\int_{a_{j}}^{d_{i} \wedge d_{j}} v\big(q(t)\big) dt = 1$, which in turn would imply that,
\[
\int_{a_{i}}^{d_{i}} v\big(q(t)\big) dt =  \int_{a_{i}}^{a_{j}} v\big(q(t)\big) dt + \int_{a_{j}}^{d_{i} \wedge d_{j}} v\big(q(t)\big) dt +\int_{d_{i} \wedge d_{j}}^{d_{i}} v\big(q(t)\big) dt > 1, 
\]
a contradiction. Hence $d_i \le d_j$.
\qed
\vspace{5mm}

\noindent
{\bf Proof of Lemma~\ref{lemma:unique}}:
Without loss of generality assume $a_1 \le \ldots \le a_N$ and hence by the previous lemma, ${\mathbf d}$ is ordered. Assume now that there exists a $\tilde{\mathbf d} \neq {\mathbf d}$ and define $i = \min\{ i \, : \, \tilde{d}_i \neq d_i \}$. Without loss of generality, assume that $d_i < \tilde{d}_i$. Using \eqref{eq:dynamics} it holds that,
\[
\int_{a_i}^{d_i} v \big(q(t)\big) \, dt =1= \int_{a_i}^{d_i} v \big( \tilde{q}(t) \big) \, dt + \int_{d_i}^{\tilde{d}_i} v \big( \tilde{q}(t) \big) \, dt.
\]
Now since for all $t \le d_i$ it holds that $q(t) = \tilde{q}(t)$, then,
\[
\int_{d_i}^{\tilde{d}_i} v \big( \tilde{q}(t) \big) \, dt = 0.
\]
A contradiction. \\

Now there exists a full symmetry between ${\mathbf a}$ and ${\mathbf d}$, hence going in the opposite direction (for every ${\mathbf d}$ there exists a unique ${\mathbf a}$) follows a similar argument to the above.
\qed
\vspace{5mm}

\noindent
{\bf Proof of Lemma~\ref{Lemma:optimal_order}}:
We first argue that an optimal arrival must be ordered ($ a_1 \le \ldots \le a_N$) by means of an interchange argument. Assume this is not the case, i.e. $\mathbf{a}$ is an optimal arrival schedule such that $a_i>a_j$ for some $i<j$ (such that $d_i^*<d_j^*$). If we switch between the arrival times of users $i$ and $j$: $\tilde{a_i}=a_j$ and $\tilde{a_j}=a_i$, while not changing any other arrival time, then because all users have the same service demand the departure times of all other users do not change. Consequently, the departure times are also switched: $\tilde{d_i}=d_j$ and $\tilde{d_j}=d_i$. Therefore, the only change in the total cost function is the change in the cost incurred by $i$ and $j$ themselves. The change in the cost incurred by user $i$ is given by \eqref{eq:costUserShort}:
\begin{equation}\label{eq:cost_change_i}
\begin{split}
{c_i}(\tilde{a_i},\tilde{d_i})-c_i(a_i,d_i) &=\big(\tilde{d_i}-d_i^*\big)^2+\gamma\big(\tilde{d_i}-\tilde{a_i}\big)-(d_i-d_i^*)^2-\gamma(d_i-a_i) \\
&= (d_j-d_i^*)^2+\gamma(d_j-a_j)-(d_i-d_i^*)^2-\gamma(d_i-a_i)
\end{split},
\end{equation}
and for user $j$:
\begin{equation}\label{eq:cost_change_j}
\begin{split}
{c_j}(\tilde{a}_j,\tilde{d}_j)-c_j(a_j,d_j) &=\big(\tilde{d_j}-d_j^*\big)^2+\gamma\big(\tilde{d_j}-\tilde{a}_j\big)-(d_j-d_j^*)^2-\gamma(d_j-a_j) \\
&= (d_i-d_j^*)^2+\gamma(d_i-a_i)-(d_j-d_j^*)^2-\gamma(d_j-a_j)
\end{split}.
\end{equation}
Summing \eqref{eq:cost_change_i} and \eqref{eq:cost_change_j} we obtain that the total change in cost is 
\begin{equation*}
2\big(d_i^*-d_j^*\big)\big(d_i-d_j\big).
\end{equation*} 
From Lemma \ref{Lemma:departure_order} we know that if $a_i>a_j$ then $d_i>d_j$, and that by definition $d_j^*> d_i^*$, hence the change in the total cost function is negative which contradicts the assumption that the schedule is optimal. In conclusion, any unordered schedule can be improved by a simple interchange of a pair of unordered coordinates, and therefore an optimal schedule must be ordered.

The slowest service rate occurs when all $N$ users are present in the system, and therefore the longest possible sojourn time is $\frac{1}{\beta-\alpha(N-1)}$. The total time required to clear all users from the system is then coarsely upper bounded by $\frac{N}{\beta-\alpha(N-1)}$. A schedule such that $a_1<\underline{a}$ is clearly not optimal, since a trivial improvement can always be achieved by setting $a_1=\underline{a}$ and shifting to the right the arrival times of any user that overlap due to the change in $a_1$. We are guaranteed this is possible by the fact that all users can arrive and leave the system in the interval $[\underline{a},d_1^*]$, without any overlaps. Clearly, the deviation from ideal times can only decrease when making this change, while the sojourn times remain unchanged. The coarse upper bound $\overline{a}$ holds for the same reasons. ~\qed
\vspace{5mm}

\noindent
{\bf Proof of Proposition~\ref{prop:alg1_complexity}}:
The proof is for Algorithm~1a. The argument for Algorithm~1b follows the same arguments.
For every user $i$, iterating on all possible values of $k_i\in\mathcal{N}$ ensures that every possible departure interval $[a_k,a_{k+1})$ is checked. In a sense, this is an exhaustive search on all solutions that satisfy the dynamics given by Proposition \ref{Lemma_departure_function}. Therefore, the algorithm will always converge to the unique solution.

Given a vector of arrivals ${\mathbf a} \in \mathbbm{R}^N$, for every $i\in\mathcal{N}$, the departure time $d_i$ occurs in one of the above defined partitions $[a_k,a_{k+1}), \ k\in\mathcal{N}$. The total number of steps will include the number of ``correct'' computations, that is for every $i$ and $k_i=k$ the resulting $d_i$ will indeed be in the interval $[a_k,a_{k+1})$. In total there will be exactly $N$ correct computations. However, there will also be steps which will turn out to be false: for a given $k$ the departure time $d_i$ will not be in the interval $[a_k,a_{k+1})$. If $k_j=k$ for some $j$, then for every $i>j$: $k_i\geq k$. Therefore, if for some $i$ and $k\geq i$ the computation will yield $d_i\notin[a_k,a_{k+1})$, then this interval will not be attempted by any later arrival $j>i$ in the following steps. As a result, every interval will yield at most one false computation. Since there are exactly $N$ intervals this completes the proof. 
\qed

\vspace{5mm}

\noindent
{\bf Proof of Lemma~\ref{Lemma_coeff_decreasing}}:
Since $x>a_i$ it holds that $i < \pi_r$, and thus using \eqref{eq:theta_single} if $k_i<\pi_r$ then $\theta_i=0$, and if $k_i\geq \pi_r$ then
\[
\theta_i=-\frac{\alpha\left(1-\sum_{j=h_i}^{i-1}\theta_j\right)}{\beta-\alpha(k_i-i)}.
\]
Since $N< \beta/\alpha+1$, the denominator is always positive. We next show that the numerator is non-negative by induction on $h_{\pi_r}\leq i <\pi_r$. Recall that $h_i=\min\lbrace h:k_h\geq i\rbrace$, and so $k_i\geq \pi_r$ is equivalent to $i\geq h_{\pi_r}$. Thus for $j<h_{\pi_r}$: $\theta_j=0$ and the denominator in the case $i=h_{\pi_r}$ equals $\alpha (1-0)>0$. The induction step is then immediate because the sum $\sum_{j=h_i}^{i-1}\theta_j$ is non-negative for all $h_{\pi_r}<i <\pi_r$. 
\qed
\vspace{5mm}

\noindent
{\bf Proof of Lemma~\ref{Lemma_coeff_changes}}:
Without loss of generality assume that $\underline{a}+\frac{1}{\beta}<a_i<\overline{a}-\frac{1}{\beta}, \ \forall i\in\mathcal{N}$. If this were not the case we could always extend the search range by $\frac{1}{\beta}$ in both directions. Hence, $\theta_i=0$ at $x=\underline{a}$ and at $x=\overline{a}$ for any $i\in\pi$. Furthermore, from Lemma \ref{Lemma_coeff_decreasing} we have that $\theta_i\leq 0$ for $x>a_i$. Clearly, there is some $x$ such that $\theta_i>0$ for the first time. So far we have established that $\theta_i$ starts at zero, is positive at some point and negative at some back to zero, for every $i\in\pi$. We are left with finding the number of possible sign changes prior to $x=a_i$. For any $x<a_i$ it follows that $i>\pi_r$, and from \eqref{eq:theta_single} we have that:
\[
\theta_i=\frac{\alpha\sum_{j=h_i}^{i-1}\theta_j}{\beta-\alpha(k_i-i)}
\]
Note that $\theta_i$ can only be negative when there is at least one $j<i$ such that $\theta_j<0$. We use this to complete the proof by induction on the initial order $\pi$. We start the induction at $i=2$ because $\pi_r=1$ in the initial permutation and $\theta_{\pi_r}\geq 0$ for all values of $x$. For $i=2$ and $x<a_2$: $\theta_2=\frac{\alpha\theta_{1}\mathbbm{1}\{h_2=1\}}{\beta-\alpha(k_2-2)}\geq 0$. Together with Lemma \ref{Lemma_coeff_decreasing} we have established that $\theta_2$ changes sign exactly once. Now let us assume that the claim is correct for all $j\leq i-1$. From \eqref{eq:theta_single} we see that for $x<a_i$, $\theta_j$ can only change sign when one of the previous $j\in\{h_i,\dots ,i-1\}$ changes sign. If $\theta_{i-1}$ changed sign exactly $i-2$ times then $\theta_i$ can potentially change at all these times and additionally when $x=a_i$, and therefore there are indeed at most $i-1$ changes of sign.
\qed

\footnotesize{\bibliography{PaperDatabase,BookDatabase}}

\begin{thebibliography}{10}

\bibitem{ADL1993}
R.~Arnott, A.~de~Palma, and R.~Lindsey.
\newblock A structural model of peak-period congestion: A traffic bottleneck
  with elastic demand.
\newblock {\em American Economic Review}, 83(1):161--79, 1993.

\bibitem{avram1995fluid}
F.~Avram, D.~Bertsimas, and M.~Ricard.
\newblock {Fluid models of sequencing problems in open queueing networks; an
  optimal control approach}.
\newblock {\em Institute for Mathematics and Its Applications}, 71:199, 1995.

\bibitem{baker1990sequencing}
K.~Baker and G.~D. Scudder.
\newblock Sequencing with earliness and tardiness penalties: a review.
\newblock {\em Operations Research}, 38(1):22--36, 1990.

\bibitem{Bertsekas1999}
D.~P. Bertsekas.
\newblock {\em {Nonlinear Programming}}.
\newblock Athena Scientific, 2nd edition, Sept. 1999.

\bibitem{cohen1979multiple}
J.~Cohen.
\newblock The multiple phase service network with generalized processor
  sharing.
\newblock {\em Acta Informatica}, 12(3):245--284, 1979.

\bibitem{Daganzo2007}
C.~F. Daganzo.
\newblock Urban gridlock: Macroscopic modeling and mitigation approaches.
\newblock {\em Transportation Research Part B: Methodological}, 41(1):49--62,
  2007.

\bibitem{LHK2006}
J.~A. De~Loera, R.~Hemmecke, M.~Köppe, and R.~Weismantel.
\newblock Integer polynomial optimization in fixed dimension.
\newblock {\em Mathematics of Operations Research}, 31(1):147--153, 2006.

\bibitem{GH1983}
A.~Glazer and R.~Hassin.
\newblock {?/M/1: On the equilibrium distribution of customer arrivals}.
\newblock {\em European Journal of Operational Research}, 13(2):146 -- 150,
  1983.

\bibitem{harchol2013performance}
M.~Harchol-Balter.
\newblock {\em Performance Modeling and Design of Computer Systems: Queueing
  Theory in Action}.
\newblock Cambridge University Press, 2013.

\bibitem{book_H2016}
R.~Hassin.
\newblock {\em Rational queueing}.
\newblock CRC Press, 2016.

\bibitem{H1974}
J.~Henderson.
\newblock Road congestion.
\newblock {\em Journal of Urban Economics}, 1(3):346 -- 365, 1974.

\bibitem{KFM2006}
A.~B. Keha, I.~R. de~Farias, and G.~L. Nemhauser.
\newblock A branch-and-cut algorithm without binary variables for nonconvex
  piecewise linear optimization.
\newblock {\em Operations Research}, 54(5):847--858, 2006.

\bibitem{koshy2009catalan}
T.~Koshy.
\newblock {\em Catalan numbers with applications}, volume~10.
\newblock 2009.

\bibitem{MAHMASSANI1984}
H.~Mahmassani and R.~Herman.
\newblock Dynamic user equilibrium departure time and route choice on idealized
  traffic arterials.
\newblock {\em Transportation Science}, 18(4):pp. 362--384, 1984.

\bibitem{MK1987}
K.~G. Murty and S.~N. Kabadi.
\newblock {Some NP-complete problems in quadratic and nonlinear programming}.
\newblock {\em Mathematical Programming}, 39(2):117--129, 1987.

\bibitem{nazarathy2009near}
Y.~Nazarathy and G.~Weiss.
\newblock Near optimal control of queueing networks over a finite time horizon.
\newblock {\em Annals of Operations Research}, 170(1):233--249, 2009.

\bibitem{PDM2016}
A.~D. Pia, S.~S. Dey, and M.~Molinaro.
\newblock "mixed-integer quadratic programming is in np".
\newblock {\em Mathematical Programming}, pages 1--16, 2016.

\bibitem{book_P2008}
M.~L. Pinedo.
\newblock {\em Scheduling: Theory, Algorithms, and Systems}.
\newblock Springer, 2008.

\bibitem{PM2000}
C.~N. Potts and M.~Y. Kovalyov.
\newblock Scheduling with batching: A review.
\newblock {\em European Journal of Operational Research}, 120(2):228 -- 249,
  2000.

\bibitem{RHV2016}
L.~Ravner, M.~Haviv, and H.~L. Vu.
\newblock A strategic timing of arrivals to a linear slowdown processor sharing
  system.
\newblock {\em European Journal of Operational Research}, 255(2):496 -- 504,
  2016.

\bibitem{sen1984state}
T.~Sen and S.~K. Gupta.
\newblock A state-of-art survey of static scheduling research involving due
  dates.
\newblock {\em Omega}, 12(1):63--76, 1984.

\bibitem{Tseng2001}
P.~Tseng.
\newblock Convergence of a block coordinate descent method for
  nondifferentiable minimization.
\newblock {\em Journal of Optimization Theory and Applications},
  109(3):475--494, 2001.

\bibitem{VAN2010}
J.~P. Vielma, S.~Ahmed, and G.~L. Nemhauser.
\newblock Mixed-integer models for nonseparable piecewise-linear optimization:
  Unifying framework and extensions.
\newblock {\em Operations Research}, 58(2):303--315, 2010.

\bibitem{weiss2008simplex}
G.~Weiss.
\newblock A simplex based algorithm to solve separated continuous linear
  programs.
\newblock {\em Mathematical Programming}, 115:151--198, 2008.

\end{thebibliography}
\end{document}